\documentclass[12pt]{amsart}

\usepackage{MyStyle}

\begin{document}

\title{Spectrum of singularities, analytic torsion and Calabi--Yau degenerations}
\author{Dennis Eriksson}
\author{Gerard Freixas i Montplet}

\address{Dennis Eriksson \\ Department of Mathematics \\ Chalmers University of Technology and  University of Gothenburg}
\email{dener@chalmers.se}

\address{Gerard Freixas i Montplet \\ CNRS -- Centre de Math\'ematiques Laurent Schwartz - \'Ecole Polytechnique - Institut Polytechnique de Paris}
\email{gerard.freixas@polytechnique.edu}

\subjclass[2020]{Primary: 32S25, 32Q25. Secondary: 32S35, 32S30, 58J52.}

\keywords{Isolated hypersurface singularities, Spectrum of singularities, Milnor numbers, Analytic torsion, Calabi--Yau varieties}
\begin{abstract}
    We relate the spectrum of singularities to the geometry of degenerating Calabi--Yau manifolds through holomorphic analytic torsion. We express the leading asymptotic coefficients of the analytic torsion of holomorphic differential forms in terms of Hodge-theoretic invariants of vanishing cycles. For isolated hypersurface singularities, these formulas lead to the formulation of new higher Durfee--Saito conjectures bounding Hodge-filtration dimensions by the Milnor number and Eulerian numbers. We also prove that, for isolated quasi-homogeneous singularities, the spectral measure is dominated in convex order by the corresponding Irwin--Hall distribution. Applying the torsion formulas to the BCOV invariant yields a computable obstruction to birational smooth fillings. For isolated singularities, its local contribution is expressed in terms of Hertling's spectral variance and a Bernoulli sum determined by monodromy. As applications, we rule out birational smooth fillings for projective Calabi--Yau degenerations with smooth total space and singular central fiber having only ADE or terminal Brieskorn--Pham singularities, in relative dimension at least three, extending results of Voisin.
\end{abstract}

\maketitle

\setcounter{tocdepth}{2}
\tableofcontents

\section{Introduction}

\subsection{Friedman's problem and the BCOV obstruction}
\label{subsection:leitmotif}

A classical problem in algebraic geometry is to determine when a projective degeneration admits a smooth filling. For abelian varieties, the Néron--Ogg--Shafarevich criterion gives a positive answer in terms of the finiteness of monodromy, a result which has been extended to hyperkähler varieties \cite{KLSV-HK}. Friedman emphasized this same problem for other degenerations \cite{Friedmansimthreefolddouble,Friedman:degenerating-family}. For general Calabi--Yau varieties \footnote{By this we mean a compact K\"ahler manifold with trivial canonical bundle.}, finite monodromy does not suffice, as shown by several examples \cite{CynkStraten,NicaiseLunardon,Voisin:fillings,wang}. Even for Lefschetz pencils, the problem leads to subtle Hodge-theoretic questions, as demonstrated in \cite{Voisin:fillings}. Existing results in the Calabi--Yau setting are nevertheless largely case-specific.

\subsubsection{} The first aim of this paper is to provide a systematic numerical obstruction to birational smooth fillings of Calabi--Yau degenerations. The obstruction is extracted from the BCOV invariant, originating in mirror symmetry in theoretical physics \cite{bcov}. It was constructed mathematically by Fang, Lu and Yoshikawa \cite{FLY} for Calabi--Yau threefolds and, in arbitrary dimension, in our work with Mourougane \cite{cdg2}.

\subsubsection{} To define our invariant, let $f\colon X\longrightarrow \DBbb$ be a projective degeneration of Calabi--Yau manifolds, smooth over the punctured disc. For the asymptotic statements with non-isolated singularities, we assume that the family is the restriction of a morphism over an algebraic curve. In \cite{cdg2} we showed that, allowing the central fiber $X_0$ to be singular, the BCOV invariant satisfies
\begin{equation}\label{eq:kappa-intro}
\log \tau_{\mathrm{BCOV}}(X_t)
=
\kappa_f\cdot\log |t|^2+o(\log |t|),
\end{equation}
as $t\to 0$, for a rational number $\kappa_f$ determined by $f_{\mid\DBbb^{\times}}$.

\subsubsection{} The smooth filling problem asks whether $f_{\mid \DBbb^{\times}}$ extends smoothly across the origin. Non-trivial monodromy, even finite, rules out such a filling. We hence allow a further finite base change to eliminate finite monodromy obstructions. In the case of a smooth filling with $X_0$ not necessarily projective but Kähler, we prove in Section \ref{sec:obstructionsmoothfillings} that the left hand side of \eqref{eq:kappa-intro} extends continuously across the origin, and hence that $\kappa_f=0$.  More strongly, $\kappa_f\neq 0$ obstructs not only smooth fillings in Friedman's sense, but even birational smooth fillings, by the birational invariance of $\tau_{\mathrm{BCOV}}$ proven in \cite{ZhangFu-birationalBCOV}.

\subsubsection{}
Our approach therefore leads to a numerical problem: compute $\kappa_f$ and determine whether it vanishes. Previous formulas for $\kappa_f$ were restricted to special geometries or normal-crossings models \cite{cdg2,FLY}, making them difficult to compute. The same question is also important in the topological-string literature. Already in the simplest case of the conifold, or $A_1$-singularity, the corresponding expression for $\kappa_f$ provides a universal boundary condition \cite{HKTYci,HKQ}, independent of the global geometry. Combined with the holomorphic anomaly equation from \cite{cdg2}, this is a key input in genus-one mirror symmetry \cite{cdg3}. Our first result provides, in broad generality, an intrinsic formula for $\kappa_f$ in terms of local singularity invariants.

\begin{theorem-intro}\label{thm-ADE-intro} 
Let $f\colon X\to\DBbb$ be a projective degeneration of Calabi--Yau manifolds of dimension $n\geq 3$. Assume that $X$ is smooth.

\begin{enumerate}
    \item There is a natural decomposition as a sum of explicit local and global contributions:
$$\kappa_f = \kappa_f^{\mathrm{loc}} + \kappa_f^{\mathrm{gl}}.$$ 
    \item The contribution $\kappa_f^{\mathrm{loc}}$ is a sum of contributions of the singularities of $X_0$, each given in terms of the Hodge theory of the vanishing cohomology. 
    \item $\kappa_f^{\mathrm{gl}}=0$ if $X_0$ has canonical singularities.
\end{enumerate}

\end{theorem-intro}
We refer to Theorem \ref{thm:kappaBCOVgeneral} below for the precise formulation and the description of the local and global invariants.

\subsubsection{} 
When the singularities are isolated, the local contribution of a singular point to $\kappa_f^{\mathrm{loc}}$ hence depends only on the corresponding germ

\begin{displaymath}
f:(\CBbb^{n+1},0)\longrightarrow(\CBbb,0).
\end{displaymath}
Our formula for $\kappa_f^{\mathrm{loc}}$ is actually defined for any such germ, whether or not it arises from a Calabi--Yau degeneration. It can be expressed in terms of invariants derived from the Varchenko--Steenbrink spectrum of $f$, or equivalently from its spectral numbers; see \cite{Varchenko-asymptotic, Steenbrink-mixedonvanishing}. We now introduce these invariants.

We let \begin{displaymath}
    \lambda_{1},\ldots,\lambda_{\mu}\in (0,n+1)
\end{displaymath}
denote the spectral numbers of $f$, where $\mu$ is the Milnor number, and we let $V_{f}$ be Hertling's variance of the spectrum \cite{Hertling}. We also introduce a new invariant $\beta_{f}$ defined by
\begin{equation}\label{eq:def-beta-intro}
    \beta_{f}
    =
    \frac{1}{\mu}
    \sum_{j=1}^{\mu}
    \widetilde{B}_2(\lambda_j).
\end{equation}
Here $\widetilde{B}_2(x) = B_2(\{x\})$ is the 1-periodized version of the second Bernoulli polynomial $B_2(x)=x^2-x+\frac{1}{6}$. Note that $\beta_f$ only depends on the eigenvalues of the monodromy on the Milnor fiber, namely $e^{-2\pi i\lambda_j}$, and is hence a topological invariant.  %The terminology comes from the identity $B_2(\{x\}) = \pi^{-2}\operatorname{Re} \operatorname{Li}_2(e^{2\pi i x})$ and the fact that the numbers $e^{-2\pi i\lambda}$ are the eigenvalues of the monodromy on the cohomology of the Milnor fiber.

\begin{theorem-intro}\label{thm:intro-isolatedkappa_fformula} Let $f: (\CBbb^{n+1},0)\to (\CBbb, 0)$ define the germ of an isolated hypersurface singularity at the origin. Then the following formula holds:   
\begin{equation}\label{eq:kappafloc-intro}
(-1)^{n+1}\frac{\kappa^{\mathrm{loc}}_{f}}{\mu}
    =
    \frac{n-1}{24}
    -\frac12 V_{f}
    +\frac12\beta_{f}.
\end{equation}
\end{theorem-intro}

The above can be seen as an explicit form of Theorem \ref{thm-ADE-intro} for isolated singularities. We refer to Theorem \ref{thm:kappa-f-int-phi} for a proof.

\subsubsection{} Formula \eqref{eq:kappafloc-intro} provides a concrete way to investigate the sign, and hence possibly non-vanishing, of $\kappa_f^{\mathrm{loc}}$. In Section \ref{sec:obstructions}, we carry out a detailed study in several special geometries and uncover a striking positivity phenomenon. Among our results, we single out the following statement.

\begin{theorem-intro}\label{thm:introthmBP}
Let
$f\colon(\CBbb^{n+1},0)\longrightarrow(\CBbb,0)$
define an isolated hypersurface singularity, with $n\geq 3$. Assume that either
\begin{enumerate}
\item  $f$ has minimal spectral number $\lambda_{\min}>3/2$ and satisfies the Hertling variance conjecture (as holds, for example, when $f$ is quasi-homogeneous), or
\item $f=z_0^{a_0}+z_1^{a_1}+\cdots+z_n^{a_n}$, i.e. is a Brieskorn--Pham singularity, which is moreover terminal.
\end{enumerate}
Then
$$(-1)^{n+1}\kappa^{\mathrm{loc}}_f>0.$$
\end{theorem-intro}
In the first case, the condition $\lambda_{\min}>3/2$ implies terminality. We refer to \cite{Hertling} and also \eqref{eq:Hertlinginequality} for the formulation of the Hertling variance conjecture. For Brieskorn--Pham singularities, extending the result to the whole terminal range requires a delicate arithmetic estimate for $\beta_f$, suggesting that $\beta_f$ is an invariant of independent interest.

\subsubsection{} Since terminal singularities are canonical, $\kappa_f$ is purely local for those. As an immediate consequence of Theorem \ref{thm:introthmBP}, we then obtain the following application to the smooth filling problem.
\begin{corollary-intro}
   Let $f\colon X\to \DBbb$ be a projective degeneration of Calabi--Yau manifolds of dimension $n \geq 3$. Assume that $X$ is smooth and $X_0$ is singular and has only terminal singularities of the same form as in Theorem \ref{thm:introthmBP}. Then $f$ does not admit a birational smooth filling. In particular, if $X_0$ has ADE singularities, then there are no birational smooth fillings.
\end{corollary-intro}
We refer to Theorem \ref{thm:conjectureAcases} and Theorem \ref{thm:localkappaBP} for this result, along with other examples. This corollary generalizes the results of Voisin and of Lunardon and Nicaise for $A_1$-singularities \cite{Voisin:fillings, NicaiseLunardon}. It also shows that Hertling's conjecture has concrete geometric consequences, providing, to the best of our knowledge, the first application of this kind. Explicit computations of $\kappa_f^{\mathrm{loc}}$ for ADE singularities are given in Section~\ref{subsec:ADEsingulcomp}, which also includes remarks on when the monodromy is finite.

\subsubsection{} Theorem \ref{thm:introthmBP}, together with further numerical evidence, suggests that the positivity of \eqref{eq:kappafloc-intro} should be governed not by the particular form of the singularity, but by terminality. This leads to the following conjecture for degenerations of Calabi--Yau manifolds. A local counterpart of this conjecture, Conjecture \hyperref[conj:localkappa]{$\mathrm{A}^{\prime}$}, is formulated in Section~\ref{conj:localkappa}.

\begin{conjectureintro}
\label{conjectureintro-kappa-positive}
Let $f\colon X\to\DBbb$ be a projective degeneration of Calabi--Yau manifolds of relative dimension $n\geq 3$. Assume that $X$ is smooth and that $X_0$ is singular and has only isolated terminal singularities. Then
$$ (-1)^{n+1}\kappa_f>0.$$ Consequently, the degeneration admits no birational smooth filling.
\end{conjectureintro}

Several features of the conjecture are worth emphasizing. In dimension three, its conclusion is compatible with a theorem of C.-L.~Wang \cite{wang}, which applies under the weaker assumption that the central fiber is Gorenstein with nef canonical bundle, but does not allow a change of birational model. Moreover, in Conjecture~\ref{conjectureintro-kappa-positive} no polarization is fixed. In the polarized setting, non-existence results for smooth fillings can instead be obtained by methods from the minimal model program \cite{Boucksom:minimal-models-degenerations,Zhang-Yuguang-Survey}.

\subsubsection{} In the known explicit constructions of families admitting no smooth filling, we verify that the invariant $\kappa_f$ is nonzero. Hence  they also do not admit a birational smooth filling. In Proposition \ref {prop:cynkstratenekappa} we treat the example of Cynk and van Straten \cite{CynkStraten}, for which $X_0$ has non-isolated singularities, and in Proposition \ref{prop:tau-bcov-BV} we also treat a new example of Borcea--Voisin type. 

\subsubsection{} Even in the polarized case, these positivity phenomena might have implications for the geometry of moduli of polarized Calabi--Yau manifolds. These, together with the holomorphic anomaly equation from \cite{cdg2}, show that the BCOV invariant provides a quasi-psh exhaustion function on some moduli spaces of Calabi--Yau varieties.

\subsection{Global-to-local mechanism via analytic torsion}

The BCOV invariant is built out of the analytic torsion of holomorphic $p$-differentials. The asymptotics of the BCOV invariant can be deduced from the asymptotics of those analytic torsions. This is a topic of independent interest. Several results hold for non-isolated singularities, but for simplicity and the subsequent applications, we restrict ourselves to the isolated case. 

\subsubsection{}\label{subsubsec:intro-torsion-anal-OX} Let $f\colon X\to\DBbb$ be a projective morphism of complex manifolds, with isolated singularities in the special fiber. We assume that $X$ is endowed with a K\"ahler metric. In our previous article \cite{DURF1}, we obtained the asymptotic behavior of the analytic torsion of the structure sheaf, in the following form:

 \begin{equation}\label{introeq:log-behaviortorsionp=0}
  (-1)^{n}\log \tau(X_t, \Ocal_{X_{t}})=\left(  \frac{\mu}{(n+2)!}-\widetilde{p}_g \right) \log |t|^2 + o(\log|t|).
\end{equation}
The leading term contains a secondary-type version of the geometric genus $p_g$ of the singularities. 

In this article, we generalize this result to more general analytic torsions of holomorphic $p$-forms.  To accomplish this, we complete the work by Yoshikawa \cite{yoshikawa2} and Fang, Lu and Yoshikawa \cite{FLY}, and we apply our previous work \cite{cdg2} and the theory of mixed Hodge modules.

\subsubsection{} 
For the statement, we first define: 
\begin{displaymath}
    \Upsilon(m,p)=\#\{\text{ permutations of } \{1, \ldots, m\} \text{  with $\leq$ } p \text{ descents }\}.
\end{displaymath}
These can be expressed as sums of consecutive Eulerian numbers, see \eqref{eq:Yoshikawa-coefficient}. Also recall the definition of the Steenbrink numbers, built from the mixed Hodge structure of the cohomology of the Milnor fiber $M_f$:
\begin{displaymath}
    s_p =\dim \Gr_F^p H^n(M_f).
\end{displaymath} 
Here, the Milnor fiber is understood as the disjoint union of Milnor fibers of all the individual singularities. 

\subsubsection{} We introduce a secondary version of the Steenbrink numbers, defined from the eigenvalues of the semi-simple part of the monodromy: 
\begin{displaymath}
 \widetilde{s}_p= \frac{1}{2\pi i} \tr\left(\log T_{s} \mid \Gr_F^p H^n(M_f)\right).
\end{displaymath}
Here $\log$ denotes the upper branch of the logarithm whose imaginary part lies in $2\pi [0,1)$. It follows from the definition that $\widetilde{p}_g = \widetilde{s}_n$. These secondary invariants $\widetilde{s}_{p}$ can equivalently be written as sums of spectral numbers, appropriately translated to take values between 0 and 1.

\subsubsection{} 
With this in mind, the following result, proven in Theorem \ref{thm:analytictorsionasymptotics},  generalizes \eqref{introeq:log-behaviortorsionp=0}.

\begin{theorem-intro}\label{thm:asymp-torsion-intro}
With the notation and assumptions as above, we have 
\begin{equation}\label{eq:logtermdescriptioninto}
       (-1)^{n-p}\log \tau(X_t, \Omega^{p}_{X_{t}})=\left(\frac{\Upsilon(n+2,p)}{(n+2)!}\mu-\widetilde{s}_{n-p}-\sum_{k=0}^{p-1}s_{n-k}\right) \log|t|^2+o(\log|t|),
\end{equation}  
as $t\to 0$. 
\end{theorem-intro}

By the construction of the BCOV invariant, $\kappa_f^{\mathrm{loc}}$ is essentially a weighted sum of the logarithmic coefficients appearing in \eqref{eq:logtermdescriptioninto} as $p$ varies. As a matter of fact, we show in \textsection \ref{subsec:isolasymp} that $\kappa_f^{\mathrm{loc}}$ is given as
\begin{equation}\label{eq:kappa_f-formulaintro}
    (-1)^{n+1}\kappa_f^{\mathrm{loc}}=\sum_{2p<n}(n-2p)\left(\frac{\Upsilon(n+2,p)}{(n+2)!}\mu-\widetilde{s}_{n-p}-\sum_{k=0}^{p-1}s_{n-k}\right)-\frac{\mu}{12},
\end{equation}
A reorganization of this sum then yields the formula in Theorem \ref{thm:intro-isolatedkappa_fformula}.

\subsection{Higher Durfee--Saito conjectures}

In our study of the singularities of the analytic torsion in \cite{DURF1}, we were led to conjecture the positivity of the logarithmic coefficient in \eqref{introeq:log-behaviortorsionp=0}. In analogy with the classical Durfee conjecture, we called this the secondary Durfee conjecture. It could be more properly referred to as the secondary Durfee--Saito conjecture, after K. Saito's formulation in higher dimensions of the Durfee conjecture, originally restricted to surfaces. In view of \eqref{eq:logtermdescriptioninto}, one may ask whether our conjecture admits an analogous extension. We will refer to this as the secondary higher Durfee--Saito conjectures.

\subsubsection{} For a germ  $f: (\CBbb^{n+1},0) \to (\CBbb, 0)$ of an isolated hypersurface singularity, our conjecture is stated as follows. \begin{conjectureintro-btilde}[Secondary higher Durfee--Saito conjectures]\label{conjecture:B-tilde}
For $n\geq 1$ and 
\begin{displaymath}
    0\leq p<\frac{n}{2},
\end{displaymath}
one has
\begin{displaymath} 
    \widetilde{s}_{n-p}+\sum_{k=0}^{p-1}s_{n-k} < \frac{\Upsilon(n+2,p)}{(n+2)!}\mu.
\end{displaymath}
\end{conjectureintro-btilde}

The conjecture has been verified numerically in many non-trivial cases. In \textsection\ref{subsec:higher-DS-conj-susp} we establish the stability of the conjecture under double square suspensions, and we use this fact to derive some basic cases from our previous work \cite{DURF1}.

\subsubsection{}\label{subsub:doublesquaresuspensionintro} Conjecture \hyperref[conjecture:B-tilde]{$\widetilde{\mathrm{B}}$} also has direct implications for Conjecture \hyperref[conj:localkappa]{$\mathrm{A}^{\prime}$}. If one supposes that $f$ can be written as a double square suspension $g+u^2+v^2$, a special type of terminal singularity, and if $g$ satisfies Conjecture \hyperref[conjecture:B-tilde]{$\widetilde{\mathrm{B}}$}, in Proposition \ref{prop:doubly-suspended-kappa} we show this provides a special case of Conjecture \hyperref[conj:localkappa]{$\mathrm{A}^{\prime}$}, namely:
\begin{displaymath}(-1)^{n+1}\kappa_f^{\mathrm{loc}} > 0.
\end{displaymath}
In support of Conjecture \hyperref[conjecture:B-tilde]{$\widetilde{\mathrm{B}}$}, we prove in Proposition \ref{prop:doubly-suspended-kappa} this same conclusion unconditionally if $g$ is supposed to be quasi-homogeneous. This relies on a measure-theoretic interpretation of $\kappa_{f}^{\mathrm{loc}}$ discussed in \textsection \ref{subsub:kappafmeasureexpression} below. 

\subsubsection{} The parallel between the Durfee and secondary higher Durfee--Saito conjectures raises the question of whether Conjecture \hyperref[conjecture:B-tilde]{$\widetilde{\mathrm{B}}$} likewise admits a non-secondary counterpart. In \cite{DURF1}, we explained how a suspension argument leads from the original Durfee conjecture to its secondary form. Running this argument in reverse suggests a natural non-secondary version of Conjecture \hyperref[conjecture:B-tilde]{$\widetilde{\mathrm{B}}$}. This can be stated as follows, with the classical case corresponding to $p=0$.

\begin{conjectureintro}[Higher Durfee--Saito conjectures]\label{conj:higherdurfee}
 For $n\geq 2$ and 
\begin{displaymath}
    0\leq p<\left\lfloor\frac{n}{2}\right\rfloor,
\end{displaymath}
one has
\begin{equation}\label{eq:higherDurfeeSaitoconjecture}
    \sum_{k=0}^{p}s_{n-k}
    <
    \frac{\Upsilon(n+1,p)}{(n+1)!}\,\mu.
\end{equation}
\end{conjectureintro}

The same type of verifications that can be done for the secondary version have been done for Conjecture \ref{conj:higherdurfee}. We plan to return to this problem in future work.

\subsection{Spectral inequalities and the distribution of the spectrum}\label{subsec:spectral-measures-intro}

The preceding discussion warrants a more detailed study of the distribution of the spectrum itself. In fact, K. Saito already introduced the uniform discrete probability measure over the spectral numbers, and proposed a comparison program with a continuous model, given by the Irwin--Hall measure, see in particular \cite[(2.5)--(2.8)]{KSaito:distribution}. We can phrase our conjectures in this measure-theoretic framework, developing K. Saito's program and uncovering additional phenomena. This also clarifies a more fundamental measure-theoretic nature of our  conjectures.

\subsubsection{} Following \cite{KSaito:distribution}, we define the spectral measure of an isolated hypersurface singularity $f$ as
\begin{displaymath}
    \nu_f= \frac{1}{\mu}\sum_{i=1 }^\mu \delta_{\lambda_i},
\end{displaymath}
where the sum runs over the spectral numbers of $f$. We also recall that the Irwin--Hall measure $\nu_{\IH_{n}}$ is the probability measure $N(t)dt$ defined by
\begin{equation}\label{def:IH-measure-intro}
  N(t)=\frac{d}{dt}\vol\left((x_0, \ldots, x_n) \in [0,1]^{n+1}\mid \sum x_i \leq t\right).
\end{equation}
By definition, this is the convolution of $n+1$ uniform probability measures on $[0,1]$.

\subsubsection{} The relationship with Conjecture \ref{conj:higherdurfee} is immediate but revealing. One has

$$
\frac{1}{\mu}\sum_{k=0}^{p}s_{n-k}
=
\nu_f([0,p+1]),
$$
whereas a classical relation between the Irwin--Hall distribution and Eulerian numbers gives

$$
\nu_{\IH_{n}}([0,p+1])
=
\frac{\Upsilon(n+1,p)}{(n+1)!}.
$$
Consequently, Conjecture \ref{conj:higherdurfee} is equivalent to
\begin{equation}\label{eq:conj-B-measure}
\nu_f([0,p+1])
<
\nu_{\IH_{n}}([0,p+1]),
\qquad
0\leq p<\left\lfloor\frac{n}{2}\right\rfloor.
\end{equation}
In the language of \cite{KSaito:distribution} this means that any such $p+1$ is a so-called dominant value.
\subsubsection{} The secondary higher Durfee--Saito conjectures also fit in the same picture. If

\begin{equation}\label{eq:function-gp-intro}
g_p(t)
=
(p+1-t)_+-(p-t)_+,
\end{equation}
then Conjecture \hyperref[conjecture:B-tilde]{$\widetilde{\mathrm{B}}$} is equivalent to

\begin{equation}\label{eq:conj-B-tilde-measure}
\int_{\mathbb{R}}g_p\,d\nu_f
<
\int_{\mathbb{R}}g_p\,d\nu_{\IH_{n}},
\qquad
0\leq p<\frac{n}{2}.
\end{equation}
This type of dominance was not envisioned in K. Saito's work.

\subsubsection{} These observations together suggest it would be interesting to investigate to what extent the measures themselves satisfy such inequalities. In this work, we provide a partial answer in terms of the convex order relationship. 

Recall that for two measures $\mu$ and $\nu$ on $\RBbb$, we say that $\mu$ is dominated in convex order by $\nu$, or $\mu \preceq_{\mathrm{cx}} \nu$, if for every convex function
$\varphi\colon\mathbb{R}\longrightarrow\mathbb{R},$ one has
$$
\int_{\mathbb{R}}\varphi\,d\mu
\leq
\int_{\mathbb{R}}\varphi\,d\nu,
$$
whenever the integrals are finite. 
\subsubsection{}
Our first result concerning convex orders is the following theorem, proven in Proposition \ref{prop:convex-order-QH}.  
\begin{theorem-intro}\label{thm:convex-dom-intro}
Let $f$ be a quasi-homogeneous isolated hypersurface singularity of dimension $n\geq 0$. Then

$$
\nu_f\preceq_{\mathrm{cx}}\nu_{\IH_{n}}.
$$

\end{theorem-intro}

We note that whenever $p=0$, the function $g_0$ in \eqref{eq:function-gp-intro} is convex on the support of the measures and hence this implies, together with a simple additional argument on the strictness of the inequality, that the secondary Durfee--Saito conjecture is true for these singularities; see Corollary \ref{cor:Durfee-QH-alternative}. This was already obtained in \cite{DURF1} as an application of the work of Stephen T. Yau and L. Zhang \cite{Yau-Zhang}. The measure-theoretic approach reveals additional layers to this type of problem. We note, however, that the functions $g_p$ for the other values of $p$ are not convex, and hence Theorem \ref{thm:convex-dom-intro} does not imply the higher Durfee--Saito conjectures. 

\subsubsection{}
With this in mind, it seems natural to conjecture that the comparison in Theorem \ref{thm:convex-dom-intro} holds in general: 
\begin{conjectureintro}
    Let $f: (\CBbb^{n+1},0) \to (\CBbb,0)$ define an isolated hypersurface singularity, with $n \geq 0$. Then 
$$        \nu_f\preceq_{\mathrm{cx}}\nu_{\IH_{n}}.
$$
\end{conjectureintro}
By the previous discussion, this conjecture would in particular imply the secondary Durfee--Saito conjecture, with a non-strict inequality. With regard to a structural result, we also remark that the conjecture is stable under Thom--Sebastiani sums of singularities. 
\subsubsection{}\label{subsub:kappafmeasureexpression} It is instructive to reformulate the expression in \eqref{eq:kappa_f-formulaintro} in terms of integrals of the functions \eqref{eq:function-gp-intro}. The particular expression simplifies considerably, and one finds a formula
\begin{displaymath}
    (-1)^{n+1}\frac{\kappa_f^{\mathrm{loc}}}{\mu}=\int \phi_n \left(d \nu_{\IH_{n}}-d\nu_f\right)-\frac{1}{12},
\end{displaymath}
for a piecewise linear convex function $\phi_n$. This type of expression, combined with the convex dominance, is what is behind the unconditional positivity results for $\kappa_f^{\mathrm{loc}}$ in \textsection \ref{subsub:doublesquaresuspensionintro}. In this way, the measure-theoretic constraints on the spectrum lead back directly to the smooth-filling problem with which we began.
 
\section{Hodge theory invariants of singularities}\label{section:Hodgeinvariants}

\subsection{Vanishing cycles}

In this subsection, we briefly recall some facts on vanishing cycles and their mixed Hodge structure. For an introduction to these topics, we refer the reader to \cite[Chapter 4]{Dimca:sheaves-in-topology} and \cite[Chapters 13 \& 14]{Peters-Steenbrink}. Our discussion is based on results of Varchenko \cite{Varchenko-asymptotic}, Steenbrink \cite{Steenbrink-limits, Steenbrink-mixedonvanishing}, Scherk--Steenbrink \cite{Scherk-Steenbrink}, M. Saito  \cite{MSaito:MHP, MSaito:MHM} and Navarro-Aznar \cite{Navarro}.

\subsubsection{} Let $f\colon X \to \DBbb$ be a flat projective morphism of complex manifolds of relative dimension $n$ over the unit disc $\DBbb$. We suppose that $f$ is a submersion over the punctured disc $\DBbb^{\times}$. Associated to $f$ and the locally constant sheaf $\underline{\CBbb}$ on $X$, there is a complex of vanishing cycles $\varphi_{f}(\underline{\CBbb})$ on $X_{0}=f^{-1}(0)$. For simplicity, we will just write $\varphi_f$. This is an object of the derived category of bounded constructible complexes of sheaves on $X_{0}$, and it is supported on the critical locus of $f$. The cohomology of the stalk of $\varphi_{f}$ at $x\in X_0$ is isomorphic to the reduced cohomology of the local Milnor fiber at $x$, denoted by $M_{f, x}$. By M. Saito's theory, there is a structure of mixed Hodge complex on $\varphi_{f}$. Below, we use upper indices for Hodge filtrations, related to the lower indexing in M. Saito's theory by $F^{p}=F_{-p}$. 

\subsubsection{}  The (hyper)cohomology of $\varphi_{f}$ is referred to as the vanishing cohomology, and it carries an induced mixed Hodge structure. It fits into a long exact sequence of mixed Hodge structures
\begin{equation}\label{eq:mixedlongexactvanishingcycles}
    \cdots\to H^{k}(X_{0})\to H^{k}(X_{\infty})\to H^{k}(X_{0},\varphi_{f})\to H^{k+1}(X_{0})\to\cdots.
\end{equation}
Here, $H^{k}(X_{0})$ is endowed with Deligne's mixed Hodge structure on the singular variety $(X_{0})_{\mathrm{red}}$ and $H^{k}(X_{\infty})$ is endowed with Schmid's limit mixed Hodge structure. 

\subsubsection{} In the particular case that $X_0$ has isolated singularities, the cohomology of the vanishing cycles is the direct sum of the reduced cohomologies of the local Milnor fibers at the singular points. Since $\varphi_{f}$ is supported on the singular points, up to quasi-isomorphism, the complex $\varphi_{f}$ is determined by the germ of $f$ at those points. By a globalization argument due to Brieskorn \cite[Section 1.1]{Brieskorn}, this allows us to extend the construction of the complex of vanishing cycles and its mixed Hodge structure to germs of holomorphic functions $(\CBbb^{n+1},0)\to (\CBbb,0)$ defining an isolated singularity at the origin. If we denote by $M$ the corresponding Milnor fiber, we hence obtain a mixed Hodge structure on $H^{n}(M)$.

\subsection{Spectral numbers}\label{subsec:spectral-numbers}
 
In the above setting, the semi-simple part of the monodromy $T_{s}$ acts on the exact sequence of mixed Hodge structures \eqref{eq:mixedlongexactvanishingcycles} with finite order. In particular, for any such cohomology group $H$, we can decompose, as mixed Hodge structures, 
\begin{equation}\label{eq:decomposition-cohomology}
    H=H_{=1}\oplus H_{\neq 1}
\end{equation}
according to the action of $T_s$. The action of $T_s$ and the decomposition \eqref{eq:decomposition-cohomology} can actually be lifted to the level of the mixed Hodge complexes underlying \eqref{eq:mixedlongexactvanishingcycles}, and in particular to $\varphi_{f}$. 

\subsubsection{}\label{subsub:spectralnumber} Let now $\exp(-2 \pi i \lambda)$ be an eigenvalue of $T_s$ acting on $\Gr_{F}^p H^{k}(X_{0}, \varphi_{f})$. The rational number $\lambda$ is fixed by the normalization 
\begin{displaymath}
    k-p<\lambda\leq k-p+1,
\end{displaymath}
and we refer to the set of all such $\lambda$, repeated according to multiplicity, as the spectral numbers of the degeneration on $H^k$. Setting $k=p+q$, we have

    \begin{equation}\label{eq:dim-Hpq-spectral}
       h^{p,q}_{F}:=\dim\Gr_F^{p} H^{k}(X_{0}, \varphi_{f})=\#\{\lambda \hbox{ spectral number on } H^{k},\ k-p < \lambda \leq k-p+1 \}.
    \end{equation}

Following \cite{cdg2}, we introduce the following spectral refinements of the dimensions of the graded Hodge pieces in \eqref{eq:dim-Hpq-spectral}:
\begin{equation}\label{eq:def-alphapq-lower}
    \begin{split}
        \alpha^{p,q}=&-\frac{1}{2\pi i}\tr({^\ell}\log T_s \mid \Gr_F^{p} H^{k}(X_{\infty}))\\
                    &=-\frac{1}{2\pi i}\tr({^\ell}\log T_s \mid \Gr_F^{p} H^{k}(X_{0}, \varphi_{f})).
    \end{split}
\end{equation}
Here, ${^\ell}\log$ is the lower branch of the logarithm, having imaginary part in $2\pi (-1,0]$. The second equality follows from the $T_s$-equivariance of the exact sequence \eqref{eq:mixedlongexactvanishingcycles}, and the fact that the monodromy is trivial on the cohomology of the special fiber, hence
\begin{equation}\label{eq:comparevanishinggeneral}
    \Gr_F^{p} H^k(X_\infty)_{\neq 1} \xrightarrow{\sim} \Gr_F^{p} H^{k}(X_{0}, \varphi_{f})_{\neq 1}.
\end{equation}
In terms of the spectral numbers, we can write
\begin{equation}\label{eq:alphapq}
    \alpha^{p,q} = \sum_{\substack{\lambda \text{ in } \eqref{eq:dim-Hpq-spectral}\\ \lambda\neq k-p+1}} (\lambda - k+p)\in\QBbb\cap [0,h^{p,q}_{F}].
\end{equation}
We also introduce the following alternating sum over the cohomological degrees, with $p$ fixed:
\begin{equation}\label{eq:alphap}
    \alpha^{p} = \sum_{q=0}^{n} (-1)^q \alpha^{p,q}.
\end{equation}

\subsubsection{} The choice of the lower branch of the logarithm in the definition of \eqref{eq:def-alphapq-lower} from \cite{cdg2}, rather than the upper branch, was motivated by an interpretation in terms of elementary exponents of Deligne extensions of Hodge bundles, which control the behavior of the latter under semi-stable reduction, cf. \cite[Theorem D]{cdg2}. Below, it will be convenient to shift to the upper branch.

\subsubsection{}\label{subsubsec:invariants-milnor}  Suppose now that $f: (\CBbb^{n+1},0) \to (\CBbb, 0)$ is a germ of a holomorphic function, defining an isolated singularity at the origin. Denote the associated Milnor fiber by $M_{f}$. The preceding definitions carry over to this setting and are expressed in terms of the cohomology of the Milnor fiber. Since we use the reduced Milnor cohomology throughout, we suppress the tilde in the notation $H^n(M_f)$. Its only nonzero degree is the middle degree. This convention also applies when $n=0$. We refer to the corresponding spectral numbers as the spectrum of $f$.

Following Steenbrink \cite[Section 4]{Steenbrink:Du-Bois}, we define
\begin{equation}\label{eq:def-sk}
    s_{p}=h_{F}^{p,n-p}=\dim\mathrm{Gr}_{F}^{p}H^{n}(M_{f}).
\end{equation}
Hence, in terms of spectral numbers,
\begin{equation}\label{eq:def-sk-bis}
    s_{p}=\#\{\lambda \hbox{ spectral number on } H^{n}(M_{f}),\ n-p < \lambda \leq n-p+1 \}.
\end{equation}
In particular, for the Milnor number, we have
\begin{displaymath}
    \mu=\dim H^{n}(M_{f})=\sum_{p=0}^{n}s_{p}.
\end{displaymath}

We also recall from the introduction the definition of the refined spectral counterpart defined by 
\begin{equation}\label{eq:def-sk-tilde}
    \widetilde{s}_{p}=\frac{1}{2\pi i}\tr(\log T_s \mid \mathrm{Gr}_{F}^{p}H^{n}(M_{f}))\in \QBbb\cap [0,s_{p}],
\end{equation}
where $\log$ is the upper branch of the logarithm, having imaginary part in $2\pi [0, 1)$. Since the map $\lambda \mapsto \lambda'=n+1-\lambda$ interchanges the non-integral spectral numbers determining  $h^{p,q}_F$ and $h_{F}^{q,p}$ as in \eqref{eq:dim-Hpq-spectral} (cf. \cite[\textsection 12.1.3]{Peters-Steenbrink}), we see that 
\begin{equation}\label{eq:sptilde-alphapq}
    \widetilde{s}_{p}=\alpha^{n-p,p}=\sum_{p < \lambda < p+1} (\lambda-p) = \sum_{n-p < \lambda' < n-p+1} (n-p+1-\lambda').
\end{equation}

In the language of \cite{DURF1}, we have that the geometric genus and spectral genus are given by 
\begin{displaymath}
    p_g= s_{n}=h^{n,0}_F
\end{displaymath}
and 
\begin{displaymath}
    \widetilde{p}_g=\widetilde{s}_n=\alpha^{0,n}.
\end{displaymath}

\subsubsection{}\label{subsubsec:invariants-disjoint-union} Occasionally, we may need to emphasize the dependence of the invariants defined above on the function. In this case we write $\mu(f)$, $s_k(f)$, $\widetilde{s}_{k}(f)$, $\widetilde{p}_{g}(f)$. Also, we may extend the definition to disjoint unions of germs of isolated singularities, by simply adding up all the contributions. 

\subsection{Some inequalities of Hodge numbers}

It was proven in \cite[Section 4]{Steenbrink:Du-Bois} that there is an inequality of the form 
\begin{equation}\label{eq:steenbrinkineq}\sum_{k=0}^p s_{k} \leq \sum_{k=0}^{p} s_{n-k}.
\end{equation}
This was revisited in \cite[Theorem 1.11]{Friedman-Laza-isolated}, where it was completed into yet another inequality
\begin{equation}\label{eq:friedmanlazaineq}
    \sum_{k=0}^{p-1} s_{n-k} \leq \sum_{k=0}^{p} s_k.
\end{equation}
This was proven for complete intersections. For the purposes of this article, we need to extend \eqref{eq:steenbrinkineq} and \eqref{eq:friedmanlazaineq}, in the hypersurface case, to an exact computation of the difference.
\begin{proposition} \label{prop:steenbrinkstyleineq}
    Let $f: (\CBbb^{n+1}, 0) \to (\CBbb, 0)$ be the germ of an isolated hypersurface singularity. 
    \begin{displaymath}
        \sum_{k=0}^{p} s_{n-k}- \sum_{k=0}^{p} s_k =  \dim \Gr_F^{p+1} H^n(M_{f})_{=1}=\dim \Gr_F^{n-p} H^n(M_{f})_{=1}
    \end{displaymath}
    and 
    \begin{displaymath}
        \sum_{k=0}^{p} s_{k}- \sum_{k=0}^{p-1} s_{n-k} = \dim \Gr_F^{p} H^n(M_{f})_{\neq 1}= \dim \Gr_F^{n-p} H^n(M_{f})_{\neq 1}.
    \end{displaymath}
\end{proposition}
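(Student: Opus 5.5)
We work entirely with the spectral description \eqref{eq:def-sk-bis} and the symmetry of the spectrum of an isolated hypersurface singularity. For each $p$, split $s_p = s_p^{=1} + s_p^{\neq 1}$, where
\[
s_p^{=1}=\dim \Gr_F^p H^n(M_f)_{=1},\qquad s_p^{\neq 1}=\dim \Gr_F^p H^n(M_f)_{\neq 1}.
\]
Spectral numbers with $n-p<\lambda\le n-p+1$ contribute to $s_p$. The integral one, $\lambda=n-p+1$, lies in the $=1$ part, and the non-integral ones in $(n-p,n-p+1)$ lie in the $\neq 1$ part.

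\textbf{Symmetries.} We use the symmetry $\lambda\mapsto n+1-\lambda$ of the spectrum \cite[\S 12.1.3]{Peters-Steenbrink}, already invoked for \eqref{eq:sptilde-alphapq}, separately on the two parts.
\begin{itemize}
\item On non-integral spectral numbers, the interval $(n-p,n-p+1)$ is sent to $(p,p+1)$. This gives $s_p^{\neq1}=s_{n-p}^{\neq1}$, which is the second equality in the second line of the proposition.
\item For the $=1$ part, the input is that $H^n(M_f)_{=1}$ has weights only $n+1$ and higher: its $N$-weight filtration is centered at $n+1$ and $N$ is a morphism of type $(-1,-1)$, so the Hodge numbers $h^{a,b}$ of $\Gr^W$ satisfy $h^{a,b}=h^{n+1-b,\,n+1-a}$. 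Summing gives $s_p^{=1}=s_{n+1-p}^{=1}$, equivalently the integral spectrum is symmetric: $\lambda=n-p+1$ corresponds to $\lambda'=p$. This is the second equality in the first line, reindexed as $\Gr_F^{p+1}$ versus $\Gr_F^{n-p}$.
\item The integral spectral numbers lie in $\{1,\dots,n\}$, since $\lambda\in(0,n+1)$. Hence $s_0^{=1}=0$, and therefore $s_{n+1}^{=1}=0$ as well.
\end{itemize}

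\textbf{Telescoping.} For the first identity, the $\neq1$ contributions cancel termwise in $\sum_{k=0}^p s_{n-k}-\sum_{k=0}^p s_k$ because $s_k^{\neq1}=s_{n-k}^{\neq1}$. Substituting $s_{n-k}^{=1}=s_{k+1}^{=1}$ then gives
\[
\sum_{k=0}^{p} s^{=1}_{k+1}-\sum_{k=0}^{p}s^{=1}_k = s^{=1}_{p+1}-s^{=1}_0 = s^{=1}_{p+1}.
\]
For the second identity, write
\[
\sum_{k=0}^p s_k-\sum_{k=0}^{p-1}s_{n-k}=\sum_{k=0}^p s_k-\sum_{k=0}^{p-1}s_{k}^{\neq1}-\sum_{k=0}^{p-1}s_{k+1}^{=1}.
\]
The $\neq1$ parts leave $s_p^{\neq1}$. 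The $=1$ parts leave $\sum_{k=0}^p s_k^{=1}-\sum_{k=1}^{p}s_k^{=1}=s_0^{=1}=0$. So the difference is $s_p^{\neq1}$, as claimed.

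\textbf{Main obstacle.} The only nontrivial input is the symmetry of the integral part of the spectrum, i.e. the shifted Hodge symmetry $s^{=1}_p=s^{=1}_{n+1-p}$ on the eigenvalue-$1$ part. This is exactly where the hypersurface hypothesis enters. I would justify it from Steenbrink's description: $H^n(M_f)_{=1}$ is a mixed Hodge structure whose weight filtration is the monodromy filtration of $N$ centered at $n+1$. The Lefschetz isomorphisms $N^j\colon \Gr^W_{n+1+j}\to\Gr^W_{n+1-j}(-j)$, combined with Hodge symmetry on each graded piece, give $h^{a,b}=h^{n+1-b,\,n+1-a}$, and summing over $b$ gives the claim. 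Equivalently, one can cite the well-known symmetry $\lambda\mapsto n+1-\lambda$ of the full Steenbrink spectrum, including its integral values.
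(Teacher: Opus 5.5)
Your argument is correct and is the same as the paper's. The paper simply invokes the symmetry $\lambda\mapsto n+1-\lambda$ of the full spectrum together with \eqref{eq:dim-Hpq-spectral}, and your splitting into integral and non-integral spectral numbers, followed by telescoping, is exactly the bookkeeping the paper leaves implicit.

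One inaccuracy in your justification of the integral symmetry: $H^n(M_f)_{=1}$ does not have weights only $\geq n+1$. Its weight filtration is the monodromy filtration of $N$ centered at $n+1$, so when $N\neq 0$ weights occur on both sides of $n+1$. You never actually use this claim, and the Lefschetz-isomorphism-plus-Hodge-symmetry argument you give afterwards is the right one.
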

\begin{proof}
    This follows directly from the symmetry $\lambda \mapsto n+1-\lambda$ of the spectrum and the relations between Hodge numbers and the spectral numbers \eqref{eq:dim-Hpq-spectral}.
\end{proof}
\subsection{Thom--Sebastiani property and suspensions}\label{subsec:TS-properties}
\subsubsection{} Let $f\colon (\CBbb^{n+1},0)\to (\CBbb,0)$ and $g\colon (\CBbb^{m+1},0)\to (\CBbb,0)$ be germs of holomorphic functions defining isolated singularities at the origin. We recall that the Thom--Sebastiani sum of $f$ and $g$ is defined as
\begin{displaymath}
    (f+ g)(x,y)=f(x)+g(y),\quad (x,y)\in \CBbb^{n+1}\times \CBbb^{m+1}.
\end{displaymath}
Let $M_{f}$, $M_{g}$ and $M_{f+g}$ be the Milnor fibers associated to these functions. The theorem of Thom--Sebastiani \cite{Thom-Sebastiani} provides an isomorphism
\begin{equation}\label{eq:TS}
    H^{n+m+1}(M_{f+g})\simeq H^{n}(M_f)\otimes H^{m}(M_g),
\end{equation}
which is compatible with the monodromy action. Varchenko refined this isomorphism at the level of mixed Hodge structures, with the following implication for the spectrum, cf. \cite[Theorem 7.1]{Varchenko-asymptotic}: if we denote the spectral numbers of $f$ and $g$ by $\alpha_i$ and $\beta_j$, respectively, then the spectral numbers of $f+ g$ are given by all the possible sums $\alpha_i+\beta_j$. This is referred to as the Thom--Sebastiani property of the spectrum.

\subsubsection{}\label{subsubsec:thomsebastianisusp} The following statement generalizes \cite[Lemma 7.1]{DURF1}, which dealt with the case $p=0$. 
\begin{lemma}\label{lemma:ThomSebastianisusp}
    Let $f\colon (\CBbb^{n+1},0)\to (\CBbb,0)$ define a germ of an isolated hypersurface singularity at the origin, and let $M$ be the Milnor fiber. Denote by:
    \begin{enumerate}
    \item $N\geq 1$ an integer such that the action of $T_s^N$ is trivial on $H^{n}(M_{f})$.
    \item  $M_{N+1}$ the Milnor fiber of the suspension singularity $f+u^{N+1}=0$ in $(\CBbb^{n+2},0)$. Here, $u$ is an auxiliary variable.
    \end{enumerate} Then, for $0\leq p\leq n$, we have: 
    \begin{displaymath}
        N \left(\widetilde{s}_{n-p}(f)+\sum_{k=0}^{p-1}s_{n-k}(f)\right)=\dim F^{n-p+1}H^{n+1}(M_{N+1}).
    \end{displaymath}
    Moreover, $H^{n+1}(M_{N+1})_{=1}=0.$
\end{lemma}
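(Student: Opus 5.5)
The plan is to compute both sides in terms of spectral numbers, using the Thom--Sebastiani property of the spectrum from \textsection\ref{subsec:TS-properties}. The spectral numbers of $u^{N+1}$ (a germ of dimension $0$) are $j/(N+1)$ for $j=1,\dots,N$. Hence the spectral numbers of $f+u^{N+1}$ are $\lambda+j/(N+1)$, where $\lambda$ runs over the spectrum of $f$ and $1\le j\le N$.

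First I dispose of the last claim. An eigenvalue $1$ of $T_s$ corresponds to an integral spectral number. Since $T_s^N=1$, every $\lambda$ lies in $\frac1N\ZBbb$. If $\lambda+j/(N+1)\in\ZBbb$, then $j/(N+1)\in\frac1N\ZBbb$. Writing $j/(N+1)=a/N$ gives $jN=a(N+1)$, and since $\gcd(N,N+1)=1$ we get $(N+1)\mid j$. This is impossible for $1\le j\le N$, so $H^{n+1}(M_{N+1})_{=1}=0$.

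Next I count the right-hand side. By \eqref{eq:def-sk-bis} applied in dimension $n+1$, we have $\dim\Gr_F^{q}H^{n+1}(M_{N+1})=\#\{\mu' : n+1-q<\mu'\le n+2-q\}$. Summing over $q\ge n-p+1$ gives
\[
\dim F^{n-p+1}H^{n+1}(M_{N+1})=\#\{(\lambda,j) : \lambda+j/(N+1)\le p+1\}.
\]
I then split according to the position of $\lambda$.

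If $\lambda\le p$, every $j$ works. These $\lambda$ are exactly the spectral numbers counted by $\sum_{k=0}^{p-1}s_{n-k}$, since $s_{n-k}$ counts $\lambda\in(k,k+1]$. Their contribution is $N\sum_{k=0}^{p-1}s_{n-k}$.

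If $\lambda>p+1$, no $j$ works, because $j/(N+1)>0$.

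If $p<\lambda\le p+1$, I write $\lambda=p+a/N$ with $1\le a\le N$. The condition becomes $a/N+j/(N+1)\le 1$, that is, $j\le (N-a)(N+1)/N$. Strict inequality always holds by the coprimality argument above, except in the cases $a=N$ or $j=0$, which are excluded. The number of admissible $j$ is therefore $\lfloor (N-a)(N+1)/N\rfloor=\lfloor N-a+(N-a)/N\rfloor=N-a$ for $1\le a\le N$.

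On the other side, \eqref{eq:sptilde-alphapq} gives $N\widetilde s_{n-p}$... and here I must be careful with indices. Since $\widetilde s_p=\sum_{p<\lambda<p+1}(\lambda-p)$, the quantity $\widetilde s_{n-p}$ equals, by the last expression in \eqref{eq:sptilde-alphapq}, $\sum_{p<\lambda<p+1}(p+1-\lambda)$. For $\lambda=p+a/N$ this is $\sum (N-a)/N$. Multiplying by $N$ gives $\sum(N-a)$, which matches the count of admissible $j$; the case $a=N$ (integral $\lambda$) contributes $0$ on both sides.

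Adding the two contributions yields the identity. The only delicate points are the branch and normalization conventions, namely which of the two expressions in \eqref{eq:sptilde-alphapq} to use, and the floor computation. I expect the main obstacle to be getting the index reflection $\lambda\mapsto n+1-\lambda$ right.
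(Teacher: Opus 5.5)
Your proposal is correct and follows essentially the same route as the paper. Both arguments use the Thom--Sebastiani spectrum $\lambda+j/(N+1)$, split into the cases $\lambda\le p$ and $p<\lambda\le p+1$, count $N(p+1-\lambda)$ admissible $j$ in the second case, match this with $\widetilde{s}_{n-p}=\sum_{p<\lambda\le p+1}(p+1-\lambda)$, and rule out integral spectral numbers via $\gcd(N,N+1)=1$. Your aside about strict inequality (and the cases $a=N$, $j=0$) is unnecessary: the count $\lfloor (N-a)(N+1)/N\rfloor=N-a$ of integers $1\le j\le (N-a)(N+1)/N$ already handles the boundary.
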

\begin{proof}
The terms $s_p(f), \widetilde{s}_p(f)$ from \eqref{eq:def-sk}, \eqref{eq:def-sk-tilde} are determined by the spectrum of $f$,  
and by the Thom--Sebastiani property we know that the spectrum of $H^{n+1}(M_{N+1})$ is given in terms of the spectral numbers $\lambda$ of $f$ by 
\begin{equation}\label{eq:newspectrum} 
    \lambda+\frac{k}{N+1}, 1 \leq k \leq N.
\end{equation} 
If $\lambda \leq p$ then for all $1 \leq k \leq N$ the expression \eqref{eq:newspectrum} is strictly less than $p+1$ and hence each such $\lambda$ contributes $N$ times to $\dim F^{n-p+1}H^{n+1}(M_{N+1})$. Now consider the case $p<\lambda\leq p+1$. Multiplying \eqref{eq:newspectrum} by $N$, we find
\begin{displaymath}
    N\lambda+\frac{N k}{N+1}.
\end{displaymath}
This quantity is $\leq N(p+1)$ exactly for $k=1,\ldots, N p + N - N \lambda $. Hence each such $\lambda$ contributes $N(p+1-\lambda)$ times to $\dim F^{n-p+1}H^{n+1}(M_{N+1})$. On the other hand, by \eqref{eq:sptilde-alphapq},
\begin{displaymath}
    \widetilde{s}_{n-p}(f) =\sum_{p<\lambda\leq p+1} (p+1-\lambda),
\end{displaymath}
from which one concludes the first part. The statement about the 1-eigenvalue is immediate, since $\lambda+k/(N+1)$ can't be an integer by the choice of $N$. \end{proof}

\subsection{Examples of singularities with finite monodromy}\label{subsec:finitemonodromy}
Let $f\colon X\to\DBbb$ be a projective morphism of complex manifolds, of relative dimension $n\geq 1$, which is a submersion over $\DBbb^\times$. We suppose that $X_0$ has isolated singularities. In this case, the monodromy on $H^k(X_\infty)$ is trivial for $k\neq n$. We discuss situations for which the monodromy on $H^n(X_\infty)$ is also finite.  

\subsubsection{} We first record a simple criterion to deduce the finiteness of the monodromy on $H^n(X_\infty)$ from the cohomology of the Milnor fiber $M$.
\begin{lemma}\label{lemma:finitemonodromy}
Suppose the monodromy acting on $H^n(M)$ is finite and that $H^n(M)_{=1}=0$. Then the monodromy on $H^n(X_\infty)$ is finite.
\end{lemma}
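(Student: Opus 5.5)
We will use the long exact sequence \eqref{eq:mixedlongexactvanishingcycles}. Since $X_0$ has isolated singularities, $\varphi_f$ is supported on finitely many points. Its hypercohomology is concentrated in degree $n$ and equals $H^n(M)$, the direct sum of the reduced cohomologies of the Milnor fibers at the singular points. The relevant part of the sequence is therefore
\begin{displaymath}
0\to H^{n}(X_0)\to H^{n}(X_\infty)\to H^n(M)\to H^{n+1}(X_0)\to H^{n+1}(X_\infty)\to 0,
\end{displaymath}
where exactness on the left comes from $H^{n-1}(X_0,\varphi_f)=0$. All maps are equivariant for the monodromy $T$, and $T$ acts trivially on $H^n(X_0)$. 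Equivariance holds because the specialization map $H^n(X_0)\to H^n(X_\infty)$ factors through the restriction from a tubular neighbourhood, which retracts onto $X_0$.

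Let $K\subset H^n(X_\infty)$ be the image of $H^n(X_0)$. It is a $T$-stable subspace on which $T$ is the identity. The quotient $H^n(X_\infty)/K$ injects $T$-equivariantly into $H^n(M)$. By hypothesis $T$ has finite order $N$ on $H^n(M)$, and $1$ is not an eigenvalue there. Hence $T^N=\mathrm{id}$ on both $K$ and $H^n(X_\infty)/K$, and $T$ has no eigenvalue $1$ on the quotient.

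The key step is to split the extension $0\to K\to H^n(X_\infty)\to Q\to 0$ as $T$-modules, where $Q=H^n(X_\infty)/K$. Decompose $H^n(X_\infty)$ into generalized eigenspaces for $T$. The generalized $1$-eigenspace maps to zero in $Q$, since $Q$ has no eigenvalue $1$, so it is contained in $K$. Conversely $K$ lies in the generalized $1$-eigenspace, because $T$ acts trivially on it. So $K$ is exactly the generalized $1$-eigenspace, and the sum of the generalized eigenspaces for eigenvalues $\neq1$ maps isomorphically and $T$-equivariantly onto $Q$.

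On this complement $T^N=\mathrm{id}$, and on $K$ we have $T=\mathrm{id}$. It follows that $T^N=\mathrm{id}$ on all of $H^n(X_\infty)$, so the monodromy is finite.

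The only delicate point is the equivariance of the sequence together with the trivial action on $H^n(X_0)$. This is standard and was already used in the paper to obtain \eqref{eq:comparevanishinggeneral}. Without the hypothesis $H^n(M)_{=1}=0$, a nontrivial unipotent extension could occur between $K$ and the $1$-eigenpart of $Q$; the eigenvalue separation is exactly what rules this out.
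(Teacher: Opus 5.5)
Your proof is correct and follows essentially the same route as the paper, which combines the $T_s$-decomposition \eqref{eq:decomposition-cohomology} with the identification \eqref{eq:comparevanishinggeneral} of the $\neq 1$ parts; your generalized eigenspaces of $T$ are exactly those $T_s$-eigenspaces. Your observation that the eigenvalue-$1$ part is the image of $H^n(X_0)$ (because $H^n(M)_{=1}=0$), hence has trivial monodromy, is the step the paper leaves implicit, and you have made it explicit.
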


\begin{proof}
This follows from combining \eqref{eq:decomposition-cohomology} and \eqref{eq:comparevanishinggeneral}.
\end{proof}

\subsubsection{}\label{subsubsec:monodromy-quasi-homogeneous} A natural class of examples where the monodromy on the Milnor fiber can be shown directly to be finite is that of quasi-homogeneous singularities, see the introduction of \cite{DimcaMilnorweightedhomo}. For the formulation, let $(w_0,\dots,w_n)$ be rational weights in $(0,1)$. Choose a common denominator $N$ so that $Nw_i$ is integral for every $i$, and define
\[
\lambda \cdot x := (\lambda^{Nw_0}x_0,\dots,\lambda^{Nw_n}x_n).
\]
We say that a germ $g\colon (\CBbb^{n+1},0)\to (\CBbb,0)$ is quasi-homogeneous if
\[
    g(\lambda \cdot x)=\lambda^N g(x).
\]
In this case, the Milnor fiber of $g$ has finite monodromy of order dividing $N$. The spectral numbers $\lambda_{i}$ are determined from the following identity for the spectral polynomial with rational exponents:
\begin{equation}\label{eq:qhomspectrum}
    \operatorname{Sp}_{g}(T)=\sum_{i=1}^{\mu} T^{\lambda_{i}}=\prod_{i=0}^{n} \frac{T^{w_i}-T}{1-T^{w_i}}.
\end{equation}
Here, $\mu$ is the Milnor number of $g$, given by
\begin{displaymath}
    \mu=\prod_{i}\left(\frac{1}{w_{i}}-1\right).
\end{displaymath}

If $f\colon X\to\DBbb$ is a projective morphism as above, we say that it has quasi-homogeneous singularities if, locally around the singular points in $X_{0}$, the morphism is quasi-homogeneous in some system of local coordinates. Then, to conclude that the monodromy on $H^{n}(X_\infty)$ is finite, by Lemma \ref{lemma:finitemonodromy} it suffices to verify that no exponent occurring in \eqref{eq:qhomspectrum} is an integer.

\subsubsection{} Brieskorn--Pham singularities are quasi-homogeneous. The expression in \eqref{eq:qhomspectrum} shows that the spectral numbers are given by 
\begin{equation}\label{eq:BP-integral-point-countes}
     \frac{k_0}{a_0} + \frac{k_1}{a_1} + \ldots + \frac{k_n}{a_n},\quad 0 < k_i < a_i.
\end{equation}
It is elementary to verify that if the integers $a_0, \ldots, a_n$ are pairwise coprime this sum is never an integer. For otherwise,  multiplying \eqref{eq:BP-integral-point-countes} by $\prod_{i}a_{i}$ and reducing modulo any $a_j$ one finds a contradiction. Hence in this case, $H^n(M)_{=1}=0$.

\subsubsection{} To give more concrete examples, we give the following version of a result of L\^e D\~{u}ng Tr\'ang \cite{TranNoeuds}:
\begin{lemma}\label{lemma:ledungtrang}
    Let $g\colon (\CBbb^2,0)\to (\CBbb,0)$ be a germ of a holomorphic function, such that $g=0$ defines a germ of an irreducible isolated curve singularity at the origin. Then, for every quadratic form $Q(u_1,\ldots, u_{2n})=\sum_{i=1}^{2n}u_{i}^{2}$, the monodromy of the Milnor fiber of the Thom--Sebastiani sum $g+ Q$ is finite, and $1$ is not an eigenvalue.
    
\end{lemma}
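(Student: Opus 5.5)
The plan is to pass from the monodromy of $g+Q$ to that of $g$ via Thom--Sebastiani, and then to use the classical structure of the monodromy of an irreducible plane curve singularity. By \eqref{eq:TS}, iterated, the reduced Milnor cohomology of $g+Q$ is
\begin{displaymath}
H^{2n+1}(M_{g+Q})\simeq H^{1}(M_g)\otimes H^{0}(M_{u_1^2})\otimes\cdots\otimes H^{0}(M_{u_{2n}^2}).
\end{displaymath}
This isomorphism is compatible with monodromy, so the monodromy of $g+Q$ is $T_g\otimes(-1)^{\otimes 2n}=T_g$. Equivalently, the spectrum of $g+Q$ is obtained by adding $n$ to the spectrum of $g$, since each $u_i^2$ has the single spectral number $1/2$. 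Hence it suffices to prove two things for the monodromy $T_g$ on $H^1(M_g)$: that $T_g$ is finite, i.e.\ has finite order, and that $1$ is not an eigenvalue.

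The second point is the classical fact for branches. The characteristic polynomial of $T_g$ is $\Delta(t)=(t-1)\zeta(t)^{-1}$, where $\zeta$ is the monodromy zeta function. By A'Campo's formula, or equivalently the Alexander polynomial of the iterated torus knot given by the Puiseux pairs, $\Delta(1)=\pm1$ for an irreducible germ, since the link is a knot. Hence $1$ is not a root. I could also argue that $\dim\ker(T_g-1)$ on $H^1(M_g)$ equals $r-1$, where $r$ is the number of branches, and this vanishes for $r=1$.

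The first point is the real content, and the main obstacle. For a plane curve, $T_g$ is in general \emph{not} finite: it has Jordan blocks. The order-of-$T_g$ claim therefore cannot hold for $g$ itself, unless the lemma's intended meaning is that the relevant monodromy on $H^n(X_\infty)$ becomes finite. So I would reinterpret the lemma through Lemma~\ref{lemma:finitemonodromy}, which needs only that $T$ be finite on $H^n(M)$, and then check where Jordan blocks can occur. For an isolated singularity, Jordan blocks of size $2$ for $T_g$ on $H^1$ occur only for the eigenvalue $1$: by the weight filtration, $N$ acting on $H^1(M_g)_{\neq1}$ would have to map $\Gr^W_2$ to $\Gr^W_0$, but the weights on $H^1(M_g)_{\neq1}$ are centered at $1$ and lie in $\{0,1,2\}$. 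For branches, Lê's theorem asserts that the monodromy of $g$ is of finite order exactly because the link is a single iterated torus knot, whose fibration is built from periodic pieces glued along tori. The Dehn twists along these tori act trivially on homology once $r=1$, since the separating tori carry no homological twist contribution.

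So the key step is Lê's result, \cite{TranNoeuds}, that for an irreducible plane curve germ the homological monodromy is semisimple, i.e.\ has finite order. I would cite it. Alternatively, I would prove it via the weight argument above: $N\neq0$ forces a size-$2$ block, which exists only on the eigenvalue-$1$ part, and that part vanishes by the previous paragraph. This alternative seems cleanest: the eigenvalue-$1$ part is $0$, so $N$ vanishes on $H^1(M_g)=H^1(M_g)_{\neq1}$. Then $T_g=T_s$ is of finite order, and $T_{g+Q}=T_g$ inherits both properties.
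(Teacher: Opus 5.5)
\textbf{Verdict: the finiteness step has a genuine gap.} The rest of your proposal is fine.

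\textbf{What works.} Your Thom--Sebastiani reduction matches the paper's. The paper tensors with $H^{2n-1}(M_Q)$, whose monodromy is trivial because the spectrum of $Q$ is the integer $n$; you tensor with $2n$ copies of the $(-1)$-eigenline of $u_i^2$, which is the same thing. Your proof that $1$ is not an eigenvalue, via $\Delta(1)=\pm1$ for the Alexander polynomial of a knot or via $\dim H^1(M_g)_{=1}=r-1$, is a valid replacement for the citation of L\^e's Th\'eor\`eme 3.2.4.

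\textbf{The gap.} First, your worry that finiteness ``cannot hold for $g$ itself'' is unfounded. For an \emph{irreducible} germ, $T_g$ has finite order; this is L\^e's Th\'eor\`eme 3.3.1 in \cite{TranNoeuds}, which the paper cites. So no reinterpretation through Lemma~\ref{lemma:finitemonodromy} is needed.

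Second, and more seriously, the weight-filtration argument you call the cleanest is wrong: it swaps the roles of the two eigenvalue parts.
\begin{itemize}
\item On $H^1(M_g)_{\neq1}$, the weight filtration is the monodromy filtration of $N$ centered at $1$, with weights in $\{0,1,2\}$. This is precisely what \emph{permits} $N\colon\Gr^W_2\to\Gr^W_0$ to be nonzero, i.e.\ Jordan blocks of size $2$.
\item On $H^1(M_g)_{=1}$, the filtration is centered at $2$ and concentrated in weight $2$, which forces $N=0$.
\end{itemize}
So for plane curves, non-semisimple monodromy can only occur for eigenvalues $\neq1$, and knowing $H^1(M_g)_{=1}=0$ tells you nothing about $N$ on the remaining part. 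For example, the reducible curves $x^p+y^q+x^2y^2$ with $\frac1p+\frac1q<\frac12$ have a $2\times2$ Jordan block for the eigenvalue $-1$. Indeed, adding one square gives, after completing the square, the hyperbolic $T_{2,p,q}$ surface singularity, whose monodromy has a size-$2$ block for the eigenvalue $1$.

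\textbf{What is actually needed.} Semisimplicity for a branch is genuinely topological input, not something the Hodge theory of the Milnor fiber supplies. The underlying reason is this: the geometric monodromy of an iterated torus knot is, up to isotopy, periodic on pieces glued along circles that are separating in the Milnor fiber. Hence a suitable power is a product of Dehn twists along null-homologous curves, and so acts trivially on $H_1$. Your Dehn-twist paragraph points in this direction but remains a heuristic. To close the gap, cite L\^e's finiteness theorem (your first option) and drop the weight shortcut; the argument then coincides with the paper's.
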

\begin{proof}
The case $n=0$ is a combination of \cite[proof of Th\'eor\`eme 3.2.4 \& Th\'eor\`eme 3.3.1]{TranNoeuds}. The first one entails that 1 is not an eigenvalue and the second one says that the monodromy is finite. In the general case, by the Thom--Sebastiani theorem, there is an isomorphism 
\[
     H^1(M_g)\otimes H^{2n-1}(M_{Q})  \simeq H^{2n+1}(M_{g+ Q}),
\]
compatible with the action of the monodromy. By the discussion in \textsection\ref{subsubsec:monodromy-quasi-homogeneous} on the monodromy of the quasi-homogeneous singularities, and in particular \eqref{eq:qhomspectrum}, the monodromy on 
\begin{equation}\label{eq:monodromysuspsension} 
    H^{2n-1}(M_{Q})
\end{equation}
is trivial. Hence the monodromy of $g + Q$ satisfies the same stated properties as $g$.
\end{proof}

In the global setting of a projective degeneration $f\colon X\to\DBbb$ as above, if locally around the singular points $f$ has the form of Lemma \ref{lemma:ledungtrang}, in some system of local coordinates, then the monodromy on $H^{n}(X_{\infty})$ is finite.

\section{Comparisons of Kähler differentials}\label{sec:comparison-Kahler}

Let $f\colon X\to S$ be a flat, projective morphism of complex manifolds, of relative dimension $n$. We suppose that $S$ is one-dimensional and connected, and that $f$ is a submersion over $S^{\circ}=S\setminus\lbrace 0\rbrace$, for some $0\in S$. We write $X^{\circ}=f^{-1}(S^{\circ})$. For every integer $p\geq 0$, the sheaf $\Omega^{p}_{X^{\circ}/S^{\circ}}$ is locally free and we can consider its Knudsen--Mumford determinant of cohomology $\lambda(\Omega^{p}_{X^{\circ}/S^{\circ}})$. In this section, we compare natural extensions of this line bundle to $S$, called K\"ahler and logarithmic extensions. 

 In order to be consistent with some references, we initially assume that $f$ arises as the restriction of a morphism of algebraic varieties to an open analytic subset of the base space. In the particular case when $X_0$ has isolated singularities, we explain in \textsection\ref{subsec:isolated-singularities} how to bypass the algebraicity restriction.  

For background on the determinant of the cohomology, the reader is referred to Knudsen--Mumford \cite{KnudsenMumford}, formulated in the algebraic setting, and Bismut--Gillet--Soul\'e \cite{BGS3} and Bismut--Bost in the complex analytic setting \cite[Section 4]{bismutbost}. Both constructions compare via the analytification functor.

To lighten notation, we use additive notation for the tensor product of line bundles on the base, i.e. we may write $L+M$ instead of $L\otimes M$ for line bundles $L$ and $M$.

\subsection{The Kähler extension}\label{subsec:Kahler-extension}

Following Fang--Lu--Yoshikawa \cite[Section 5.1]{FLY}, the natural morphism $f^{\ast}\Omega_{S}\to\Omega_{X}$ induces the differential maps of a complex
\begin{equation}\label{eq:Kahler-complex}
    \widetilde{\Omega}^{p}_{X/S}\colon (f^{\ast}\Omega_{S})^{\otimes p}\to(f^{\ast}\Omega_{S})^{\otimes (p-1)}\otimes\Omega_{X}\to\cdots\to
    f^{\ast}\Omega_{S}\otimes\Omega_{X}^{p-1}\to\Omega_{X}^{p}.
\end{equation}
There is a canonical morphism $\widetilde{\Omega}^{p}_{X/S}\to\Omega_{X/S}^{p}$, which is a quasi-isomorphism over $S^{\circ}$. Hence, the determinant bundle
\begin{equation}\label{eq:determinant-Kahler-complex}
    \begin{split}
    \lambda(\widetilde{\Omega}^{p}_{X/S})&=\sum_{j=0}^{p}(-1)^{j}\lambda(f^{\ast}\Omega_{S}^{\otimes j}\otimes\Omega_{X}^{p-j})\\
    &\simeq\sum_{j=0}^{p}(-1)^{j}j\chi(\Omega^{p-j}_{X|X_{\infty}})\ \Omega_{S} + \sum_{j=0}^{p}(-1)^{j}\lambda(\Omega_{X}^{p-j})
    \end{split}
\end{equation}
is a line bundle extension of $\lambda(\Omega_{X^{\circ}/S^{\circ}})$ to $S$: there is a canonical isomorphism
\begin{equation}\label{eq:kahler-extension}
    \lambda(\widetilde{\Omega}^{p}_{X/S})_{\mid S^{\circ}}\simeq\lambda(\Omega_{X^{\circ}/S^{\circ}}^{p}).
\end{equation}
We refer to this extension as the \emph{K\"ahler extension} of $\lambda(\Omega_{X^{\circ}/S^{\circ}}^{p})$. By \cite[Corollary 1.2.7]{Kato-Saito} it can equivalently be interpreted as the determinant of the cohomology of the left derived exterior power $L\wedge^{p}\Omega_{X/S}$, that is
\begin{displaymath}
    \lambda(\widetilde{\Omega}^{p}_{X/S})\simeq\lambda(L\wedge^{p}\Omega_{X/S}).
\end{displaymath}

\subsection{The logarithmic extensions}
We next introduce extensions of the determinant bundles of sheaves of differentials, obtained from normal crossings models and sheaves of logarithmic forms.
\subsubsection{}  

Let $g\colon Y\to S$ be a normal crossings model of $f\colon X\to S$, obtained by performing an embedded resolution of singularities of $X_0$ in $X$. We consider the sheaf of relative logarithmic differentials $\Omega_{Y/S}(\log)=\Omega_{Y}(\log Y_{0})/g^{\ast}\Omega_{S}(\log [0])$, and write $\Omega^{p}_{Y/S}(\log)$ for its $p$-th exterior power. We recall that the relative cohomology sheaves of $\Omega^{p}_{Y/S}(\log)$ are locally free, and are called the Hodge bundles. Up to unique isomorphisms, these don't depend on the choice of normal crossings model. We refer to the work of Steenbrink for these facts \cite{Steenbrink-limits, Steenbrink-mixedonvanishing}, and to \cite[Section 2]{cdg2} for a summary. Consequently, up to unique isomorphism, the determinant of the cohomology $\lambda(\Omega^{p}_{Y/S}(\log))$ is independent of the normal crossings model too. This is \emph{the logarithmic extension} of $\lambda(\Omega_{X^{\circ}/S^{\circ}}^{p})$. 

Similar to \eqref{eq:Kahler-complex}, we have a  complex of vector bundles which is naturally quasi-isomorphic to $\Omega^{p}_{Y/S}(\log)$:
\begin{equation}\label{eq:log-complex}
     (g^{\ast}\Omega_{S}(\log))^{\otimes p}\to(g^{\ast}\Omega_{S}(\log))^{\otimes (p-1)}\otimes\Omega_{Y}(\log)\to\cdots\to g^{\ast}\Omega_{S}(\log [0])\otimes\Omega_{Y}^{p-1}(\log)\to\Omega_{Y}^{p}(\log).
\end{equation}
In particular, we find that 
\begin{equation}\label{eq:determinant-log-complex}
    \begin{split}
    \lambda(\Omega_{Y/S}^p(\log)) &\simeq \sum_{j=0}^p \lambda(g^\ast \Omega_S^j(\log) \otimes \Omega_{Y}^{p-j}(\log))^{(-1)^j}\\
    &\simeq {\sum_{j=0}^p j (-1)^j \chi(\Omega_{X|_{X_\infty}}^{p-j})}\ \Omega_S(\log)+ \sum_{j=0}^p (-1)^j \lambda(\Omega_{Y}^{p-j}(\log)).
    \end{split}
\end{equation}

\subsubsection{}  
The above is sometimes called the lower logarithmic extension since it corresponds to the lower branch of the logarithm in the context of Deligne extensions of local systems. We will also actually consider the upper logarithmic extension. The lower and upper extensions correspond to each other through Grothendieck--Serre duality, cf. \cite[Section 2]{FreeMori}  or \cite[Proposition 2.9]{Kollar2}:

\begin{displaymath}
    {^{u}}\lambda(\Omega_{X^{\circ}/S^{\circ}}^p) \simeq \lambda(\Omega_{Y/S}^{n-p}(\log))^{(-1)^{n+1}}
    \simeq\lambda(\Omega_{Y/S}^{n-p}(\log)^\vee \otimes\omega_{Y/S}),
\end{displaymath}
where the decoration $u$ indicates the upper extension. Since $\Omega_{Y/S}^{n-p}(\log)^\vee \simeq \Omega^p_{Y/S}(\log) \otimes \omega_{Y/S}(\log)^\vee$ and $\omega_{Y/S}(\log)=\omega_{Y/S}(Y_{0, \mathrm{red}}-Y_{0})$, we in fact have a natural isomorphism 
\begin{equation} \label{eq:upperextensioniso}{^{u}}\lambda(\Omega_{X^{\circ}/S^{\circ}}^p) \simeq \lambda(\Omega^p_{Y/S}(\log) \otimes \Ocal(Y_0-Y_{0,\mathrm{red}})).
\end{equation}

\subsection{Comparisons of extensions}\label{subsec:comparisons}
In this section, for sheaves of holomorphic differentials, we wish to compare the K\"ahler and logarithmic extensions of the determinants of the cohomology. We will use the constructions in Section \ref{section:Hodgeinvariants}, and in particular the complex of vanishing cycles. For this, restricting $S$ if necessary, we choose a holomorphic coordinate $t$ centered at $0$. The numerical invariants introduced in \emph{loc. cit.} do not depend on the choice of coordinate. 

\subsubsection{}  
Following \cite[Section 3.3]{cdg2}, we define an integer $\mu_p$ by the canonical isomorphism comparing the K\"ahler and the logarithmic extensions:
\begin{equation}\label{eq:def-mup}
    \lambda(\widetilde{\Omega}^{p}_{X/S})=\lambda(\Omega^{p}_{Y/S}(\log))+\mu_{p}.
\end{equation}
Here and below, to lighten notation, we write $L+k$ instead of $L+\Ocal(k[0])$ for a line bundle $L$ on $S$ and an integer $k$. Hence if $M = L+k$ and $\sigma$ is a trivializing section of $L$, then $\sigma \cdot t^{-k}$ is a trivializing section of $M$.

It is our purpose to provide a general expression for $\mu_{p}$ in terms of mixed Hodge structures.

\begin{theorem}\label{thm:mu-p-general}
Let $f\colon X\to S$ be as above. Then 

\begin{displaymath}
   \begin{split}
    \mu_p &=(-1)^{p-1} \chi(F^{n-p+1}\varphi_f)\\
    &=(-1)^{p-1} \sum_j (-1)^{j}\dim F^{n-p+1} H^j(X_0, \varphi_f).
    \end{split}
\end{displaymath}
\end{theorem}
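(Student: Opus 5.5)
We reduce the comparison of the two extensions to a statement about filtered complexes on the special fiber, and then identify the resulting discrepancy with the Hodge filtration on vanishing cycles via M. Saito's theory.

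\emph{Step 1: reduce to a length.} Both $\lambda(\widetilde{\Omega}^{p}_{X/S})$ and $\lambda(\Omega^{p}_{Y/S}(\log))$ are determinants of $Rf_*$ of complexes that agree over $S^{\circ}$. Write $\pi\colon Y\to X$ for the embedded resolution. We compare $R\pi_*$ of the logarithmic complex \eqref{eq:log-complex} with the Kähler complex \eqref{eq:Kahler-complex} on $X$. The integer $\mu_p$ is then the Euler characteristic of the cone. This cone is supported on $X_0$ and is a torsion object over $S$, so $\mu_p$ is, up to sign, the alternating sum of lengths of its $R f_*$-cohomology.

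To organize the computation, we use the relative de Rham viewpoint. The expressions \eqref{eq:determinant-Kahler-complex} and \eqref{eq:determinant-log-complex} show that, up to the explicit $\Omega_S$ versus $\Omega_S(\log)$ correction terms, we must compare $\lambda(\Omega^{p-j}_X)$ with $\lambda(R\pi_*\Omega^{p-j}_Y(\log Y_0))$. Equivalently, we compare $\Omega_X^{\bullet}$ and $R\pi_*\Omega_Y^{\bullet}(\log)$ graded by the stupid filtration. Summing with the signs $(-1)^j$ telescopes to a statement about $\mathrm{Gr}$ pieces of the filtered relative de Rham complexes.

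\emph{Step 2: express both sides Hodge-theoretically.} By Steenbrink, $Rg_*\Omega^{\bullet}_{Y/S}(\log)\otimes\Ocal_{Y_0}$ with its stupid filtration computes $H^*(X_\infty)$ with its limit Hodge filtration. The Kähler complex restricted to $X_0$ computes, via the filtered Koszul/de Rham description of $\psi_f$ and $\phi_f$ in M. Saito's theory, the nearby cycles with the naive filtration. The discrepancy between these is governed by the $V$-filtration: on the graded pieces with eigenvalue $\neq 1$ the two filtrations differ by the index shift encoded in the normalization of \S\ref{subsub:spectralnumber}. On the eigenvalue $1$ part the comparison goes through the exact sequence \eqref{eq:mixedlongexactvanishingcycles}, using that $H^*(X_0)$ contributes no discrepancy.

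Carrying this out, the cone is identified, as an $\Ocal_S$-torsion object whose length is measured by the order of vanishing in $t$, with $\Gr$-pieces of $\varphi_f$ lying in $F^{n-p+1}$. The justification is that the degree $p$ truncation of the Kähler complex misses exactly the part of the vanishing cycles with Hodge level $\geq n-p+1$. This yields $\mu_p=(-1)^{p-1}\chi(F^{n-p+1}\varphi_f)$. The second equality in the statement is then just the definition of the hypercohomological Euler characteristic, using strictness of the Hodge filtration on the mixed Hodge complex $\varphi_f$.

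\emph{Step 3: sanity checks.} We check the formula in two cases, which also fixes signs. For $p=0$ it should give $\mu_0=-\chi(F^{n+1}\varphi_f)=0$, consistent with $\lambda(\Ocal_X)$ agreeing with the log extension on rational-type fibers. For an ordinary double point it should recover the known $\mu_p$ from \cite{cdg2}.

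\emph{Main obstacle.} The main difficulty is Step 2: rigorously identifying the cone of $\widetilde{\Omega}^{p}_{X/S}\to R\pi_*\Omega^p_{Y/S}(\log)$ with $F^{n-p+1}\varphi_f$ as a filtered object. The plan is to express both sides through Saito's filtered $\mathcal D$-module $i_{f,+}\Ocal_X$ and its $V$-filtration. The Kähler complex would be realized as the Koszul-type complex $\Gr^F$ of $\mathrm{DR}$ along $\partial_t$, and the log complex as $\Gr^F\mathrm{DR}(V^{>0})$ (lower branch). The difference would then be $\Gr^F\mathrm{DR}$ of $\phi_{f,1}$ together with the shifted pieces. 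Algebraicity is used here to invoke GAGA and Saito's global results.
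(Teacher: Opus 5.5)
Your Step 1 is sound and matches the paper's starting point. The gap is in Step 2, which carries the whole content of the theorem. Saying that the cone ``is identified with $\Gr$-pieces of $\varphi_f$ lying in $F^{n-p+1}$'' because the degree $p$ truncation ``misses exactly'' the vanishing cycles of Hodge level $\geq n-p+1$ restates the conclusion rather than proving it. The mechanism you propose around it also does not fit:
\begin{itemize}
\item $\widetilde{\Omega}^{p}_{X/S}$ is a Koszul resolution of a single derived exterior power, not a filtered de Rham complex. You give no reason it would compute nearby cycles with a ``naive'' filtration.
\item The $V$-filtration and eigenvalue discussion is beside the point. The statement involves only $F$; monodromy eigenvalues enter only when comparing with the upper extension (Proposition~\ref{prop:mup+-mup}).
\item Your claim that $H^{*}(X_0)$ ``contributes no discrepancy'' is the opposite of what happens. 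The term-by-term comparison of $\Omega^k_X$ with $R\pi_*\Omega^k_Y(\log E)$ is governed entirely by $X_0$. Your remark about the stupid filtration was the right lead: the cone of $\Omega^\bullet_X\to R\pi_*\Omega^\bullet_Y(\log E)$ is a Hodge-filtered model of local cohomology along $X_0$, which is dual to $H^*(X_0)$.
\item $H^*(X_\infty)$ enters only through the $\Omega_S$ versus $\Omega_S(\log)$ twists, which you set aside as mere corrections.
\end{itemize}

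The missing steps, as the paper carries them out, are these.
\begin{enumerate}
\item[(i)] The twist terms are not negligible. Since $\Omega_S(\log)=\Omega_S\otimes\mathcal{O}([0])$ and $\chi(\Omega^m_X|_{X_\infty})=\chi(\Omega^m_{X_\infty})+\chi(\Omega^{m-1}_{X_\infty})$, they contribute $\sum_{k=0}^{p-1}(-1)^{p-k}\chi(\Omega^k_{X_\infty})$ to $-\mu_p$.
\item[(ii)] Steenbrink's distinguished triangle $R\pi_*(\Omega^{m}_Y(\log E)(-E))\to\Omega^{m}_X\to i_*\underline{\Omega}^{m}_{X_{0,\mathrm{red}}}$ is where algebraicity is used. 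Dualizing it by Grothendieck duality gives $\lambda(\Omega^k_Y(\log E))-\lambda(\Omega^k_X)=(-1)^n\chi(\underline{\Omega}^{n+1-k}_{X_{0,\mathrm{red}}})$. By Du Bois's theorem, these Euler characteristics are alternating sums of $\dim\Gr_F^{m}H^j(X_0)$ for Deligne's mixed Hodge structure.
\item[(iii)] Apply Serre duality $\chi(\Omega^k_{X_\infty})=(-1)^n\chi(\Omega^{n-k}_{X_\infty})$ and reindex. This gives $-\mu_p=(-1)^p\sum_{m=n-p+1}^{n}\sum_j(-1)^j\bigl(\dim\Gr_F^mH^j(X_\infty)-\dim\Gr_F^mH^j(X_0)\bigr)$.
\item[(iv)] $\Gr_F$ is exact on the sequence \eqref{eq:mixedlongexactvanishingcycles} of mixed Hodge structures, so this becomes $(-1)^p\chi(F^{n-p+1}\varphi_f)$, using $F^{n+1}=0$.
\end{enumerate}
The vanishing cycles therefore appear only numerically, as ``$X_\infty$ minus $X_0$''. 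They do not come from a filtered identification of the cone with $F^{n-p+1}\varphi_f$. Your $\mathcal{D}$-module plan might eventually be made to work, but as written it does not supply this step.
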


\begin{proof}    
    Let $\pi\colon Y\to X$ be the log-resolution map of $i:X_{0, \mathrm{red}} \to X$, and set $E = (\pi^{-1}(X_0))_{\mathrm{red}}$. Then $ \Omega_{Y}(\log )=\Omega_{Y}(\log E)$. Comparing \eqref{eq:determinant-Kahler-complex} and \eqref{eq:determinant-log-complex} we find that, similarly to \cite[Proposition 3.7, especially (3.6) in the proof]{cdg2}:    \begin{equation}\label{eq:compare-log-and-kahler}
    -\mu_p=\lambda(\Omega_{Y/S}^p(\log))-\lambda(\widetilde{\Omega}_{X/S}^p) = \sum_{k=0}^{p-1} (-1)^{p-k} \chi(\Omega_{X_\infty}^{k}) +  \sum_{k=0}^p (-1)^{p-k}\left[\lambda(\Omega_{Y}^{k}(\log E)) - \lambda(\Omega_X^{k})\right] .
    \end{equation}
To compute the latter difference, we note that according to \cite[Proposition 3.3]{SteenbrinkVanishingThm}\footnote{This is where we use the algebraicity assumption on $f$.}, there is a distinguished triangle: 
\begin{equation}\label{eq:distinguishedtriangleOmegalog}
 R\pi_\ast (\Omega^{n+1-k}_{Y}(\log E)(-E)) \to \Omega_X^{n+1-k} \to i_{\ast}\underline{\Omega}_{X_{0, \mathrm{red}}}^{n+1-k} ,
    \end{equation}
    where $\underline{\Omega}^m_{X_{0, \mathrm{red}}}$ denotes the $m$-th graded quotient of the Du Bois-complex of $X_{0, \mathrm{red}}$, shifted by $m$, cf. \cite{duBois}. For a variety $Z$ admitting a dualizing sheaf $\omega_{Z}$ and a complex $\mathcal{F}$ of coherent sheaves on $Z$, we denote by $\DBbb(\mathcal{F})=R\mathcal{H}om(\mathcal{F}, \omega_Z)$. We will use this for the non-singular varieties $X$, $Y$, and for $X_{0,\mathrm{red}}$, which is a divisor in $X$, hence admits a dualizing sheaf by adjunction. With this in mind, by Grothendieck duality, we have
    \begin{displaymath}
        \mathbb{D}(R\pi_\ast (\Omega^{n+1-k}_{Y}(\log E)(-E))\simeq R\pi_\ast (\Omega^{k}_{Y}(\log E)).
    \end{displaymath} 
    Hence, by applying $\DBbb$ to \eqref{eq:distinguishedtriangleOmegalog} we conclude a similar distinguished triangle: 
    \[
        i_{\ast}\mathbb{D}(\underline{\Omega}_{X_0, \mathrm{red}}^{n+1-k})[-1] \to \Omega_X^{k} \to R\pi_\ast (\Omega^{k}_{Y}(\log E)) ,
    \]
where we have used the relationship $\mathbb{D} (i_{\ast}\underline{\Omega}_{X_0, \mathrm{red}}^{n+1-k})=i_{\ast}\mathbb{D}(\underline{\Omega}_{X_0, \mathrm{red}}^{n+1-k})[-1]$. Hence we find that 
    \[ 
        \lambda(\Omega^{k}_{Y}(\log E))-\lambda(\Omega^{k}_{X})=\chi(\mathbb{D}(\underline{\Omega}^{n+1-k}_{X_0,\mathrm{red}})) .
    \]
Again by Grothendieck duality we have 
$$
    \chi(\Omega_{X_{\infty}}^{k})=(-1)^{n}\chi(\Omega_{X_{\infty}}^{n-k}), \quad  \chi(\mathbb{D}(\underline{\Omega}^{n+1-k}_{X_0,\mathrm{red}})) = (-1)^{n} \chi(\underline{\Omega}^{n+1-k}_{X_0,\mathrm{red}}).
$$
Inserting these relationships into the right hand side of \eqref{eq:compare-log-and-kahler}, and reindexing the second sum, we see that the difference is given by:
    \begin{equation}\label{eq:goingforvanishing}
        \sum_{k=0}^{p-1} (-1)^{n-p+k} \chi(\Omega_{X_\infty}^{n-k}) - \sum_{k=0}^{p-1} (-1)^{n-p+k} \chi(\underline{\Omega}^{n-k}_{X_0,\mathrm{red}})
    \end{equation}
Since the filtered Du Bois complex computes the singular cohomology with Deligne's Hodge filtration \cite[Th\'eor\`eme 4.5]{duBois}, we have that 
\[
\chi(\underline{\Omega}^{n-k}_{X_0,\mathrm{red}})= \sum_{j} (-1)^j \dim H^j(X_0,\underline{\Omega}^{n-k}_{X_0,\mathrm{red}} )=(-1)^{n-k} \sum_{j} (-1)^j \dim \Gr_{F}^{n-k} H^j(X_0)
\]
and analogously for $\Omega_{X_\infty}^{n-k}$. Inserting these rewritings into  \eqref{eq:goingforvanishing}, and utilizing that the long exact sequence \eqref{eq:mixedlongexactvanishingcycles} is a long exact sequence of mixed Hodge structures, we finally find that:
\[
-\mu_p=(-1)^p \sum_j (-1)^j \dim F^{n-p+1} H^j(X_0, \varphi_f ).
\]
Keeping track of indices, one sees that this can be recast as
$$
    (-1)^{p}\chi(F^{n-p+1}\varphi_{f}).
$$

\end{proof}
\subsubsection{}  

For completeness, we also provide the comparison between the K\"ahler and the upper logarithmic extensions. This won't play much of a role in the sequel. 

Define $\mu_p^+$ as the comparison
\begin{displaymath}
\lambda(\widetilde{\Omega}^{p}_{X/S})={^{u}}\lambda(\Omega_{X^{\circ}/S^{\circ}}^p) +\mu_{p}^+.
\end{displaymath}
Then, an argument analogous to that in Theorem \ref{thm:mu-p-general} proves the following statement.
\begin{proposition}\label{prop:mu-p-upper}
    Let $X\to S$ be as above. Then 
    \begin{displaymath}
        \begin{split}
        \mu_p^+=&(-1)^{p-1} \chi(\varphi_f/F^{p+1}) \\
       =& (-1)^{p-1} \sum_{j} (-1)^j \dim (H^{j}(X_0, \varphi_f)/F^{p+1}),
        \end{split}
    \end{displaymath}
    where $F^{p+1}$ is a shorthand for $F^{p+1}H^{j}(X_0, \varphi_f)$.
\end{proposition}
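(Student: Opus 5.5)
The plan is to run the proof of Theorem \ref{thm:mu-p-general} again, with the lower logarithmic extension replaced by the upper one via \eqref{eq:upperextensioniso}. Concretely, write
\begin{displaymath}
-\mu_p^+={^{u}}\lambda(\Omega_{X^{\circ}/S^{\circ}}^p)-\lambda(\widetilde{\Omega}^p_{X/S}).
\end{displaymath}
By Grothendieck--Serre duality, ${^{u}}\lambda(\Omega^p_{X^\circ/S^\circ})\simeq (-1)^{n+1}\lambda(\Omega^{n-p}_{Y/S}(\log))$. Using the definition \eqref{eq:def-mup} of $\mu_{n-p}$, this becomes $(-1)^{n+1}\big(\lambda(\widetilde\Omega^{n-p}_{X/S})-\mu_{n-p}\big)$. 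The problem then splits in two.

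First, compare the Kähler extensions $\lambda(\widetilde\Omega^p_{X/S})$ and $(-1)^{n+1}\lambda(\widetilde\Omega^{n-p}_{X/S})$. Both restrict to the same line bundle over $S^\circ$, via duality on the smooth fibers. I would compute their difference from the expansion \eqref{eq:determinant-Kahler-complex} together with Grothendieck duality on $X$: the dual of $\Omega^k_X$ is $\Omega^{n+1-k}_X$, and $\omega_{X/S}=\omega_X\otimes f^*\Omega_S^\vee$. The expected outcome is a correction supported at $0$, given by an alternating sum of $\chi(\Omega^k_{X_\infty})$'s, namely Hodge numbers of the nearby fiber. An alternative is to redo the Steenbrink triangle \eqref{eq:distinguishedtriangleOmegalog} argument directly for the sheaves $\Omega^p_{Y}(\log E)\otimes\Ocal(Y_0-Y_{0,\mathrm{red}})$. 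I think the first route is cleaner, since it reuses Theorem \ref{thm:mu-p-general} as a black box.

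Second, apply Theorem \ref{thm:mu-p-general} with $p$ replaced by $n-p$, which gives $\mu_{n-p}=(-1)^{n-p-1}\chi(F^{p+1}\varphi_f)$. Combined with the first step, we then need
\begin{displaymath}
(-1)^{p}\chi(\varphi_f/F^{p+1})=(-1)^{n-p-1}\chi(F^{p+1}\varphi_f)\cdot(\pm1)+\text{(global Hodge terms)}.
\end{displaymath}
To close this, use $\chi(\varphi_f/F^{p+1})=\chi(\varphi_f)-\chi(F^{p+1}\varphi_f)$ and express $\chi(\varphi_f)=\chi(X_\infty)-\chi(X_0)$ through \eqref{eq:mixedlongexactvanishingcycles}. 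Then rewrite the global nearby-fiber terms $\sum_k\chi(\Omega^k_{X_\infty})$ for $k\le p$, and the corresponding Du Bois terms on $X_0$, as Hodge-graded Euler characteristics. Exactness of \eqref{eq:mixedlongexactvanishingcycles} as mixed Hodge structures, with $\Gr_F$ exact, turns differences of these into $\sum_{k\le p}\chi(\Gr_F^k\varphi_f)=\chi(\varphi_f/F^{p+1})$.

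The main obstacle is the first step: bookkeeping the duality between Kähler extensions. The complex \eqref{eq:Kahler-complex} is not self-dual, and the twist by $\Omega_S$ introduces multiplicity terms $j\chi(\Omega^{p-j}_{X|X_\infty})$ that must cancel correctly against the Grothendieck duality of $\Omega_X^k$. If this turns out messy, I would fall back to the direct route. In that route the extra twist $\Ocal(Y_0-Y_{0,\mathrm{red}})$ contributes the difference between the upper and lower Deligne extensions on the $\neq1$ eigenspaces, and this matches exactly the shift from $F^{n-p+1}$ to the quotient by $F^{p+1}$ under the symmetry of the spectrum used in \eqref{eq:sptilde-alphapq}.
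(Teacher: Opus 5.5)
\textbf{There is a genuine gap in your Step 1.} Your reduction is fine, but the step everything hinges on is misidentified. Set $c_p=(-1)^{n+1}\lambda(\widetilde{\Omega}^{n-p}_{X/S})-\lambda(\widetilde{\Omega}^{p}_{X/S})$, the comparison of the two extensions over $S^{\circ}$ via Serre duality. Your reduction, combined with Theorem \ref{thm:mu-p-general} for $n-p$, gives $-\mu_p^+=c_p-(-1)^p\chi(F^{p+1}\varphi_f)$. So the statement is equivalent to $c_p=(-1)^p\chi(\varphi_f)$.

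This is a local invariant of the singularities. It vanishes when $X_0$ is smooth and equals $(-1)^{n+p}\mu$ for isolated singularities. It is therefore not an alternating sum of Hodge numbers of $X_\infty$, as you expect.

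Nor can it be extracted from \eqref{eq:determinant-Kahler-complex} plus termwise Grothendieck duality. The $\Omega_S$-coefficients contribute nothing, since $dt$ trivializes $\Omega_S$ on all of $S$. Moreover, the dual terms $\Omega^k_X$, $k\geq n+1-p$, do not match the terms $\Omega^k_X$, $k\leq n-p$, of $\widetilde{\Omega}^{n-p}_{X/S}$.

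The missing idea is that $\alpha\mapsto df\wedge\alpha$ splices $\widetilde{\Omega}^{n-p}_{X/S}$ and $R\mathcal{H}om(\widetilde{\Omega}^{p}_{X/S},\omega_{X/S})$ into the full Koszul complex $(\Omega^{\bullet}_X,df\wedge)$. This complex is exact off the critical locus, and $c_p$ is, up to sign, the Euler characteristic of its cohomology. For isolated singularities this cohomology is $\Omega^{n+1}_X/df\wedge\Omega^n_X$, of length $\mu$, sitting in degree $p$. In general, the identification with $\chi(\varphi_f)=\chi(X_\infty)-\chi(X_0)$ is \cite[Proposition 3.3]{cdg2}, the same input used in the proof of Theorem \ref{thm:kappaBCOVgeneral}. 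With it your proof ends in one line.

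As written, however, your closing step calls on Du Bois terms on $X_0$. Nothing in the duality route produces them once Theorem \ref{thm:mu-p-general} is used as a black box, so the bookkeeping cannot close.

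\textbf{Your last paragraph actually belongs to the direct route}, which is what the paper's analogous argument amounts to. Tensor \eqref{eq:log-complex} with $\Ocal(Y_0-Y_{0,\mathrm{red}})$, as in \eqref{eq:upperextensioniso}. The $j$-th term becomes $g^{\ast}\bigl(\Omega_S^{\otimes j}\otimes\Ocal((j+1)[0])\bigr)\otimes\Omega^{p-j}_Y(\log E)(-E)$. Now $\Omega^{k}_Y(\log E)(-E)$ is exactly the first term of Steenbrink's triangle \eqref{eq:distinguishedtriangleOmegalog}, so no duality is needed at all.

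The twists by $\Ocal([0])$ produce $\sum_{k\leq p}(-1)^{p-k}\chi(\Omega^k_{X_\infty})$, and the triangle produces the Du Bois terms. This gives $-\mu_p^+=\sum_{k=0}^{p}(-1)^{p-k}\bigl(\chi(\Omega^k_{X_\infty})-\chi(\underline{\Omega}^k_{X_{0,\mathrm{red}}})\bigr)$. This equals $(-1)^p\chi(\varphi_f/F^{p+1})$ by exactly the Hodge-graded rewriting you describe.

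Your own description of that route is different: it uses upper versus lower Deligne extensions on the $\neq 1$ eigenspaces and the symmetry of the spectrum. That is instead the mechanism of Proposition \ref{prop:mup+-mup} combined with \eqref{eq:sptilde-alphapq}. The latter is only available for isolated singularities, whereas the statement concerns arbitrary $X_0$.
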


\subsubsection{}  
Consider the decomposition $\varphi_{f}=\varphi_{f,=1}\oplus\varphi_{f,\neq 1}$ according to the action of $T_{s}$, which induces the corresponding decomposition on the vanishing cohomology, cf. \textsection\ref{subsec:spectral-numbers}. The following statement can be interpreted as a Serre duality type statement.
\begin{proposition}\label{prop:mup+-mup}
 We have the relationship
 \begin{displaymath}
    \mu_{p}^{+}-\mu_{p}=(-1)^{p-1}\chi(\Gr^{p}_F\varphi_{f, \neq 1}).
 \end{displaymath}
\end{proposition}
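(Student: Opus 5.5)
The plan is to subtract the two formulas already established and reduce the claim to a Hodge-theoretic duality statement for the vanishing cycles. By Theorem \ref{thm:mu-p-general} and Proposition \ref{prop:mu-p-upper},
\begin{displaymath}
(-1)^{p-1}\left(\mu_p^{+}-\mu_p\right)=\chi(\varphi_f/F^{p+1})-\chi(F^{n-p+1}\varphi_f).
\end{displaymath}
So it suffices to show that the right-hand side equals $\chi(\Gr_F^p\varphi_{f,\neq 1})$. Both Euler characteristics are additive with respect to the decomposition $\varphi_f=\varphi_{f,=1}\oplus\varphi_{f,\neq1}$. Writing $\chi(\Gr_F^a\varphi_{f,\bullet})=\sum_j(-1)^j\dim\Gr_F^aH^j(X_0,\varphi_{f,\bullet})$, each term decomposes as a sum of these graded pieces over the appropriate range of $a$.

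The key input is a duality for the Hodge numbers of vanishing cohomology. For every $a$ one should have
\begin{displaymath}
\chi(\Gr_F^a\varphi_{f,\neq1})=\chi(\Gr_F^{n-a}\varphi_{f,\neq1}),\qquad \chi(\Gr_F^a\varphi_{f,=1})=\chi(\Gr_F^{n+1-a}\varphi_{f,=1}).
\end{displaymath}
I would derive this from M. Saito's compatibility of duality with nearby and vanishing cycles. Start from $\DBbb(\QBbb^H_X[n+1])\simeq\QBbb^H_X[n+1](n+1)$. The functor $\varphi_{f,\neq1}$ commutes with duality up to the twist coming from $\psi_f$, which accounts for the shift by one unit in the weights. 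The unipotent part $\varphi_{f,=1}$ commutes with duality with the full twist.

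Passing to hypercohomology on the proper support $X_0$, this gives perfect pairings
\begin{displaymath}
H^j(X_0,\varphi_{f,\neq1})\otimes H^{2n-j}(X_0,\varphi_{f,\neq1})\to\QBbb(-n),\qquad H^j(X_0,\varphi_{f,=1})\otimes H^{2n-j}(X_0,\varphi_{f,=1})\to\QBbb(-n-1),
\end{displaymath}
both compatible with the Hodge filtrations. Since $(-1)^j=(-1)^{2n-j}$, the signs in the Euler characteristics match and the displayed symmetries follow. As a cross-check for the $\neq1$ part, one can instead use \eqref{eq:comparevanishinggeneral}: it identifies $\Gr_F$ of $\varphi_{f,\neq1}$ with $\Gr_F$ of $H^\bullet(X_\infty)_{\neq1}$, where the symmetry is Poincaré duality on the limit mixed Hodge structure. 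The cup product pairing is monodromy invariant, so it pairs the $\lambda$- and $\bar\lambda$-eigenspaces.

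With the symmetries in hand, the conclusion is bookkeeping. For the unipotent part,
\begin{displaymath}
\chi(F^{n-p+1}\varphi_{f,=1})=\sum_{a\geq n-p+1}\chi(\Gr_F^a\varphi_{f,=1})=\sum_{b\leq p}\chi(\Gr_F^b\varphi_{f,=1})=\chi(\varphi_{f,=1}/F^{p+1}),
\end{displaymath}
so this part cancels. For the non-unipotent part, the same reindexing with $b=n-a$ gives $\chi(F^{n-p+1}\varphi_{f,\neq1})=\chi(\varphi_{f,\neq1}/F^{p})$. Hence the difference is
\begin{displaymath}
\chi(\varphi_{f,\neq1}/F^{p+1})-\chi(\varphi_{f,\neq1}/F^{p})=\chi(\Gr_F^p\varphi_{f,\neq1}),
\end{displaymath}
which is the claim.

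The main obstacle is the duality step: making precise the Tate twists in Saito's duality for $\varphi_{f,=1}$ versus $\varphi_{f,\neq1}$, and checking they are compatible with the indexing convention $F^p=F_{-p}$ used here. I would first test the normalization in the isolated hypersurface case. There the symmetry $\lambda\mapsto n+1-\lambda$ of the spectrum, as used in Proposition \ref{prop:steenbrinkstyleineq}, gives exactly $s_a^{\neq1}=s_{n-a}^{\neq1}$ and $s_a^{=1}=s_{n+1-a}^{=1}$. This confirms the twists $n$ and $n+1$ respectively.
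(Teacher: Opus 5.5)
Your argument is correct, but it does not follow the paper's route.

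\textbf{The paper's route.} The paper uses neither Proposition \ref{prop:mu-p-upper} nor any duality on $\varphi_f$. It compares the two extensions directly through the Deligne extensions of the Hodge bundles. It notes that $\det {}^u\mathcal{H}^{p,j-p}-\det {}^\ell\mathcal{H}^{p,j-p}=\dim \Gr_F^pH^j(X_\infty)_{\neq 1}$, transports this to vanishing cohomology by \eqref{eq:comparevanishinggeneral}, and takes the alternating sum over $j$.

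\textbf{Your route.} You subtract the closed formulas of Theorem \ref{thm:mu-p-general} and Proposition \ref{prop:mu-p-upper}. You then reduce to the symmetries $\chi(\Gr_F^a\varphi_{f,\neq 1})=\chi(\Gr_F^{n-a}\varphi_{f,\neq 1})$ and $\chi(\Gr_F^a\varphi_{f,=1})=\chi(\Gr_F^{n+1-a}\varphi_{f,=1})$.
\begin{itemize}
\item Your reindexing is right, and the twists you assert are the correct ones.
\item The twists are in fact forced by weights. The nearby cycle functor lowers the weight of $\QBbb^H_X[n+1]$ by one, while the unipotent vanishing cycle functor preserves it. In Saito's theory this reads $\mathbb{D}\psi_f\simeq\psi_f\mathbb{D}(-1)$, while unipotent vanishing cycles commute with $\mathbb{D}$ without twist.
\item So the step you flag as the main obstacle is not a real one, and your calibration against the isolated case is consistent with this.
\end{itemize}

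\textbf{What each buys.} Your route gives a genuine explanation of the paper's ``Serre duality type'' remark. The unipotent contributions to $\mu_p$ and $\mu_p^+$ cancel by the self-duality of $\varphi_{f,=1}$ centered at $n+1$. Only the shift of the center to $n$ for $\varphi_{f,\neq 1}$ survives.

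The cost is heavier input and an extra dependency:
\begin{itemize}
\item You need Saito's compatibility of duality with unipotent vanishing cycles. The $\neq 1$ symmetry, as you note, follows elementarily from Poincar\'e duality on the limit mixed Hodge structure.
\item Your argument relies on Proposition \ref{prop:mu-p-upper}, whose proof the paper only sketches as ``analogous''.
\end{itemize}
The paper's argument is shorter and uses only the eigenvalue description of Deligne extensions together with \eqref{eq:comparevanishinggeneral}. Read in reverse, the paper's proposition together with Theorem \ref{thm:mu-p-general} and your duality symmetry actually yields Proposition \ref{prop:mu-p-upper}. So, given duality, the three statements are mutually equivalent in pairs.
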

\begin{proof}
    The upper and lower extensions of the Hodge bundles $\Hcal^{p,q}$ on $S^{\circ}$, denoted by ${^u}\Hcal^{p,q}$ and ${^\ell}\Hcal^{p,q}$ respectively, can be compared with the semi-stable reduction case (cf. \cite[Section 2]{cdg2}). In each case, the comparison is controlled by the non-trivial semi-simple monodromy eigenvalues. More precisely, one finds that
    \begin{displaymath}
        \det( {^u}\mathcal{H}^{p,j-p})-\det ({^\ell}\mathcal{H}^{p,j-p})=\dim \Gr_F^p H^{j}(X_\infty)_{\neq 1}\overset{\eqref{eq:comparevanishinggeneral}}{=}\dim \Gr_F^p H^{j}(\varphi_f )_{\neq 1}.
    \end{displaymath}
    This means that,  
    \begin{displaymath}
        \begin{split}{^{u}}\lambda(\Omega_{X^{\circ}/S^{\circ}}^p) - \lambda(\Omega_{Y/S}^p(\log)) &= \sum_{j} (-1)^{j-p} \left(\det( {^u}\mathcal{H}^{p,j-p})-\det ({^\ell}\mathcal{H}^{p,j-p})\right) \\
        & = (-1)^p \sum_{j} (-1)^j \dim \Gr_F^p H^{j}(\varphi_f )_{\neq 1}\\
        & =(-1)^{p}\chi(\Gr^{p}_F\varphi_{f,\neq 1}).
        \end{split}
    \end{displaymath}
\end{proof}

\subsection{Case of isolated singularities}\label{subsec:isolated-singularities}
We specialize, and slightly generalize, the previous section to the context of degenerations with isolated singularities.

\subsubsection{} Suppose now that $f\colon X\to S$ has at most isolated singularities in $X_{0}$. Recall that, for consistency with some references, so far we assumed that $f$ is the restriction of a morphism of algebraic varieties. At least in the isolated singularity setting, this can be easily relaxed.

\begin{lemma}\label{lemma:isolated-removal-algebraicity}
The results in \textsection\ref{subsec:comparisons} hold for projective morphisms of complex manifolds, with isolated singularities, not necessarily arising as the restriction of a morphism over an algebraic curve.
\end{lemma}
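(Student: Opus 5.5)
The quantities $\mu_p$ and $\mu_p^+$ are defined by comparing line bundles on $S$ that are canonically isomorphic over $S^{\circ}$. My plan is therefore to localize everything at the finitely many singular points of $X_0$, and then reduce to the algebraic case by an algebraization of the germs. The only place the algebraicity hypothesis entered was Steenbrink's distinguished triangle \eqref{eq:distinguishedtriangleOmegalog}, taken from \cite[Proposition 3.3]{SteenbrinkVanishingThm}. I will check two things. First, both sides of the formulas in Theorem \ref{thm:mu-p-general}, Proposition \ref{prop:mu-p-upper} and Proposition \ref{prop:mup+-mup} are sums of local contributions at the singular points. Second, each local contribution depends only on the germ of $f$ there.

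For the right-hand sides this is immediate. When the singularities are isolated, $\varphi_f$ is supported at the points $x_1,\dots,x_r$, and its hypercohomology with the Hodge filtration is the direct sum of the reduced Milnor cohomologies $H^n(M_{f,x_i})$ with their mixed Hodge structures. These depend only on the germs, by the globalization discussion following \eqref{eq:mixedlongexactvanishingcycles}.

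For the left-hand sides, I would choose the log resolution $\pi\colon Y\to X$ to be an isomorphism away from small neighborhoods $U_i$ of the $x_i$, except along the strict transform of $X_0$; there one only needs to normalize/resolve $X_{0,\mathrm{red}}$, which is smooth away from the $x_i$. The comparison \eqref{eq:compare-log-and-kahler} expresses $-\mu_p$ through differences $\lambda(\Omega^k_Y(\log E))-\lambda(\Omega^k_X)$ and Euler characteristics. The natural route is to observe that the cone of $\Omega^k_X\to R\pi_\ast\Omega^k_Y(\log E)$, corrected by the smooth part of the divisor, consists of sheaves whose relevant part is supported at the $x_i$. Equivalently, the comparison isomorphism between the Kähler and logarithmic extensions is an isomorphism away from the singular points. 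The divisor of this comparison at $0$ is then a sum of lengths of skyscraper contributions, one for each $x_i$, each computed from a neighborhood $U_i$ alone.

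To compute each local term, I would use the finite determinacy of isolated singularities: $f$ near $x_i$ is right-equivalent to a polynomial germ $g_i$. By Brieskorn's globalization \cite[Section 1.1]{Brieskorn}, $g_i$ can be embedded in a projective algebraic family $\bar g_i\colon \bar X_i\to \bar S_i$ over an algebraic curve, whose only singular point in the special fiber is the given germ. In that family, Theorem \ref{thm:mu-p-general} holds as proven. Its local contribution at the singular point must therefore equal $(-1)^{p-1}\sum_j(-1)^j\dim F^{n-p+1}H^j$ of the Milnor cohomology of $g_i$, because all other contributions vanish. Transplanting this back to $f$ and summing over $i$ gives the claim. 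The same argument applies verbatim to $\mu_p^+$ and to Proposition \ref{prop:mup+-mup}.

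The main obstacle is making rigorous the statement that the comparison of extensions localizes, and that its local term depends only on the analytic germ. For $X_0$ reduced with isolated singularities, I would first try a direct route. Here $X_0$ is a normal-type hypersurface, and one can use Steenbrink's triangle locally on $X$: its analytic version holds for isolated singularities via local cohomology computations on a good Stein neighborhood. One then argues with local cohomology supported at $x_i$ rather than through global algebraic duality.
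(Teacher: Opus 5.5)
Your plan is the same as the paper's. You localize the comparison at the singular points, then compute each local term by transplanting the germ, via Brieskorn's globalization, into an algebraic family with a single singular point, where Theorem \ref{thm:mu-p-general} is already proven. The paper makes your localization step precise with the cone $\mathcal{C}_f$ of $\widetilde{\Omega}^{p}_{X/S}\to R\pi_{\ast}\widetilde{\Omega}^{p}_{Y/S}(\log)$, built from the full complexes \eqref{eq:Kahler-complex} and \eqref{eq:log-complex}: unlike the cone of $\Omega^k_X\to R\pi_\ast\Omega^k_Y(\log E)$, it is genuinely supported at the singular points, giving $\mu_p=-\chi(\mathcal{C}_f)$; it also chooses the log resolution of the algebraic model to agree with $\pi$ near the point, and your local-cohomology route in the last paragraph is not needed.
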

\begin{proof}
For the sheaves of holomorphic differentials, the K\"ahler extension and the logarithmic extensions of the determinant of the cohomology are similarly defined. We need to comment on the comparison. For concreteness, we explain how to deal with the analog of Theorem \ref{thm:mu-p-general}. For simplicity, we suppose that $X_{0}$ has a unique singular point $x$. We adopt the same notation as in the proof of that statement. 

Denote the complex \eqref{eq:log-complex} by $\widetilde{\Omega}_{Y/S}^{p}(\log)$. Pulling back differential forms induces a canonical morphism of complexes
\begin{displaymath}
    \widetilde{\Omega}_{X/S}^{p}\to R\pi_{\ast}\widetilde{\Omega}_{Y/S}^{p}(\log),
\end{displaymath}
where $\pi\colon Y\to X$ is the log-resolution of the special fiber. Let $\Ccal_{f}$ be the cone of this map. This complex is supported on the singular point $x\in X_{0}$, and $\mu_{p}=-\chi(\Ccal_{f})$. Since $\Ccal_{f}$ is supported on $x$, if $U$ is an open neighborhood of $x$ in $X$, then $\mu_{p}=-\chi(\Ccal_{f\mid U})$. 

Since $x$ is an isolated singularity, by Brieskorn's globalization argument \cite[Section 1.1]{Brieskorn} we can find a flat projective morphism of non-singular algebraic varieties $g\colon Z\to T$, such that:
\begin{enumerate}
    \item $T$ is a Zariski open neighborhood of $0$ in $\ABbb^{1}_{\CBbb}$.
    \item $g$ is smooth over $T\setminus\lbrace 0\rbrace$, and $Z_{0}$ has a unique singular point $z$.
    \item There are analytic open neighborhoods $V\subset Z$ of $z$, $U\subset X$ of $x$, and a biholomorphism $\psi\colon V\to U$, such that $f\circ\psi=g$. 
\end{enumerate}
Since the log-resolution $\pi$ is determined by a sequence of blow-ups over $x$, we can find a log-resolution of $Z_{0}$, say $\pi^{\prime}\colon W\to Z$, such that $\pi^{\prime}_{\mid V}$ gets identified with $\pi_{\mid U}$ via $\psi$. Then, if we construct $\Ccal_{g}$ analogously to $\Ccal_{f}$, there is a natural quasi-isomorphism $\psi^{\ast}\Ccal_{f\mid U}\to \Ccal_{g\mid V}$. Consequently, $\mu_{p}=-\chi(\Ccal_{g\mid V})$. Now, the latter equals $(-1)^{p-1} \chi(F^{n-p+1}\varphi_g)$, by Theorem \ref{thm:mu-p-general}. The isomorphism $\psi$ also induces a quasi-isomorphism $\psi^{\ast}\varphi_{f\mid U}\to\varphi_{g\mid V}$, and we conclude that $\mu_{p}=(-1)^{p-1} \chi(F^{n-p+1}\varphi_f)$. 
\end{proof}

\section{Invariants of singularities and asymptotics of analytic torsion}\label{section:torsion-differentials}
In this section we extend the discussion of our previous work \cite{DURF1} to the case of analytic torsion of holomorphic $p$-forms. Concretely, we relate the dominant terms of the asymptotics with the invariants of singularities introduced in Section \ref{section:Hodgeinvariants}. 

\subsection{Holomorphic analytic torsion}
We begin by briefly recalling the definition of holomorphic analytic torsion. We refer to \cite{BGS1} and \cite[Chapter IV, Section 3]{Soule:lectures} for further details.

\subsubsection{} Let $(X,\omega)$ be a compact K\"ahler manifold of dimension $n$, and $(E,h)$ a holomorphic Hermitian vector bundle on $X$. The holomorphic analytic torsion associated to this data is the weighted alternating product
\begin{equation}\label{def:analytictorsion}
    \tau(X,\omega,E,h)=\prod_{q=0}^{n}(\det\Delta^{0,q}_{\ov{\partial}})^{(-1)^{q}q}\ \in\RBbb_{>0},
\end{equation}
where $\Delta^{0,q}_{\ov{\partial}}$ is the Dolbeault Laplacian acting on $A^{0,q}(X,E)$, depending on the chosen Hermitian structures, 
\begin{displaymath} \det\Delta^{0,q}_{\ov{\partial}} = \exp\left(-\zeta_{0,q}'(0)\right)
\end{displaymath}
is its zeta-regularized determinant, where 
\begin{equation}\label{def:zetafunction}
    \zeta_{0,q}(z)=\sum_{\lambda \in \Spec'(\Delta_{\overline{\partial}}^{0,q})} \frac{1}{\lambda^z}
\end{equation}
is the spectral zeta function over non-zero eigenvalues of the $(0,q)$-Laplacian. If the K\"ahler metric $\omega$ and the Hermitian metric $h$ are clear from the context, we will abbreviate the notation by writing $\tau(X,E)$ instead of $\tau(X,\omega,E,h)$.

The Quillen metric on the determinant of the cohomology $\lambda(E)$ of $E$ is defined as
 \begin{equation}\label{eq:quillenmetricdef}
    h_{Q}=\tau(X,E)\cdot h_{L^{2}},
\end{equation}
where $h_{L^{2}}$ is the $L^2$ metric from Hodge theory. The norms associated to $h_{L^{2}}$ and $h_{Q}$ are denoted by $\|\cdot\|_{L^{2}}$ and $\|\cdot\|_{Q}$, respectively.

\subsubsection{} These constructions can be put in families. For this, suppose now that $(X,\omega)$ is a K\"ahler manifold, and $f\colon X\to S$ is a proper submersion of complex manifolds with $n$-dimensional fibers. For a Hermitian vector bundle $(E,h)$ on $X$, the fiberwise Quillen metric, computed with respect to the fiber restrictions of $\omega$ and $h$, defines a smooth Hermitian metric on $\lambda(E)$. In general, neither the $L^{2}$-metric, nor the analytic torsion, depend smoothly on $t\in S$. However, if the $L^{2}$-metric is smooth, then $t\mapsto\tau(X_{t},E|_{X_{t}})$ is a smooth function on $S$. This arises when all the higher direct images $R^{q}f_{\ast}E$ are locally free, as for instance if $E=\Omega_{X/S}^{p}$, by Hodge theory. Hence, if we endow $\Omega_{X/S}^{p}$ with the Hermitian metric induced by $\omega$, we conclude that the function
\begin{equation}\label{eq:torsion-anal-p}
    t\mapsto\tau(X_{t},\Omega_{X_{t}}^{p}),\quad t\in S,
\end{equation}
is $\Ccal^{\infty}$ on $S$. 

\subsection{Singularities of the analytic torsion}
For the sheaves of holomorphic differentials, we describe the asymptotic behavior of the holomorphic analytic torsion, for projective degenerations with isolated singularities. The main tools are Yoshikawa's theorem on the singularities of the Quillen metric, and refinements of Schmid's asymptotics of $L^2$-metrics. 

\subsubsection{}  

The setting is as follows. Let $f\colon X\to \DBbb$ be a flat projective morphism of complex manifolds. Let $n$ be the relative dimension of $f$. We assume that $f$ is a submersion over $\DBbb^{\times}$, and that $X_0$ has isolated singularities. We write $X^\times = f^{-1}(\DBbb^\times )$. Let $\omega$ be a fixed K\"ahler metric on $X$. For every integer $p\geq 0$, we endow the vector bundle $\Omega_{X^{\times}/\DBbb^{\times}}^{p}$ with the Hermitian metric induced by $\omega$.  Associated to these choices, we consider the Quillen metric on $\lambda(\Omega^{p}_{X^{\times}/\DBbb^{\times}})$. We wish to describe the singularity of this metric close to $0$, with respect to the Kähler extension.

\subsubsection{}\label{subsubsec:Kahler-extension} 
We first treat the singularities of the Quillen metric. Before the statement, we recall the definition of the Eulerian numbers \cite[Section 1.3]{Eulerian}: if $k,n\geq 0$ are integers, then we let
\begin{displaymath}
    \left\langle\begin{array}{c}n\\ k\end{array}\right\rangle=\text{number of permutations of } \lbrace 1,\ldots,n\rbrace\ \text{with}\ k\ \text{descents},
\end{displaymath}
where, for a permutation $\tau$, a descent is an integer $1\leq i<n$ with $\tau(i)>\tau(i+1)$. We shall need the following equivalent representation:
\begin{equation}\label{eq:expansion-Euler-numer}
    \left\langle\begin{array}{c}n\\ k\end{array}\right\rangle=\sum_{j=0}^{k}(-1)^{j}\binom{n+1}{j}(k-j+1)^{n}.
\end{equation}
We refer to \cite[Corollary 1.3]{Eulerian} for a proof. Finally, for integers $p,n\geq 0$, we introduce the following sum of consecutive Eulerian numbers:
\begin{equation}\label{eq:Yoshikawa-coefficient}    
    \Upsilon(n,p)=\sum_{k=0}^{p}\Euler{n}{k}.
\end{equation}
In combinatorial terms, this counts the number of permutations on $\lbrace 1,\ldots, n\rbrace$ having at most $p$ descents. 

We use the conventions $\Euler{0}{0}=1$, $\Euler{n}{k}=0$ for $n\geq 1$ and $k\notin\{0,\ldots,n-1\}$, and $\Upsilon(n,p)=0$ for $p<0$. For $n\geq 1$, the symmetry
\begin{equation}\label{eq:eulernumberssymmetry}
    \Euler{n}{k}=\Euler{n}{n-k-1}
\end{equation}
yields the relationship
\begin{equation}\label{eq:Eulerrelation}
    \Upsilon(n,p)+\Upsilon(n,n-p-2)=n! 
\end{equation}

\subsubsection{} With the above in mind, we have:
\begin{proposition}\label{prop:asymp-Quillen}
Let the notation and assumptions be as above, in particular that $X_0$ has isolated singularities. Let $\sigma$ be a nowhere-vanishing local section of the K\"ahler extension $\lambda(\widetilde{\Omega}_{X/\DBbb}^p)$. Then
    \begin{displaymath}
        \log\|\sigma\|_{Q}^{2}=(-1)^{n-p}\frac{\Upsilon(n+2,p)}{(n+2)!}\mu\log|t|^{2}+\hbox{continuous},
    \end{displaymath}
    as $t\to 0$, where $\mu$ is the total Milnor number of the singular fiber $X_{0}$.
\end{proposition}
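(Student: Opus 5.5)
The plan is to follow Yoshikawa's strategy from \cite{yoshikawa2}, extended as in \cite{FLY}. The Kähler extension $\lambda(\widetilde{\Omega}^{p}_{X/\DBbb})$ is, by \eqref{eq:determinant-Kahler-complex}, an alternating tensor product of determinants $\lambda(f^{\ast}\Omega_{\DBbb}^{\otimes j}\otimes\Omega_{X}^{p-j})$ of vector bundles defined on all of $X$. On $X^{\times}$ this identification is compatible with the exact sequences
\[
0\to f^{\ast}\Omega_{\DBbb}\otimes\Omega^{k-1}_{X^\times/\DBbb^\times}\to\Omega^{k}_{X^\times}\to\Omega^{k}_{X^\times/\DBbb^\times}\to 0 .
\]
I will therefore compare Quillen metrics in two stages.

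\textbf{Step 1: reduce to metrics on $X$.} Endow each $\Omega_{X}^{p-j}$ and $f^{\ast}\Omega_{\DBbb}$ with the metrics induced by $\omega$ (and the flat metric on $\DBbb$). Then $\lambda(\widetilde{\Omega}^{p}_{X/\DBbb})$ carries a Quillen metric that is an alternating tensor product of Quillen metrics of restrictions of bundles on $X$ to the fibers. By Bismut's immersion-type result, or more precisely Yoshikawa's theorem on the singularities of Quillen metrics for restrictions of global Hermitian bundles to degenerating fibers, each such Quillen metric is continuous up to a term of the form $c\log|t|^2$. The coefficient $c$ is given by a localized integral $\int_{X_0}$ of a Bott--Chern type class supported at the singular points. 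Through the Gauss map, this class is expressed via $\mathrm{Td}$ and $\mathrm{ch}$ evaluated on the Nash blow-up. For restrictions of global bundles, this coefficient should in fact vanish.

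\textbf{Step 2: the Bott--Chern comparison.} Next I compare, on $X^\times$, the metric induced from the exact sequences above with the quotient metric on $\Omega^{p}_{X^\times/\DBbb^\times}$. The Bott--Chern secondary classes of these sequences, pushed forward fiberwise, give the discrepancy. Yoshikawa's formula writes their singular part as
\[
\Big(\int_{\widetilde{X}_0\cap\text{exceptional}} \text{(characteristic classes of the universal quotient bundle)}\Big)\log|t|^2 ,
\]
computed on the Gauss map graph. Near each isolated singular point this reduces to a universal computation on the projectivized conormal space: the relevant class is an expression in the hyperplane class $h$ on $\PBbb^{n}$ raised to the power $n$, and its degree multiplies the local Milnor number. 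This works because the Milnor number is the local degree of the Gauss map, i.e.\ the multiplicity of the Jacobian ideal.

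\textbf{Step 3: the combinatorics.} The coefficient in Step 2 is the degree of a universal polynomial in $h$. Summing over $j$ in the Koszul-type resolution, one gets a term of the shape
\[
\Big[\frac{\mathrm{Td}(\ldots)\,(\text{alternating }\textstyle\sum_j \mathrm{ch}\,\wedge^{p-j}Q\ldots)}{\ldots}\Big]_{n}
\]
with $Q$ the universal quotient on $\PBbb^n$. The task is to identify this with $(-1)^{n-p}\Upsilon(n+2,p)/(n+2)!$. I would evaluate it by writing $\mathrm{ch}(\wedge^k Q)$ via the Euler sequence as elementary symmetric functions in exponentials. The generating function then becomes a residue of
\[
\frac{\sum_{k\le p}(-1)^{k}\binom{n+2}{k}e^{(p-k+1)x}}{(e^x-1)^{n+2}}\cdot x\ldots ,
\]
whose coefficient yields $\sum_{k\le p}\sum_j(-1)^j\binom{n+3}{j}(k-j+1)^{n+2}/(n+2)!$. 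By \eqref{eq:expansion-Euler-numer} this equals $\Upsilon(n+2,p)/(n+2)!$. The case $p=0$ must reproduce \eqref{introeq:log-behaviortorsionp=0}, which serves as a check on normalizations.

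The main obstacle is Step 3: correctly extracting Yoshikawa's localized coefficient for every $p$, including signs and the exact power of the degree, and then matching the resulting residue with the Eulerian sum. Steps 1 and 2 are essentially citations of \cite{yoshikawa2}, with the isolated-singularity case as in \cite{FLY, DURF1}.
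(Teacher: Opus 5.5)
Your Step~1 contains a false claim. Yoshikawa's theorem does \emph{not} give a vanishing coefficient for the fiberwise Quillen metric of a global Hermitian bundle $\xi$ on $X$. The singularity comes from the degeneration of $T_{X_t}$ with the metric induced by $\omega$, which is what the Gauss map encodes, and not from $\xi$. For isolated singularities $\mathrm{ch}(\xi)$ enters only through its rank, so the coefficient is a universal nonzero multiple of $\operatorname{rk}(\xi)\,\mu$. The case $p=0$ refutes your claim directly. There the K\"ahler extension is just $\lambda(\mathcal{O}_X)$, no exact sequence enters your Step~2, and yet the coefficient is $(-1)^{n}\mu/(n+2)!\neq 0$. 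Attributing the whole singularity to the Bott--Chern classes of Step~2 would therefore give $0$ at $p=0$ and fail the very normalization check you propose. A pure rank count over the global pieces is also wrong for $p\geq 1$. At $p=1$ it gives $n(-1)^n\mu/(n+2)!$ instead of $(-1)^{n}(n+2-2^{n+2})\mu/(n+2)!$, so there is a genuine extra contribution that has to be computed.

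Your Step~3, which you rightly call the main obstacle, is asserted rather than derived. The residue contains ellipses, and its weights $\binom{n+2}{k}$ are not the ranks $\binom{n+1}{j}$ of the bundles $\Omega^j_X$ in the K\"ahler complex. The final double sum is read off backwards from \eqref{eq:expansion-Euler-numer}. The missing input is the explicit coefficient, which the paper does not rederive. For $f$ extending over an algebraic curve, it takes Yoshikawa's theorem \cite{yoshikawa} as evaluated in \cite[Theorem 5.11]{FLY}:
\[
\Big(\int_E x^n\Big)\sum_{j=0}^{p}(-1)^{p-j}\binom{n+1}{j}\cdot\big(\text{coefficient of } x^{n+2} \text{ in } e^{-(p-j+1)x}-e^{-(p-j)x}\big),\qquad x=c_1(\mathcal{O}(-E)|_E).
\]
Here $E$ is the exceptional divisor of the blow-up of the Jacobian ideal, so $\int_E x^n$ is the multiplicity of that ideal, namely $\mu$. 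The factor $e^{-(p-j)x}=\mathrm{ch}(\mathcal{O}((p-j)E)|_E)$ is exactly what a rank count misses. The expression equals $(-1)^{n-p}\mu\,A(n+2,p)/(n+2)!$, where
\[
A(n+2,p)=\sum_{j\le p}(-1)^j\binom{n+1}{j}\big((p-j+1)^{n+2}-(p-j)^{n+2}\big).
\]
Two applications of Pascal's rule give $A(n+2,k)-A(n+2,k-1)=\Euler{n+2}{k}$, hence $A(n+2,p)=\Upsilon(n+2,p)$.

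Finally, your plan does not separate the algebraic and non-algebraic cases. The localized formula is used only when $f$ comes from an algebraic curve. In general the paper uses \cite{yoshikawa2} to get a coefficient of the form $C(n,p)\mu$ with $C(n,p)$ universal. It then fixes $C(n,p)$ by testing on an algebraic family with a single ordinary double point, obtained by Brieskorn's globalization \cite{Brieskorn}.
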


\begin{proof}

Suppose first that $f$ arises as the restriction of a projective morphism over an algebraic curve. Then one can apply Yoshikawa's theorem on the singularity of Quillen metrics in \cite{yoshikawa}. Reasoning as in \cite[Theorem 5.11]{FLY} one finds that the logarithmic term has a coefficient as below:
\begin{equation}\label{eq:logOmegap}
    \left(\int_{E}c_{1}(\Ocal(-E)|_{E})^{n}\right)\cdot \sum_{j=0}^p (-1)^{p-j} {n+1 \choose j}\cdot \text{coefficient of}\ X^{n+2}\ \text{in}\  \left(e^{-(p-j+1)X}-e^{-(p-j)X}\right).
\end{equation}
Here $E$ is the exceptional divisor of the blowup of the Jacobian ideal of $f$. The degree of $c_{1}(\Ocal(-E)|_{E})^{n}$ over $E$ is the multiplicity of the Jacobian ideal $\Jcal$ of $f$, which is exactly the sum of Milnor numbers. See \cite[Section 4.3]{Fulton}, and in particular Example 4.3.1 and Example 4.3.5 (c). Then the expression \eqref{eq:logOmegap} reduces to  \begin{equation}\label{eq:complicated-yoshikawa-coefficient}
    (-1)^{n-p}\frac{1}{(n+2)!}\mu\sum_{j=0}^{p}(-1)^{j}\binom{n+1}{j}\left((p-j+1)^{n+2}-(p-j)^{n+2}\right).
\end{equation}
Define
\begin{displaymath}
    A(n+2,p)=\sum_{j=0}^{p}(-1)^{j}\binom{n+1}{j}\left((p-j+1)^{n+2}-(p-j)^{n+2}\right)
\end{displaymath}
Using Pascal's identity for binomial coefficients and using the expression for the Eulerian numbers in \eqref{eq:expansion-Euler-numer}, one straightforwardly verifies that
\begin{displaymath}
     \left\langle\begin{array}{c}n+2\\ k\end{array}\right\rangle = A(n+2,k)-A(n+2,k-1).
\end{displaymath}
Taking the sum over $k=0,\ldots, p$ shows that in fact  $A(n+2,p) = \Upsilon(n+2,p)$.

If $f$ does not arise as the restriction of a family over an algebraic base, we can still apply a version of Yoshikawa's theorem in the case of isolated singularities \cite[Main Theorem]{yoshikawa2}, in which case the algebraicity condition is not needed. One can then infer that the logarithm of the Quillen metric has the stated logarithmic asymptotics as desired, with a coefficient of the form $C(n,p)\cdot\mu$, where $C(n,p)$ is a universal constant depending only on $n$ and $p$. It is then enough to evaluate $C(n,p)$ by testing on a family of relative dimension $n$, extending over an algebraic base, and with a unique ordinary double point in $X_0$. Such families exist, by Brieskorn's globalization argument \cite[Section 1.1]{Brieskorn}. The discussion above applies to this family, and shows that $C(n,p)=(-1)^{n-p}\Upsilon(n+2,p)/(n+2)!$.

\end{proof}
We notice that the combinatorial interpretation of $\Upsilon(n+2,p)$ shows this number is a strictly positive integer. This is not clear from the equivalent sum appearing in \eqref{eq:complicated-yoshikawa-coefficient}, obtained in \cite[Theorem 5.11]{FLY}.

\subsubsection{}

In this subsection we will recall the asymptotics of the $L^2$-norm of a trivializing section $\sigma$ of $\lambda(\widetilde{\Omega}_{X/\DBbb}^p)$. This follows the discussion of the analogous results in \cite{cdg2}. We do not need to suppose that $X_0$ has isolated singularities. We adopt the notation in Section \ref{section:Hodgeinvariants} and Section \ref{sec:comparison-Kahler}. 

We suppose that $X$ is endowed with a K\"ahler form. If $\sigma'$ is a trivializing section of $\lambda(\Omega_{Y/S}^{p}(\log))$, the result in \cite[Theorem 4.4]{cdg2} implies that 
\begin{equation}
\log\|\sigma'\|^2_{L^2}=\alpha^{p}\log|t|^2+\beta^{p}\log\log|t|^{-1} + O(1).
\end{equation}
Here $\alpha^p$ is defined in terms of spectral numbers as in \eqref{eq:alphap}, and 
\begin{equation}\label{eq:betap}
    \beta^{p}=\sum_{q=0}^n (-1)^q \beta^{p,q}, \quad \beta^{p,q}=\sum_{r=-(p+q)}^{p+q} r \dim \Gr^p_F \Gr^W_{p+q+r} H^{p+q}_{\lim}(X_\infty).
\end{equation}
This is an extension of the asymptotics of the $L^2$ norm studied by Schmid \cite{schmid}, which incorporates the contribution from the monodromy eigenvalues. We note that Schmid's results in \cite{schmid}, and consequently \cite[Theorem 4.4]{cdg2}, are stated under an integral polarization condition. This restriction has been relaxed by Sabbah and Schnell \cite{Sabbah-Schnell}, who tackle the general case of variations of polarized \emph{complex} Hodge structures.

On the other hand, we have that $\sigma:=\sigma' \cdot t^{-\mu_p}$ trivializes $\lambda(\widetilde{\Omega}_{X/\DBbb}^{p})$ by \eqref{eq:def-mup}. This purely geometric $\mu_p$ was computed in Theorem \ref{thm:mu-p-general}. We have thus proven the following:
\begin{proposition}\label{prop:L2asymptotic}
    Let $f\colon X \to \DBbb $ be as above, hence not necessarily with $X_0$ having isolated singularities. Assume that $f$ is the restriction of a morphism over an algebraic base. If $\sigma$ is a trivialization of $\lambda(\widetilde{\Omega}_{X/\DBbb}^p)$, then 
    \begin{displaymath}
        \log\|\sigma\|_{L^2}^2=\left(\alpha^{p}+(-1)^{p} \chi(F^{n-p+1}\varphi_f)\right) \log|t|^2+\beta^{p} \log \log|t|^{-1} + O(1),
    \end{displaymath}
    as $t\to 0$. If the singularities in $X_0$ are isolated, this holds without the algebraicity assumption, cf. Lemma \ref{lemma:isolated-removal-algebraicity}.
\end{proposition}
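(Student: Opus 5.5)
The plan is to deduce the statement from two inputs: the $L^2$-asymptotics of a trivializing section $\sigma'$ of the logarithmic extension $\lambda(\Omega^{p}_{Y/S}(\log))$, and the comparison integer $\mu_p$ between the Kähler and logarithmic extensions.

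\emph{Step 1 (asymptotics of the logarithmic section).} Choose a log-resolution $g\colon Y\to S$ as in Section \ref{sec:comparison-Kahler}, and let $\sigma'$ be a nowhere-vanishing section of $\lambda(\Omega^{p}_{Y/S}(\log))$. The determinant of cohomology decomposes as the alternating sum $\sum_q(-1)^q\det {^\ell}\Hcal^{p,q}$ of the lower Deligne extensions of the Hodge bundles. By Hodge theory on the smooth fibers, the $L^2$-metric on $\lambda(\Omega^p_{X^\times/\DBbb^\times})$ is the alternating product of the Hodge metrics on $\det\Hcal^{p,q}$. We then invoke \cite[Theorem 4.4]{cdg2}, which refines Schmid's norm estimates with the eigenvalue contributions, and use Sabbah--Schnell to remove the integral polarization hypothesis. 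On each $\det{^\ell}\Hcal^{p,q}$ this gives a logarithmic exponent $\alpha^{p,q}$, coming from the lower-branch logarithm of $T_s$, and a $\log\log$ exponent $\beta^{p,q}$, coming from the weight filtration of the limit mixed Hodge structure. Taking the alternating sum over $q$ produces $\alpha^p\log|t|^2+\beta^p\log\log|t|^{-1}+O(1)$, in the notation of \eqref{eq:alphap} and \eqref{eq:betap}.

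\emph{Step 2 (passing to the Kähler extension).} By definition \eqref{eq:def-mup}, the section $\sigma=\sigma' t^{-\mu_p}$ trivializes $\lambda(\widetilde{\Omega}^p_{X/\DBbb})$. Since the two line bundles agree over $\DBbb^\times$, we have
\[
\log\|\sigma\|^2_{L^2}=\log\|\sigma'\|^2_{L^2}-\mu_p\log|t|^2 .
\]
Theorem \ref{thm:mu-p-general} gives $-\mu_p=(-1)^p\chi(F^{n-p+1}\varphi_f)$. Substituting this into the identity yields the stated formula. The only things to check are the sign conventions relating $L+k$ to $\sigma t^{-k}$; these match the convention fixed after \eqref{eq:def-mup}.

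\emph{Step 3 (removing algebraicity for isolated singularities).} The algebraicity assumption enters only through Theorem \ref{thm:mu-p-general}. The $L^2$-asymptotics are purely Hodge-theoretic and hold for any projective degeneration. When $X_0$ has isolated singularities, Lemma \ref{lemma:isolated-removal-algebraicity} shows that the formula for $\mu_p$ still holds, via Brieskorn globalization, so the same conclusion follows.

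\emph{Main obstacle.} I expect the delicate point to be Step 1, not the bookkeeping in Steps 2 and 3. One has to make sure the determinant of cohomology of the logarithmic complex really identifies with $\sum_q(-1)^q\det{^\ell}\Hcal^{p,q}$, compatibly with the $L^2$-metrics, so that Schmid-type estimates can be applied factor by factor. This needs local freeness and base change for $R^qg_\ast\Omega^p_{Y/S}(\log)$ (Steenbrink). It also needs the identification of these sheaves with the lower extensions in the non-unipotent case, which is where the spectral shift $\alpha^{p,q}$ enters; for that I would rely on \cite[Section 2]{cdg2}.
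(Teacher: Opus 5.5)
Your proposal is correct and follows the paper's argument. The paper likewise takes the asymptotics $\log\|\sigma'\|^2_{L^2}=\alpha^p\log|t|^2+\beta^p\log\log|t|^{-1}+O(1)$ for a trivialization $\sigma'$ of $\lambda(\Omega^p_{Y/S}(\log))$ directly from \cite[Theorem 4.4]{cdg2}, with Sabbah--Schnell removing the polarization hypothesis; it then sets $\sigma=\sigma' t^{-\mu_p}$, inserts $\mu_p$ from Theorem \ref{thm:mu-p-general}, and handles the isolated non-algebraic case via Lemma \ref{lemma:isolated-removal-algebraicity}, so the Step 1 details you flag are exactly what that citation packages.
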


\subsubsection{}

 We now describe the asymptotic behavior of the analytic torsion of $p$-differentials for degenerations with isolated singularities. Recall the notation in \textsection\ref{subsubsec:invariants-milnor} and \textsection\ref{subsubsec:invariants-disjoint-union}.

 \begin{theorem}\label{thm:analytictorsionasymptotics}
 Let $f\colon X\to \DBbb$ be a projective degeneration with isolated singularities in $X_{0}$ as above. Suppose that $X$ is endowed with a K\"ahler metric. Then
\begin{displaymath}
       (-1)^{n-p}\log \tau(X_t, \Omega^{p}_{X_{t}})=\left(\frac{\Upsilon(n+2,p)}{(n+2)!}\mu-\widetilde{s}_{n-p}-\sum_{k=0}^{p-1}s_{n-k}\right) \log|t|^2-\beta^{p,n-p} \log \log|t|^{-1} + O(1),
\end{displaymath}  
as $t\to 0$.
 \end{theorem}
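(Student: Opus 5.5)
The plan is to read off the asymptotics of the torsion from the defining relation \eqref{eq:quillenmetricdef} between the Quillen and $L^2$ metrics. Take a nowhere-vanishing local section $\sigma$ of the K\"ahler extension $\lambda(\widetilde{\Omega}^{p}_{X/\DBbb})$. Over $\DBbb^{\times}$ this relation gives
\begin{displaymath}
    \log\tau(X_t,\Omega^{p}_{X_t})=\log\|\sigma\|_{Q}^{2}-\log\|\sigma\|_{L^{2}}^{2}.
\end{displaymath}
The first term is controlled by Proposition \ref{prop:asymp-Quillen}, namely $(-1)^{n-p}\frac{\Upsilon(n+2,p)}{(n+2)!}\mu\log|t|^2$ plus a continuous term. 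The second term is controlled by Proposition \ref{prop:L2asymptotic}, which applies without the algebraicity assumption since the singularities are isolated (Lemma \ref{lemma:isolated-removal-algebraicity}). It remains to rewrite the coefficients $\alpha^{p}+(-1)^{p}\chi(F^{n-p+1}\varphi_f)$ and $\beta^{p}$ in terms of the Milnor fiber invariants of \textsection\ref{subsubsec:invariants-milnor}.

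First, for isolated singularities the vanishing cohomology $H^{j}(X_0,\varphi_f)$ is the direct sum of the reduced cohomologies of the local Milnor fibers. It is therefore concentrated in degree $j=n$, where it equals $H^{n}(M_f)$ (as a disjoint union). Hence
\begin{displaymath}
    \chi(F^{n-p+1}\varphi_f)=(-1)^{n}\dim F^{n-p+1}H^{n}(M_f)=(-1)^{n}\sum_{k=0}^{p-1}s_{n-k},
\end{displaymath}
and the correction term contributes $(-1)^{n-p}\sum_{k=0}^{p-1}s_{n-k}$. Second, by the second line of \eqref{eq:def-alphapq-lower}, each $\alpha^{p,q}$ can be computed on $\Gr_F^{p}H^{p+q}(X_0,\varphi_f)$. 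This vanishes unless $p+q=n$, so $\alpha^{p}=(-1)^{n-p}\alpha^{p,n-p}$. By \eqref{eq:sptilde-alphapq} applied with $p$ replaced by $n-p$, this equals $(-1)^{n-p}\widetilde{s}_{n-p}$.

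Third, I would treat $\beta^{p}$. For $k\neq n$ the monodromy on $H^{k}(X_\infty)$ is trivial, as recalled in \textsection\ref{subsec:finitemonodromy}. In that case $N=0$ and the limit mixed Hodge structure on $H^{k}(X_\infty)$ is pure of weight $k$, so $\Gr^{W}_{k+r}$ vanishes for $r\neq 0$. Hence $\beta^{p,q}=0$ for $p+q\neq n$, and $\beta^{p}=(-1)^{n-p}\beta^{p,n-p}$.

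Substituting everything and multiplying by $(-1)^{n-p}$ then gives
\begin{displaymath}
    (-1)^{n-p}\log\tau=\Bigl(\tfrac{\Upsilon(n+2,p)}{(n+2)!}\mu-\widetilde{s}_{n-p}-\sum_{k=0}^{p-1}s_{n-k}\Bigr)\log|t|^{2}-\beta^{p,n-p}\log\log|t|^{-1}+O(1).
\end{displaymath}
The continuous remainder of the Quillen side near $0$ is absorbed into $O(1)$. The main point requiring care is the purity and triviality of monodromy in degrees $k\neq n$. This follows from the exact sequence \eqref{eq:mixedlongexactvanishingcycles}: there $H^{k}(X_0)\to H^{k}(X_\infty)$ is an isomorphism for $k\neq n,n+1$, and for $k=n+1$ it is surjective onto a quotient with $T$ acting trivially. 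I would also double-check the sign conventions for norms on the determinant line, i.e.\ that $\|\cdot\|_Q^2=\tau\,\|\cdot\|_{L^2}^2$ and not the inverse, against \cite{cdg2}.
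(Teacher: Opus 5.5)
Your proposal is correct and follows the paper's proof, which is exactly the combination of \eqref{eq:quillenmetricdef} (with the convention $\|\cdot\|_Q^2=\tau\,\|\cdot\|_{L^2}^2$ that you used), Proposition~\ref{prop:asymp-Quillen} and Proposition~\ref{prop:L2asymptotic}. Your explicit reductions are precisely what the paper leaves implicit under ``the definition of the invariants'': concentration of $H^j(X_0,\varphi_f)$ in degree $n$ gives $\alpha^p=(-1)^{n-p}\widetilde{s}_{n-p}$ and $(-1)^p\chi(F^{n-p+1}\varphi_f)=(-1)^{n-p}\sum_{k<p}s_{n-k}$, and purity of the limit in degrees $k\neq n$ gives $\beta^p=(-1)^{n-p}\beta^{p,n-p}$.
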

\begin{proof}
This is an immediate combination of \eqref{eq:quillenmetricdef}, Proposition \ref{prop:asymp-Quillen}, Proposition \ref{prop:L2asymptotic} and the definition of the invariants in \textsection\ref{subsubsec:invariants-milnor} and \textsection\ref{subsubsec:invariants-disjoint-union}.
\end{proof}

\section{Around the higher Durfee--Saito conjectures}
In this section, we let $f: (\CBbb^{n+1}, 0) \to (\CBbb, 0)$ be the germ of a holomorphic function, defining an isolated hypersurface singularity at the origin. In the introduction, we stated the higher Durfee--Saito conjectures (Conjecture \ref{conj:higherdurfee}) and a secondary counterpart Conjecture \hyperref[conjecture:B-tilde]{$\widetilde{\mathrm{B}}$}. In this section, we discuss the structural properties of these conjectures.

\subsection{Complements on the higher Durfee--Saito conjectures}

\subsubsection{}
In Conjecture \ref{conj:higherdurfee}, the range for $p$ cannot be improved. In fact, assuming the conjecture, the sense of the inequalities changes beyond the predicted thresholds.

\begin{proposition}\label{prop:durfeesymmetryinequality}
    Suppose that the  higher Durfee--Saito conjecture holds for some $p<\floor{n/2}$, and let $q=n-p-1$. Then 
    \begin{equation}\label{eq:higher-durfee-reverse}
          \frac{\Upsilon(n+1,q)}{(n+1)!} \mu<\sum_{k=0}^{q}s_{n-k}.
    \end{equation}
\end{proposition}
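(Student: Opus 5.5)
The plan is to combine the symmetry of Eulerian numbers with Steenbrink's inequality \eqref{eq:steenbrinkineq}, which the paper sharpens in Proposition \ref{prop:steenbrinkstyleineq}. The assumed higher Durfee--Saito inequality bounds the partial sums of the $s_{n-k}$ from above. Passing to the complementary range of indices turns this into a lower bound for the complementary partial sum, after swapping $s_{n-k}$ for $s_k$.

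\emph{Step 1: rewrite the right-hand coefficient.} Apply \eqref{eq:Eulerrelation} with $n+1$ in place of $n$ (valid since $n+1\geq 1$). This gives $\Upsilon(n+1,p)+\Upsilon(n+1,n-1-p)=(n+1)!$. Since $q=n-p-1$, we get
\begin{displaymath}
\frac{\Upsilon(n+1,q)}{(n+1)!}\mu=\mu-\frac{\Upsilon(n+1,p)}{(n+1)!}\mu .
\end{displaymath}

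\emph{Step 2: rewrite the left-hand sum.} Using $\mu=\sum_{k=0}^n s_k$ and reindexing $j=n-k$, we have
\begin{displaymath}
\mu-\sum_{k=0}^{q}s_{n-k}=\sum_{k=n-p}^{n}s_{n-k}=\sum_{j=0}^{p}s_j .
\end{displaymath}
So the desired inequality \eqref{eq:higher-durfee-reverse} is equivalent to
\begin{displaymath}
\sum_{j=0}^{p}s_j<\frac{\Upsilon(n+1,p)}{(n+1)!}\mu .
\end{displaymath}

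\emph{Step 3: conclude.} By Steenbrink's inequality \eqref{eq:steenbrinkineq}, or the exact form in Proposition \ref{prop:steenbrinkstyleineq} (the difference is $\dim \Gr_F^{p+1}H^n(M_f)_{=1}\geq 0$), we have $\sum_{j=0}^p s_j\leq\sum_{k=0}^p s_{n-k}$. By the assumed higher Durfee--Saito conjecture for $p$, the latter is strictly less than $\Upsilon(n+1,p)\mu/(n+1)!$, which gives the claim.

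No step is a genuine obstacle; the only care needed is the bookkeeping of indices in Steps 1 and 2. One should check that $q=n-p-1$ matches the index $n-p-2$ in \eqref{eq:Eulerrelation} once $n$ is shifted to $n+1$. One should also check that the complement of $\{0,\dots,q\}$ in $\{0,\dots,n\}$ is exactly $\{n-p,\dots,n\}$.
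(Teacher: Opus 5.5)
Your proof is correct and uses the same two ingredients as the paper: the Eulerian relation \eqref{eq:Eulerrelation} and Steenbrink's inequality \eqref{eq:steenbrinkineq}. The only difference is cosmetic. The paper complements the assumed inequality and applies Steenbrink's inequality at index $q$, whereas you complement the target inequality and apply it at index $p$; these two instances of Steenbrink's inequality are equivalent under passing to complementary index ranges.
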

\begin{proof}
    On the one hand, since $\mu = \sum_k s_k$, we have by \eqref{eq:Eulerrelation},
    \begin{displaymath}
        \frac{\Upsilon(n+1,q)}{(n+1)!} \mu = \mu-\frac{\Upsilon(n+1,p)}{(n+1)!} \mu < \sum_{k=0}^n s_k - \sum_{k=0}^{p} s_{n-k}=\sum_{k=0}^{q} s_k.
    \end{displaymath}
On the other hand, by \eqref{eq:steenbrinkineq} we have 
    \begin{equation}\label{eq:steenbrink-inequality}
        \sum_{k=0}^q s_{k}\leq \sum_{k=0}^{q} s_{n-k}.
    \end{equation}
We conclude by concatenating the inequalities.
\end{proof}

The inequalities \eqref{eq:higherDurfeeSaitoconjecture} and \eqref{eq:higher-durfee-reverse} are equivalent in the case that \eqref{eq:steenbrink-inequality} is an equality. This happens exactly whenever $\Gr^{q+1}_{F}H^{n}(M_{f})_{=1}\simeq \Gr^{n-q}_{F}H^{n}(M_{f})_{=1}$ is trivial by Proposition \ref{prop:steenbrinkstyleineq}.\subsubsection{} Conjecture \ref{conj:higherdurfee} and Proposition \ref{prop:durfeesymmetryinequality} actually give inequalities for all $p$, except possibly when $n$  is odd and $p=(n-1)/2.$ In this case, an elementarily argument gives the following unconditional result, whose proof is analogous to that of Proposition \ref{prop:durfeesymmetryinequality} and is omitted:
\begin{proposition}\label{prop:durfeesaitolimitcase}
    Suppose $n$ is odd, and $p=(n-1)/2$. Then 
    \begin{displaymath} \frac{\Upsilon(n+1,(n-1)/2)}{(n+1)!}=\frac{1}{2}
    \end{displaymath} and there is an inequality
    \begin{displaymath}
        \frac{\mu}{2} \leq \sum_{k=0}^{(n-1)/2} s_{n-k},
    \end{displaymath}
    with equality exactly when $\Gr^{(n+1)/2}_F H^n(M)_{=1}$ is trivial.
\end{proposition}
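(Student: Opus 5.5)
The proof splits into two independent steps: the value of the Eulerian sum, and the inequality together with its equality case. For $n$ odd and $p=(n-1)/2$ we have $n+1$ even and $q:=n-p-1=(n-1)/2=p$. The symmetry relation \eqref{eq:Eulerrelation}, applied with $n+1$ in place of $n$, reads $\Upsilon(n+1,p)+\Upsilon(n+1,(n+1)-p-2)=(n+1)!$. Since $(n+1)-p-2=(n-1)/2=p$, this becomes $2\Upsilon(n+1,p)=(n+1)!$, which gives $\Upsilon(n+1,(n-1)/2)/(n+1)!=1/2$. No Hodge theory is needed for this part.

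For the inequality, write $m=(n-1)/2$. Split $\mu=\sum_{k=0}^n s_k$ into the two halves $\sum_{k=0}^{m}s_k$ and $\sum_{k=m+1}^{n}s_k$. Reindexing the second half by $k\mapsto n-k$ gives $\sum_{k=0}^{m}s_{n-k}$, because $n-(m+1)=m$. Hence
\begin{displaymath}
\mu=\sum_{k=0}^{m}s_k+\sum_{k=0}^{m}s_{n-k}.
\end{displaymath}
By Steenbrink's inequality \eqref{eq:steenbrinkineq} with $p=m$, the first sum is at most the second. Therefore $\mu\le 2\sum_{k=0}^{m}s_{n-k}$, which is the claimed inequality.

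For the equality case, Proposition \ref{prop:steenbrinkstyleineq} gives the exact defect:
\begin{displaymath}
\sum_{k=0}^{m}s_{n-k}-\sum_{k=0}^{m}s_k=\dim\Gr_F^{m+1}H^n(M_f)_{=1},
\end{displaymath}
and $m+1=(n+1)/2$. Combining this with the decomposition of $\mu$ above yields the identity
\begin{displaymath}
\sum_{k=0}^{m}s_{n-k}-\frac{\mu}{2}=\frac12\dim\Gr_F^{(n+1)/2}H^n(M_f)_{=1}.
\end{displaymath}
So equality holds exactly when this graded piece vanishes.

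The only point that needs care is the index bookkeeping: checking that the two halves of $\mu$ do not overlap and together cover $0,\dots,n$ for odd $n$, and that the index $p+1$ in Proposition \ref{prop:steenbrinkstyleineq} matches $(n+1)/2$. The rest is a direct specialization of results already stated.
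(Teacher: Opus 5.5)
Your proof is correct. It is essentially the argument the paper omits as ``analogous to Proposition \ref{prop:durfeesymmetryinequality}'': the symmetry \eqref{eq:Eulerrelation} with $q=p$, the complement identity $\mu-\sum_{k=0}^{p}s_{n-k}=\sum_{k=0}^{p}s_k$ combined with Steenbrink's inequality \eqref{eq:steenbrinkineq}, and Proposition \ref{prop:steenbrinkstyleineq} for the equality case.
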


\subsubsection{}\label{subsub:secondaryhigherDurfSaito} We also include the following lemma which states that the secondary higher Durfee--Saito conjecture is in fact a consequence of the Durfee--Saito conjecture. 

\begin{lemma}\label{lemma:BimpliesB-tilde}
If Conjecture \ref{conj:higherdurfee} holds for isolated hypersurface singularities in dimension $n+1$, then Conjecture \hyperref[conjecture:B-tilde]{$\widetilde{\mathrm{B}}$} holds in dimension $n$.
\end{lemma}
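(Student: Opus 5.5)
The plan is to deduce the secondary inequality for $f$ in dimension $n$ from the non-secondary inequality applied to a suspension of $f$ in dimension $n+1$. The bridge is Lemma \ref{lemma:ThomSebastianisusp}. Concretely, fix $f\colon(\CBbb^{n+1},0)\to(\CBbb,0)$ and $0\leq p<n/2$. Choose $N\geq 1$ such that $T_s^N$ acts trivially on $H^n(M_f)$, and consider the suspension $g=f+u^{N+1}$. This germ on $(\CBbb^{n+2},0)$ defines an isolated hypersurface singularity of dimension $n+1$.

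\textbf{Step 1 (the left-hand side).} By Lemma \ref{lemma:ThomSebastianisusp},
\begin{displaymath}
N\left(\widetilde{s}_{n-p}(f)+\sum_{k=0}^{p-1}s_{n-k}(f)\right)=\dim F^{n-p+1}H^{n+1}(M_{g}).
\end{displaymath}
Since $H^{n+1}(M_g)$ is the middle cohomology of the Milnor fiber of $g$, the definition \eqref{eq:def-sk} applied to $g$ gives
\begin{displaymath}
\dim F^{n-p+1}H^{n+1}(M_g)=\sum_{j=n-p+1}^{n+1}s_j(g)=\sum_{k=0}^{p}s_{(n+1)-k}(g).
\end{displaymath}
This is exactly the left-hand side of \eqref{eq:higherDurfeeSaitoconjecture} for $g$, in dimension $n+1$ and with the same $p$.

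\textbf{Step 2 (the Milnor number).} By the Thom--Sebastiani property \eqref{eq:TS}, the Milnor number is multiplicative. The Milnor number of $u^{N+1}$ is $N$, so $\mu(g)=N\mu(f)$. The right-hand side of \eqref{eq:higherDurfeeSaitoconjecture} for $g$ is therefore
\begin{displaymath}
\frac{\Upsilon(n+2,p)}{(n+2)!}\,N\mu(f).
\end{displaymath}

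\textbf{Step 3 (the range of $p$).} To apply Conjecture \ref{conj:higherdurfee} to $g$, I need $n+1\geq 2$ and $0\leq p<\lfloor (n+1)/2\rfloor$. The first holds since $n\geq 1$. For the second, split by parity:
\begin{itemize}
\item if $n$ is even, then $p<n/2$ means $p\leq n/2-1$, and $\lfloor (n+1)/2\rfloor=n/2$;
\item if $n$ is odd, then $p<n/2$ means $p\leq (n-1)/2$, which is strictly less than $\lfloor (n+1)/2\rfloor=(n+1)/2$.
\end{itemize}
In both cases the condition holds. The hypothesis $p\leq n$ of Lemma \ref{lemma:ThomSebastianisusp} is also automatic.

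\textbf{Step 4 (conclusion).} Conjecture \ref{conj:higherdurfee} for $g$ gives
\begin{displaymath}
N\left(\widetilde{s}_{n-p}(f)+\sum_{k=0}^{p-1}s_{n-k}(f)\right)<\frac{\Upsilon(n+2,p)}{(n+2)!}\,N\mu(f).
\end{displaymath}
Dividing by $N>0$ gives Conjecture \hyperref[conjecture:B-tilde]{$\widetilde{\mathrm{B}}$} for $f$.

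The only point needing care is the index bookkeeping. One must check that $F^{n-p+1}$ on the $(n+1)$-dimensional Milnor cohomology of $g$ corresponds to the sum $\sum_{k=0}^{p}s_{n+1-k}(g)$, which matches the range $k\leq p$ in Conjecture \ref{conj:higherdurfee}. One must also check that the strict bound $p<n/2$ lands inside the admissible range $p<\lfloor (n+1)/2\rfloor$ in dimension $n+1$, as verified in Step 3. Everything else is a direct substitution.
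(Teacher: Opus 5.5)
Your proposal is correct and follows essentially the same route as the paper: suspend by $u^{N+1}$ with $T_s^N=\mathrm{id}$, identify $\sum_{k=0}^{p}s_{n+1-k}(g)$ with $N\bigl(\widetilde{s}_{n-p}(f)+\sum_{k=0}^{p-1}s_{n-k}(f)\bigr)$ via Lemma \ref{lemma:ThomSebastianisusp}, use $\mu(g)=N\mu(f)$, and divide by $N$. Your explicit check that $p<n/2$ is exactly the admissible range $p<\lfloor (n+1)/2\rfloor$ in dimension $n+1$ is a detail the paper leaves implicit.
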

\begin{proof}
Choose $N\geq 1$ with $T_s^N=\id$ and consider $g=f+u^{N+1}$. Then $\mu_g=N\mu_f$. For every integer $0\leq p<n/2$, Conjecture \ref{conj:higherdurfee} for $g$, of dimension $n+1$, gives
\[
\sum_{k=0}^{p}s_{n+1-k}(g)<\frac{\Upsilon(n+2,p)}{(n+2)!}N\mu_f.
\]
By Lemma \ref{lemma:ThomSebastianisusp}, the left side is
$N\bigl(\widetilde{s}_{n-p}(f)+\sum_{k=0}^{p-1}s_{n-k}(f)\bigr)$.
Dividing by $N$ proves the required secondary inequality.
\end{proof}

\subsection{Higher Durfee--Saito conjectures and suspensions}\label{subsec:higher-DS-conj-susp}
We next consider the behavior of the higher Durfee--Saito conjectures under double suspensions by squares.

\subsubsection{}  We begin with a combinatorial lemma on the monotonicity of the cumulative Eulerian numbers. 

\begin{lemma}\label{lemma:yoshikawa-eulerian-suspension}
Let $n\geq 3$. Then, the inequality
\begin{displaymath}
    \frac{\Upsilon(n+2,p+1)}{(n+2)!}
    >
    \frac{\Upsilon(n,p)}{n!}
\end{displaymath}
holds for $0\leq p< n/2-1$.
\end{lemma}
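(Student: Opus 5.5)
The plan is to turn the inequality into a statement about Irwin--Hall distributions. As recalled in \textsection\ref{subsec:spectral-measures-intro}, for $m\geq 1$ and $p\geq 0$ one has
\begin{displaymath}
\frac{\Upsilon(m,p)}{m!}=\PBbb(S_m\leq p+1),
\end{displaymath}
where $S_m=U_1+\cdots+U_m$ is a sum of $m$ independent uniform random variables on $[0,1]$. I would take this identity as known; it is the classical link between Eulerian numbers and the Irwin--Hall distribution, and it can also be read off from \eqref{eq:expansion-Euler-numer}. Write $F_n$ for the cumulative distribution function of $S_n$ and $N_n$ for its density. The claim then becomes
\begin{displaymath}
\PBbb(S_n+U+V\leq p+2)>F_n(p+1),
\end{displaymath}
where $U,V$ are uniform on $[0,1]$ and independent of $S_n$.

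Set $a=p+1$ and $W=U+V-1$. Then $W$ is symmetric on $[-1,1]$ with the triangular density, and conditioning on $W$ gives
\begin{displaymath}
\PBbb(S_n+U+V\leq p+2)=\EBbb\,[F_n(a-W)]=\EBbb\left[\tfrac12\bigl(F_n(a+W)+F_n(a-W)\bigr)\right],
\end{displaymath}
where the last step uses the symmetry of $W$. It therefore suffices to show that for every $0<w\leq 1$,
\begin{displaymath}
F_n(a+w)+F_n(a-w)-2F_n(a)=\int_0^w\bigl(N_n(a+s)-N_n(a-s)\bigr)\,ds>0.
\end{displaymath}
Since $W\neq 0$ almost surely, this gives the strict inequality after taking expectations.

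The key input is the shape of $N_n$. It is symmetric about $n/2$, and it is strictly increasing on $[0,n/2]$ and strictly decreasing on $[n/2,n]$. Strict unimodality follows from log-concavity of a convolution of uniform densities, or by induction, since $N_{m+1}(x)=F_m(x)-F_m(x-1)$. The hypothesis $p<n/2-1$ gives $a<n/2$. Hence for $0<s<1$ the point $a+s$ is strictly closer to the center $n/2$ than $a-s$, because $|a-s-n/2|=n/2-a+s>|a+s-n/2|$. Strict unimodality together with symmetry then gives $N_n(a+s)>N_n(a-s)$. The hypothesis $n\geq 3$ ensures that $N_n$ is continuous and genuinely strictly unimodal. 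The case $a-s<0$, where $N_n(a-s)=0<N_n(a+s)$, is harmless.

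I expect the main point to be clean is the strict unimodality and symmetry of the Irwin--Hall density, together with the identification of $\Upsilon/m!$ with the distribution function. Everything else is a two-line symmetrization argument.
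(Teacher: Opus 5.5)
Your argument is correct, and it takes a genuinely different route from the paper's proof.

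The paper works entirely within the combinatorics of Eulerian numbers. It sums the Eulerian recurrence twice to obtain the exact identity $\Upsilon(n+2,r)=(n+2)(n+1)\Upsilon(n,r-1)+\Delta(n,r)$, where the defect $\Delta(n,r)$ is an explicit combination of the Eulerian numbers of $n$ with $r$ and $r-1$ descents. It then proves $\Delta(n,r)\geq 0$ for $2r\leq n$, with equality only when $n=2r$, by induction on $n$ using a recurrence for $\Delta$ itself. The lemma is the case $r=p+1$.

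You instead identify $\Upsilon(m,p)/m!$ with the Irwin--Hall distribution function at $p+1$. This does follow from \eqref{eq:expansion-Euler-numer}, by the same Pascal-rule telescoping the paper uses to show $A(n+2,p)=\Upsilon(n+2,p)$ in Proposition \ref{prop:asymp-Quillen}. You then reduce everything to the symmetry and strict unimodality of the Irwin--Hall density.

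What each approach gives:
\begin{itemize}
\item The paper's proof is self-contained and purely combinatorial, and it yields an exact formula for the gap.
\item Yours is shorter and more conceptual. It explains the threshold $p<n/2-1$ as the condition that the cut-off $p+1$ lies strictly to the left of the center $n/2$. It explains the equality case $n=2(p+1)$ by symmetry. It fits the measure-theoretic viewpoint of \S\ref{subsec:spectral-measures-intro}, and it works unchanged for non-integral cut-offs $0<a<n/2$.
\end{itemize}

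Three small remarks. Strictness of the unimodality should be justified by your induction $N_{m+1}'(x)=N_m(x)-N_m(x-1)$, since log-concavity alone only gives weak unimodality. That induction already gives strictness for $n\geq 2$, so $n\geq 3$ only serves to make the range of $p$ non-empty. Finally, since $a=p+1\geq 1$ and $s<1$, the case $a-s<0$ never actually occurs.
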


\begin{proof}
For integers $0\leq r\leq m$, recall the definition of $\Upsilon(m,r)$ in terms of Eulerian numbers, cf. \eqref{eq:Yoshikawa-coefficient}. We use the standard recurrence for Eulerian numbers \cite[Theorem 1.3]{Eulerian}
\begin{equation}\label{eq:eulerian-recurrence}
\left\langle {m+1\atop k}\right\rangle
=
(m+1-k)\left\langle {m\atop k-1}\right\rangle
+
(k+1)\left\langle {m\atop k}\right\rangle ,
\end{equation}
with the convention that \(\left\langle {m\atop k}\right\rangle=0\) if \(k<0\) or \(k\geq m\).
Summing this identity over \(0\leq k\leq r\), we find
\[
\Upsilon(m+1,r)
=
(m+1)\Upsilon(m,r-1)
+
(r+1)\left\langle {m\atop r}\right\rangle .
\]
We now compare \(\Upsilon(n+2,r)\) with \(\Upsilon(n,r-1)\). Applying the preceding identity once gives
\[
\Upsilon(n+2,r)
=
(n+2)\Upsilon(n+1,r-1)
+
(r+1)\left\langle {n+1\atop r}\right\rangle .
\]
Applying it again to \(\Upsilon(n+1,r-1)\), and using the recurrence \eqref{eq:eulerian-recurrence} for
\(\left\langle {n+1\atop r}\right\rangle\), we obtain

\begin{equation}\label{eq:double-recurrence-Upsilon}
\begin{split}
    \Upsilon(n+2,r)
    =&
    (n+2)(n+1)\Upsilon(n,r-2) + r(n+2)\left\langle {n\atop r-1}\right\rangle  \\
    &+(r+1)\left((n+1-r)\left\langle {n\atop r-1}\right\rangle+(r+1)\left\langle {n\atop r}\right\rangle\right).
\end{split}
\end{equation}
Since by definition of $\Upsilon$ we have
\[
\Upsilon(n,r-1)
=
\Upsilon(n,r-2)
+
\left\langle {n\atop r-1}\right\rangle ,
\]
we can rewrite \eqref{eq:double-recurrence-Upsilon} as
\[
\Upsilon(n+2,r)
=
(n+2)(n+1)\Upsilon(n,r-1)
+
\Delta(n,r),
\]
where
\[
\Delta(n,r)
=
(r+1)^2\left\langle {n\atop r}\right\rangle
-
(n-r+1)^2\left\langle {n\atop r-1}\right\rangle .
\]

We next prove by induction that
\[
    \Delta(n,r)\geq 0
    \quad\text{for}\quad 2r\leq n,
\]
with equality only in the boundary case \(n=2r\).

For \(r=0\), one has
\[
\Delta(n,0)=\left\langle {n\atop 0}\right\rangle=1,
\]
so the assertion is clear. Thus, from now on we may assume $r\geq 1$. For the limit case \(n=2r\), equality follows from the symmetry \eqref{eq:eulernumberssymmetry}, 
since then $r-1=2r-r-1$ so that
\[
\left\langle {n\atop r}\right\rangle
=
\left\langle {n\atop r-1}\right\rangle.
\]

For the induction step, a tedious but straightforward computation from the recurrence \eqref{eq:eulerian-recurrence} gives
\begin{displaymath}
\begin{split}
\Delta(n+1,r)=&(r+1)\Delta(n,r)+(n-r+2)\Delta(n,r-1)  \\
&+(n+2)(n-2r+1)\left\langle {n\atop r-1}\right\rangle.
\end{split}
\end{displaymath}
When \(2r\leq n\), the first two terms are non-negative by induction, while the last term is
positive since we are assuming $r\geq 1$. Hence
\[
\Delta(n+1,r)>0
\]
whenever \(2r<n+1\). This proves the claim.

We finally specialize to \(r=p+1\). The assumption \(p<n/2-1\) is equivalent to
\[
2(p+1)<n,
\]
and therefore \(\Delta(n,p+1)>0\). The identity above gives
\[
\Upsilon(n+2,p+1)
>
(n+2)(n+1)\Upsilon(n,p).
\]
Dividing by \((n+2)!\), we obtain
\[
\frac{\Upsilon(n+2,p+1)}{(n+2)!}
>
\frac{\Upsilon(n,p)}{n!},
\]
as required.

\end{proof}

\begin{corollary}\label{cor:sec-DS-simple-cases}
Let $f\colon (\CBbb^{n+1},0)\to (\CBbb,0)$ be the germ of a holomorphic function, defining an isolated hypersurface singularity at the origin. Define $g=f+u^2+v^2$, where $u,v$ are auxiliary variables. If the higher (resp. secondary higher) Durfee--Saito conjecture holds for $f$ and some $p<\floor{n/2}$ (resp. $p<n/2$), then the higher (resp. secondary higher) Durfee--Saito conjecture holds for $g$ and $p+1$.  
\end{corollary}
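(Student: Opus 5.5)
Since $u^2+v^2$ has the single spectral number $1$ (two copies of $\{1/2\}$), the Thom--Sebastiani property of \textsection\ref{subsec:TS-properties} makes the spectrum of $g$ the spectrum of $f$ shifted by $1$. I therefore expect the proof to reduce to index shifting combined with Lemma~\ref{lemma:yoshikawa-eulerian-suspension}. Write $n'=n+2$ for the dimension of $g$. Then $\mu(g)=\mu(f)$, and by \eqref{eq:def-sk-bis} a spectral number $\lambda+1$ of $g$ lies in $(n'-p',n'-p'+1]$ exactly when $\lambda\in(n-(p'-2)\,,\,n-(p'-2)+1]$. Hence $s_{p'}(g)=s_{p'-2}(f)$, with the conventions $s_{-1}(f)=s_{-2}(f)=0$. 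The same shift applied to \eqref{eq:sptilde-alphapq} gives $\widetilde{s}_{p'}(g)=\widetilde{s}_{p'-2}(f)$: the quantity $\lambda-p$ for $p<\lambda<p+1$ is invariant under the simultaneous shift of $\lambda$ and $p$ by one. In particular
\[
\sum_{k=0}^{p+1}s_{n'-k}(g)=\sum_{k=0}^{p+1}s_{n-k}(f),\qquad \widetilde{s}_{n'-(p+1)}(g)+\sum_{k=0}^{p}s_{n'-k}(g)=\widetilde{s}_{n-(p+1)}(f)+\sum_{k=0}^{p}s_{n-k}(f).
\]

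At this point there is an indexing subtlety. Taken literally, the conclusion ``for $g$ and $p+1$'' would compare $\sum_{k\le p+1}s_{n'-k}(g)=\sum_{k\le p+1}s_{n-k}(f)$ with $\Upsilon(n'+1,p+1)/(n'+1)!$. That expression involves $p+1$ Hodge indices of $f$, whereas the hypothesis on $f$ only controls $\sum_{k\le p}s_{n-k}(f)$. I therefore read the shift the other way: the hypothesis on $f$ at $p$ should yield the conclusion for $g$ at the index whose partial sum is exactly $\sum_{k\le p}s_{n-k}(f)$. Under $\lambda\mapsto\lambda+1$, the measure-theoretic form \eqref{eq:conj-B-measure} gives $\nu_g([0,p+2])=\nu_f([0,p+1])$, so the relevant threshold for $g$ is $p+2$, i.e.\ index $p+1$ in the convention of \eqref{eq:conj-B-measure}. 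This matches the statement. Concretely, $\sum_{k=0}^{p+1}s_{n'-k}$ in the formula of Conjecture~\ref{conj:higherdurfee} should be compared with $\sum_{k=0}^{p}s_{n-k}(f)$, because the top two Hodge pieces $s_{n'}(g),s_{n'-1}(g)$ vanish only in the sense that they are the shifts of $s_n(f), s_{n-1}(f)$. My first task will be to recheck this bookkeeping against \eqref{eq:def-sk-bis} and \eqref{eq:conj-B-measure}, and to settle which partial sum of $g$ corresponds to $\sum_{k\le p}s_{n-k}(f)$.

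Once the bookkeeping is fixed, the proof becomes a chain of inequalities. For the non-secondary conjecture, apply the hypothesis on $f$ in dimension $n$ and then Lemma~\ref{lemma:yoshikawa-eulerian-suspension} with $n+1$ in place of $n$:
\[
\sum_{k=0}^{p}s_{n-k}(f)<\frac{\Upsilon(n+1,p)}{(n+1)!}\mu<\frac{\Upsilon(n+3,p+1)}{(n+3)!}\mu,
\]
which is valid since $p<\lfloor n/2\rfloor$ gives $p<(n+1)/2-1$. For the secondary conjecture, the same argument uses the lemma in dimension $n$, requiring $p<n/2-1$, and compares $\Upsilon(n+2,p)/(n+2)!$ with $\Upsilon(n+4,p+1)/(n+4)!$. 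The range conditions also need checking: $p+1<\lfloor (n+2)/2\rfloor$ holds, and $p+1<(n+2)/2$ holds.

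The main obstacle I anticipate is the boundary case. In the secondary version with $n/2-1\le p<n/2$, and in small dimensions $n<3$ where the lemma is not available, the strict Eulerian inequality can fail. There I would treat the cases separately. When $n$ is even and $p=n/2-1$, the proof of Lemma~\ref{lemma:yoshikawa-eulerian-suspension} gives $\Delta=0$, i.e.\ the two Eulerian ratios are equal, so the strict inequality for $f$ still transfers to $g$. For the remaining boundary cases I would try to use \eqref{eq:Eulerrelation} together with Proposition~\ref{prop:durfeesaitolimitcase}.
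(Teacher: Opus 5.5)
Your strategy is the paper's: the spectrum of $g$ is that of $f$ shifted by $1$, and then Lemma~\ref{lemma:yoshikawa-eulerian-suspension} is applied. The execution, however, has two concrete errors, and the second leaves part of the secondary case unproved as written.

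\textbf{The shift is off by one.} By \eqref{eq:def-sk-bis} with $n'=n+2$, one has $\lambda+1\in(n'-p',n'-p'+1]$ if and only if $\lambda\in(n-(p'-1),\,n-(p'-1)+1]$. Hence $s_{k+1}(g)=s_k(f)$, not $s_{k+2}(g)=s_k(f)$. Likewise $\widetilde s_{k+1}(g)=\widetilde s_k(f)$, which is what your own justification (shifting $\lambda$ and $p$ by one) actually gives. Together with $s_{n+1}(f)=0$ this yields directly
\[
\sum_{k=0}^{p+1}s_{n+2-k}(g)=\sum_{k=0}^{p}s_{n-k}(f),\qquad \widetilde s_{n+1-p}(g)+\sum_{k=0}^{p}s_{n+2-k}(g)=\widetilde s_{n-p}(f)+\sum_{k=0}^{p-1}s_{n-k}(f).
\]
So both of your displayed identities are false, and the ``indexing subtlety'' is an artifact of the miscount. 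Only one top Hodge piece of $g$ vanishes, namely $s_{n+2}(g)=s_{n+1}(f)=0$. Your measure check $\nu_g([0,p+2])=\nu_f([0,p+1])$ was the correct one. For the secondary version the analogous identity is $g_{p+1}(t+1)=g_p(t)$.

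\textbf{The lemma is misapplied in the secondary case.} There you need $\Upsilon(n+2,p)/(n+2)!<\Upsilon(n+4,p+1)/(n+4)!$. This is Lemma~\ref{lemma:yoshikawa-eulerian-suspension} with $n+2$ in place of $n$, not ``in dimension $n$''. Its hypotheses are then $n+2\geq 3$ and $p<(n+2)/2-1=n/2$, which is exactly the range of the secondary conjecture, stated for $n\geq 1$. Equivalently, $\Delta(n+2,p+1)>0$ because $2(p+1)<n+2$. So there is no boundary case and no small-dimension exception; in the non-secondary case the requirement $n+1\geq 3$ is just the standing assumption $n\geq 2$.

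As written, however, your argument only covers $p<n/2-1$. For example, $n=3$, $p=1$ is left open. The patches you propose for $n/2-1\leq p<n/2$ do not work:
\begin{itemize}
\item The equality $\Delta(n,n/2)=0$ compares $\Upsilon(n+2,n/2)/(n+2)!$ with $\Upsilon(n,n/2-1)/n!$, which is not the pair of ratios entering the secondary conjecture.
\item The odd-$n$ case is only ``try to use'' \eqref{eq:Eulerrelation} and Proposition~\ref{prop:durfeesaitolimitcase}, with no actual argument.
\end{itemize}
With the correct shift and the lemma applied at $n+1$ (non-secondary) and $n+2$ (secondary), the proof is the two-line argument of the paper.
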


\begin{proof}
The statement follows immediately from the lemma and the Thom--Sebastiani properties recalled in \textsection\ref{subsec:TS-properties}. Precisely, $\mu(g)=\mu(f)$, $s_{k+1}(g)=s_{k}(f)$, $\widetilde{s}_{k+1}(g)=\widetilde{s}_{k}(f)$. 
\end{proof}

\subsubsection{} For later use, we record the following direct consequences of Lemma \ref{lemma:yoshikawa-eulerian-suspension}, the original Durfee--Saito conjecture for quasi-homogeneous singularities \cite{Yau-Zhang}, and the secondary counterpart for the spectral genus addressed in \cite[Theorem B]{DURF1}.

\begin{corollary}\label{cor:cases-higher-durfee}
\begin{enumerate} 
    \item Conjecture \ref{conj:higherdurfee} holds for suspensions of the following types of singularities, by an even number of squares:
    \begin{enumerate}
     \item Isolated plane curve singularities.
     \item Isolated quasi-homogeneous singularities of dimension 2 or 3.
    \end{enumerate}
    \item Conjecture \hyperref[conjecture:B-tilde]{$\widetilde{\mathrm{B}}$} holds for suspensions of the following types of singularities, by an even number of squares:
    \begin{enumerate}
    \item Isolated plane curve singularities.
    \item Isolated quasi-homogeneous surface singularities.
    \end{enumerate}
    In particular, the secondary higher Durfee--Saito conjectures hold for $ADE$ singularities.
\end{enumerate}
\end{corollary}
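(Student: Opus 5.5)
The plan is to reduce everything to base cases in low dimension and then propagate them with Corollary \ref{cor:sec-DS-simple-cases}. Each suspension by two squares raises the dimension by $2$ and the index $p$ by $1$. So a singularity $h=f+u_1^2+\cdots+u_{2m}^2$, with $f$ of dimension $n_0$ and $h$ of dimension $n=n_0+2m$, is obtained from $f$ by $m$ double suspensions. Let $p<\lfloor n/2\rfloor$ (resp. $p<n/2$) be the index we must handle for $h$, and write $p=p_0+m'$ with $m'\le m$.

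We peel off double suspensions until either $m'=m$ or $p_0=0$. There are two cases.

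\emph{Case $p\geq m$.} Take $p_0=p-m$. The constraint $p<\lfloor n/2\rfloor$ gives $p_0<\lfloor n_0/2\rfloor$, and the secondary constraint $p<n/2$ gives $p_0<n_0/2$. So it suffices to know the conjecture for $f$ at $p_0$. In the relevant ranges $p_0$ is forced to be $0$. For curves ($n_0=1$) only the secondary version is nonvacuous, and there only at $p_0=0$. For surfaces and threefolds, $\lfloor n_0/2\rfloor=1$, so again only $p_0=0$ occurs. For secondary surfaces, $p_0<1$ gives $p_0=0$.

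\emph{Case $p<m$.} Take $p_0=0$ and regard $h$ as a suspension of $f'=f+u_1^2+\cdots+u_{2(m-p)}^2$. By Thom--Sebastiani, $f'$ is again of the same type up to square suspensions. So everything reduces to the case $p=0$ for even square-suspensions of the listed singularities.

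The case $p=0$ is the classical or secondary Durfee--Saito inequality. For Conjecture \ref{conj:higherdurfee} this reads $s_n<\mu/(n+1)$. For quasi-homogeneous singularities, and even suspensions of them (which remain quasi-homogeneous), this is the Yau--Zhang theorem \cite{Yau-Zhang}. For plane curves, even suspensions give $s_n(h)=s_{n-m}(f)$ with $n-m\ge 1$, and I need the inequality $s_{n}(h)=s_{1-?}$. I would check the curve case directly. For a curve $f$, the suspension $h$ of dimension $1+2m$ satisfies $s_{1+2m-k}(h)=s_{1-(k-m)}(f)$. This vanishes for $k<m$, which makes the sum for $p<m$ trivially zero. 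For $p\ge m$ it reduces to Durfee's classical inequality for curves combined with Lemma \ref{lemma:yoshikawa-eulerian-suspension}.

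For the secondary version at $p=0$, the inequality $\widetilde p_g<\mu/(n+2)!$ is \cite[Theorem B]{DURF1}, which covers curves and quasi-homogeneous surfaces. Lemma \ref{lemma:yoshikawa-eulerian-suspension}, through Corollary \ref{cor:sec-DS-simple-cases}, then propagates it. The ADE statement follows because ADE singularities are even square-suspensions of $A_k,D_k,E_k$ curve or surface germs, which are quasi-homogeneous.

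The main obstacle is the bookkeeping for the low-index cases $p<m$, where the sums involve Hodge pieces that vanish for suspensions. There I would verify directly that the left-hand side is $0$ and that the right-hand side $\Upsilon\cdot\mu>0$ is positive, so the inequality holds trivially. A second point to check is that the range conditions in Lemma \ref{lemma:yoshikawa-eulerian-suspension} ($n\ge3$ and $p<n/2-1$) are met at each step.
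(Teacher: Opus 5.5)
Your proposal is correct and follows the paper's route. The indices $p<m$ are trivial because the left-hand side vanishes for a nontrivial double suspension, since $s_{k+m}(h)=s_k(f)$ and $\widetilde{s}_{k+m}(h)=\widetilde{s}_k(f)$. The only remaining index $p=m$ reduces, via Corollary~\ref{cor:sec-DS-simple-cases} and Lemma~\ref{lemma:yoshikawa-eulerian-suspension}, to the case $p=0$ for $f$ itself, which is supplied by \cite{Yau-Zhang} and \cite[Theorem B]{DURF1}.

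Some small corrections to the write-up:
\begin{enumerate}
\item The $p=0$ inequality reads $s_n<\mu/(n+1)!$, not $s_n<\mu/(n+1)$.
\item For $p<m$, rely on the vanishing alone rather than on \cite{Yau-Zhang} or \cite{DURF1}. In particular, \cite{DURF1} does not literally apply to even suspensions of curves.
\item For curves in Conjecture~\ref{conj:higherdurfee}, the case $p\geq m$ never occurs, so your appeal there to a classical Durfee inequality for curves is vacuous.
\end{enumerate}
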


\subsection{Spectral measures and asymptotic sharpness}\label{label:asymptotic-sharpness}
For a germ $f$ as above, recall that we introduced the spectral measure $\nu_f$, as well as the Irwin--Hall measure $\nu_{\IH_n}$, in the introduction \textsection\ref{subsec:spectral-measures-intro}. We also recall that we expressed Conjecture \ref{conj:higherdurfee} and Conjecture \hyperref[conjecture:B-tilde]{$\widetilde{\mathrm{B}}$} in a measure-theoretic language, see \eqref{eq:conj-B-measure}--\eqref{eq:conj-B-tilde-measure}. In this subsection we rely on this interpretation to prove that our conjectures are sharp. 

\subsubsection{} In \cite{KSaito:distribution} K. Saito considers families of functions $\lbrace f_{m}\rbrace_{m}$ for which the spectrum equidistributes with respect to the Irwin--Hall probability measure. Precisely, the convergence is understood as the locally uniform convergence of the Fourier transforms $\widehat{\nu}_{f_{m}}$ to $\widehat{\nu}_{\IH_n}$. This equidistribution property is established for Brieskorn--Pham singularities with exponents tending to $+\infty$, and more generally for quasi-homogeneous singularities with weights tending to $0$, cf. \cite[(2.4) \& (3.7)]{KSaito:distribution}. For other instances of equidistribution, see \cite{Almiron-Schulze}. 

%\subsubsection{}  In \cite[(2.5)--(2.8)]{KSaito:distribution}, Saito also asks to what extent one can bound $\nu_{f}$ by $\nu_{\IH_{n}}$. Our conjectures are related to this question as follows. 
%
%The  Durfee--Saito conjecture for $f$ can be recast as the inequality
%\begin{displaymath}
%    \int_{\RBbb}\mathbf{1}_{[0,p+1]}d\nu_{\IH_{n}}>\int_{\RBbb}\mathbf{1}_{[0,p+1]}d\nu_{f},\quad p<\floor{n/2}.
%\end{displaymath}
%The left hand side indeed equals $\Upsilon(n+1,p)/(n+1)!$. This is immediate from the computations in \cite[pp. 203--205]{KSaito:distribution}, where the relationship with Eulerian numbers was not observed. In fact, this is a classical property of Eulerian numbers, cf. \cite{Euleriannumbers}. The right hand side equals $\mu^{-1}\sum_{k=0}^{p}s_{n-k}$ by \eqref{eq:def-sk-bis}.
%
%Similarly, the secondary counterpart can be written as
%\begin{equation}\label{eq:sec-ineq-measures}
%    \int_\RBbb g_{p}d\nu_{\IH_{n}}>\int_{\RBbb}g_{p}d\nu_{f},\quad p<n/2,
%\end{equation}
%where $g_{p}$ is the continuous function defined by
%\begin{equation}\label{eq:function-varphi}
%    g_{p}(t)=(p+1-t)_{+}-(p-t)_{+}=
%    \begin{cases}
%        1   &\text{if }\ t\leq p,\\
%        p+1-t &\text{if }\  p<t\leq p+1,\\
%        0   &\text{if }\ t>p+1.
%    \end{cases}
%\end{equation}

\subsubsection{} The asymptotic sharpness of our conjectures is an application of these equidistribution results. 

\begin{proposition}\label{prop:asymptotic-sharpness}
Let $f_{m}\colon (\CBbb^{n+1},0)\to (\CBbb,0)$ be a sequence of germs of holomorphic functions, defining quasi-homogeneous isolated singularities at the origin, with weights $w_{i}(m)\to 0$ as $m\to\infty$. Then, 
\begin{displaymath}
 \frac{1}{\mu(f_{m})}\sum_{k=0}^{p}s_{n-k}(f_{m})\to \frac{\Upsilon(n+1,p)}{(n+1)!} \quad\text{for }\ p<\floor{n/2}
\end{displaymath}
and
\begin{displaymath}
    \frac{1}{\mu(f_{m})}\left( \widetilde{s}_{n-p}(f_{m}) + \sum_{k=0}^{p-1} s_{n-k}(f_{m})\right)\to \frac{\Upsilon(n+2,p)}{(n+2)!} \quad\text{for }\ p<n/2,
\end{displaymath}
as $m\to\infty$. 
\end{proposition}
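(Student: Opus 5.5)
The plan is to rewrite both quantities as integrals against the spectral measure, and then use K. Saito's equidistribution $\nu_{f_m}\to\nu_{\IH_n}$. The measure-theoretic dictionary from the introduction gives
\[
\frac{1}{\mu(f_m)}\sum_{k=0}^{p}s_{n-k}(f_m)=\nu_{f_m}([0,p+1]),
\qquad
\frac{1}{\mu(f_m)}\Bigl(\widetilde{s}_{n-p}(f_m)+\sum_{k=0}^{p-1}s_{n-k}(f_m)\Bigr)=\int_{\RBbb} g_p\,d\nu_{f_m}.
\]
These follow from \eqref{eq:def-sk-bis}, since $s_{n-k}$ counts spectral numbers in $(k,k+1]$, and from \eqref{eq:sptilde-alphapq}, since $\widetilde{s}_{n-p}=\sum_{p<\lambda\le p+1}(p+1-\lambda)$ and $g_p(\lambda)$ equals $1$ for $\lambda\le p$ and $p+1-\lambda$ on $(p,p+1]$. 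The targets are $\nu_{\IH_n}([0,p+1])=\Upsilon(n+1,p)/(n+1)!$ and $\int g_p\,d\nu_{\IH_n}$.

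\textbf{Step 1: convergence.} By \cite[(2.4) \& (3.7)]{KSaito:distribution}, the Fourier transforms $\widehat\nu_{f_m}$ converge locally uniformly to $\widehat\nu_{\IH_n}$ when the weights tend to $0$. Lévy's continuity theorem then gives weak convergence $\nu_{f_m}\Rightarrow\nu_{\IH_n}$. All the measures are supported in $[0,n+1]$.

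For the secondary statement, $g_p$ is bounded and continuous, so $\int g_p\,d\nu_{f_m}\to\int g_p\,d\nu_{\IH_n}$ directly.

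For the first statement, $\mathbf 1_{[0,p+1]}$ is not continuous. However, its discontinuity set $\{p+1\}$ has $\nu_{\IH_n}$-measure zero, because $\nu_{\IH_n}$ has a density for $n\ge 0$. The portmanteau theorem therefore still gives $\nu_{f_m}([0,p+1])\to\nu_{\IH_n}([0,p+1])$.

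\textbf{Step 2: identifying the limits.} First I will show $\nu_{\IH_n}([0,p+1])=\Upsilon(n+1,p)/(n+1)!$. This is the classical fact that the volume of $\{x\in[0,1]^{n+1}:k<\sum x_i\le k+1\}$ equals $\Euler{n+1}{k}/(n+1)!$. One way to see it is Stanley's measure-preserving map sending the slab to the set of permutations with $k$ descents. Another is to compare the formula $\vol\{\sum x_i\le t\}=\frac1{(n+1)!}\sum_j(-1)^j\binom{n+1}{j}(t-j)_+^{n+1}$ with \eqref{eq:expansion-Euler-numer}. Summing over $k\le p$ gives the claim.

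Second, I will compute $\int g_p\,d\nu_{\IH_n}$ using the identity $g_p(t)=(p+1-t)_+-(p-t)_+$. The key observation is
\[
\int (c-t)_+\,d\nu_{\IH_n}(t)=\int_0^c \nu_{\IH_n}([0,s])\,ds=\vol\Bigl\{(x,y)\in[0,1]^{n+1}\times[0,\infty):\ \textstyle\sum x_i+y\le c\Bigr\}.
\]
The difference between $c=p+1$ and $c=p$ is the volume of $\{x\in[0,1]^{n+1}, y\ge0:\ \sum x_i+y\le p+1,\ \text{and not } \sum x_i+y\le p \}$ restricted appropriately. It is cleaner to use the truncation $y\in[0,1]$: then $g_p(t)=\vol\{y\in[0,1]:t+y\le p+1\}$ holds exactly, as a direct check on each range of $t$ shows. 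Hence
\[
\int g_p\,d\nu_{\IH_n}=\nu_{\IH_{n+1}}([0,p+1])=\frac{\Upsilon(n+2,p)}{(n+2)!}.
\]

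The ranges of $p$ play no role in the convergence itself. They only matter for the strict conjectured inequalities, so the statement holds for the indicated $p$.

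\textbf{Main obstacle.} The only delicate points are two. The first is justifying the passage from Fourier convergence to convergence against the discontinuous indicator, which the portmanteau argument with an absolutely continuous limit handles. The second is the Irwin--Hall/Eulerian identity, which is classical.
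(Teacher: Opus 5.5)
Your proposal is correct and follows the paper's proof. Both arguments apply Lévy's continuity theorem to K. Saito's locally uniform Fourier convergence, use weak convergence for the bounded continuous $g_p$, and use the portmanteau theorem for $\mathbf{1}_{[0,p+1]}$, which is justified because the Irwin--Hall limit is absolutely continuous. The one difference is that you verify the limit identities explicitly, notably $\int g_p\,d\nu_{\IH_{n}}=\nu_{\IH_{n+1}}([0,p+1])$ via $g_p(t)=\mathrm{vol}\{y\in[0,1]: t+y\le p+1\}$, whereas the paper takes them from the measure-theoretic reformulation in the introduction.
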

\begin{proof}
The argument is standard, but we provide it for completeness.

In \cite{KSaito:distribution}, K. Saito proves the locally uniform convergence of Fourier transforms $\widehat{\nu}_{f_{m}}\to\widehat{\nu}_{\IH_n}$. L\'evy's continuity theorem \cite[Theorem 15, Chapter 14]{Fristedt-Gray} entails weak convergence of measures, and hence the convergence of the integrals of bounded continuous functions. Applying this to $g_p$ defined in \eqref{eq:function-gp-intro}, we readily obtain the asymptotic sharpness of the secondary conjectures. For the higher Durfee--Saito conjectures, L\'evy's theorem does not apply to the characteristic function $\mathbf{1}_{[0,p+1]}$, since it is not continuous. However, since $\nu_{\IH_{n}}$ is absolutely continuous, it does not charge the boundary of the interval. Then, by the Portmanteau Theorem \cite[Theorem 2.1]{Billingsley}, we have the convergence $\nu_{f_{m}}([0,p+1])\to \nu_{\IH_{n}}([0,p+1])$, as required.
\end{proof}

\subsection{Convex dominance in the case of quasi-homogeneous singularities}

The measure interpretation of the secondary higher Durfee--Saito conjecture from \eqref{eq:conj-B-tilde-measure} motivates the question of comparing the spectral measure with the Irwin--Hall measure. There are several possible orders between measures, and here we consider the convex order, $\nu \preceq_{\mathrm{cx}} \mu$. We will apply it in the study of the sign of $\kappa_f$ in Section \ref{sec:obstructions}.

\subsubsection{} More precisely, we prove the following proposition: 

\begin{proposition}\label{prop:convex-order-QH}
    Let $f\colon (\CBbb^{n+1},0)\to (\CBbb,0)$ be a quasi-homogeneous function defining an isolated singularity at the origin. Then, for every convex function $\varphi\colon \RBbb\to\RBbb$, we have
    \begin{displaymath}
        \int_{\RBbb}\varphi d \nu_{f} \leq \int_{\RBbb}\varphi d \nu_{\IH_{n}}.
    \end{displaymath}
    Hence, $\nu_{f}\preceq_{\mathrm{cx}}\nu_{\IH_{n}}$.
\end{proposition}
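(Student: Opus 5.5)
The plan is to approximate the Irwin--Hall measure by spectral measures of quasi-homogeneous singularities with ever smaller weights, and to show that the spectral measure can only grow in convex order along such a sequence. I use the characterization of $\preceq_{\mathrm{cx}}$ for compactly supported probability measures: $\nu\preceq_{\mathrm{cx}}\nu'$ if and only if $\nu,\nu'$ have the same mean and the stop-loss transforms satisfy $\int(x-t)_+\,d\nu\le\int(x-t)_+\,d\nu'$ for every $t\in\RBbb$. Both $\nu_f$ and $\nu_{\IH_n}$ are invariant under $x\mapsto n+1-x$, by the symmetry of the spectrum and of the cube. Hence both have mean $(n+1)/2$, and only the stop-loss inequalities need to be checked.

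\textbf{Step 1 (refinement).} For an integer $k\geq 2$, set $f_k(x)=f(x_0^k,\dots,x_n^k)$. This is again quasi-homogeneous with an isolated singularity, with weights $w_i/k$. I claim $\nu_f\preceq_{\mathrm{cx}}\nu_{f_k}$. To see this, put $v=w/k$ and split each factor of \eqref{eq:qhomspectrum} as
\[
\frac{T^{v}-T}{1-T^{v}}=\sum_{j=1}^{k-1}T^{jv}+\frac{T^{w}-T}{1-T^{w}}\cdot\sum_{j=0}^{k-1}T^{jv}.
\]
Expanding the product over $i$ then writes $\mu(f_k)\,\nu_{f_k}$ as a sum of positive discrete pieces convolved with $\mu(f)\,\nu_f$, or with Thom--Sebastiani-type sub-products. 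I will try to regroup these pieces into a single martingale coupling, that is, an explicit mean-preserving spread taking $\nu_f$ to $\nu_{f_k}$.

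In the Brieskorn--Pham case this is transparent. There $\nu_f$ is the convolution of the uniform measures on $\{j/a_i\}_{0<j<a_i}$, and passing from $a$ to $ka$ replaces each factor by a finer uniform grid with the same mean $1/2$ and larger spread. Convex order is preserved by convolution, which is the stability under Thom--Sebastiani sums mentioned in the introduction. So the model case reduces to the one-variable statement that the uniform measure on $\{j/a\}$ is convexly dominated by the uniform measure on $\{j/(ka)\}$, which can be checked with stop-loss functions.

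\textbf{Step 2 (limit).} By K.~Saito's equidistribution, recalled before Proposition \ref{prop:asymptotic-sharpness}, $\nu_{f_{k}}\to\nu_{\IH_n}$ weakly as $k\to\infty$ along powers. All these measures are supported in $[0,n+1]$. On that interval every convex $\varphi$ is bounded and continuous, so the inequalities $\int\varphi\,d\nu_f\le\int\varphi\,d\nu_{f_{k}}$ pass to the limit. Together with Step 1 this gives $\int\varphi\,d\nu_f\le\int\varphi\,d\nu_{\IH_n}$. A convex $\varphi$ on $\RBbb$ is determined on the supports by its restriction to $[0,n+1]$, so this is enough.

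\textbf{Main obstacle.} The hard part is Step 1 for general rational weights. There the individual factors $\frac{T^{w}-T}{1-T^{w}}$ are only signed measures, and only the product over $i$ is positive, so convex order cannot simply be applied factor by factor. My first attempt would use the identity
\[
\rho_w*\mathrm{U}[0,w]=\mathrm{U}[w,1],
\]
where $\rho_w$ denotes the normalized factor. Convolving the product $\nu_f$ with $\ast_i\mathrm{U}[0,w_i]$ turns it into the genuine product measure $\ast_i\mathrm{U}[w_i,1]$. I would then compare stop-loss transforms of $\nu_f$ and $\nu_{f_k}$ through this smoothing, using the explicit piecewise-polynomial form of these transforms. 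Failing that, I would fall back on the monomial-basis description of the spectrum, namely spectral numbers equal to weighted degree plus $\sum_i w_i$. With it I would try to build the coupling directly on lattice points, sending each monomial of the Milnor algebra of $f$ to its $k^{n+1}$ refinements for $f_k$ with the same barycenter.
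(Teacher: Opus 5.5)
\paragraph{Gap in Step 1.} Step 1 carries the entire content of the proposition, and you do not establish it. Beyond the Brieskorn--Pham case you only list strategies you would try. The construction is also flawed outside the diagonal case, because $f(x_0^k,\ldots,x_n^k)$ need not define an isolated singularity. For $D_4$, $f=x^2y+y^3$ has weights $(1/3,1/3)$, yet $f(x^2,y^2)=y^2(x^4+y^4)$ is singular along $\{y=0\}$. So $\nu_{f_k}$ is not in general the spectral measure of an isolated singularity. If you replace it by the formal measure attached to the weights $w_i/k$, you run into exactly the positivity problem you flag yourself: the Thom--Sebastiani-type sub-products $\prod_{i\in S}\rho_{w_i}$ in your expansion are signed. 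As written, your argument proves the statement only for Brieskorn--Pham singularities. There the one-variable comparison of the uniform grids $\{j/a\}$ and $\{j/(ka)\}$ does hold and can be checked with stop-loss functions.

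\paragraph{The missing idea.} It is already in your last paragraph, but you use it in the wrong direction. Recentre your identity $\rho_w\ast\mathrm{U}[0,w]=\mathrm{U}[w,1]$. With $\eta=\ast_{i}\mathrm{U}[-w_i/2,w_i/2]$, a probability measure of mean zero, it gives
\[
\nu_f\ast\eta=\ast_{i=0}^{n}\,\mathrm{U}[w_i/2,\,1-w_i/2].
\]
Do not use this smoothing to compare $\nu_f$ with $\nu_{f_k}$. Use it to compare $\nu_f$ with $\nu_{\IH_n}$ directly:
\begin{itemize}
\item Jensen's inequality gives $\nu_f\preceq_{\mathrm{cx}}\nu_f\ast\eta$, since adding independent mean-zero noise increases convex order.
\item Each $\mathrm{U}[w/2,1-w/2]$ is a contraction of $\mathrm{U}[0,1]$ about the common centre $1/2$, hence $\mathrm{U}[w/2,1-w/2]\preceq_{\mathrm{cx}}\mathrm{U}[0,1]$.
\item Convex order is stable under convolution, so $\ast_i\mathrm{U}[w_i/2,1-w_i/2]\preceq_{\mathrm{cx}}\nu_{\IH_n}$.
\end{itemize}
This is the paper's proof. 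It needs no refinement $f_k$, no monotonicity of the spectrum in the weights, and no appeal to Saito's equidistribution or the limit argument of Step 2.
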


\begin{proof}
We denote by $w_{i}\in (0,1)$ the weights of $f$. We first introduce the probability measure
\begin{displaymath}
  \eta:=
  \mathop{*}_{i=0}^{n} \eta_{w_i},
\end{displaymath}
Here $\eta_{w_i}=U[-w_i/2,w_i/2]$ is the uniform probability measure on the indicated interval, and $\ast$ denotes convolution. We will begin by identifying $\nu_{f}\ast\eta$ with a convolution of uniform measures, by a Fourier transform argument. 

As observed by K. Saito \cite[(3.7)]{KSaito:distribution}, the spectral measure and the spectral polynomial of $f$ are related via Fourier duality. Specifically, using the expression for $\operatorname{Sp}_f$ in \eqref{eq:qhomspectrum}, we find that
\begin{equation}\label{eq:fourier-transform-nu-f}
  \widehat{\nu}_f(t) = \frac{1}{\mu}\operatorname{Sp}_f(e^{it}) = \prod_{i=0}^n e^{it/2}\frac{w_i}{1-w_i} \frac{\sin((1-w_i)t/2)}{\sin(w_i t/2)}.
\end{equation}
On the other hand, for uniform measures, we have
\begin{equation}\label{eq:fourier-transform-uniform}
  \widehat{U[a,b]}(t)=e^{it(a+b)/2}\frac{\sin((b-a)t/2)}{(b-a)t/2}.
\end{equation}
Combining \eqref{eq:fourier-transform-nu-f} and \eqref{eq:fourier-transform-uniform}, we easily check the identity
\begin{displaymath}
  \widehat{\nu_f*\eta}(t) = \prod_{i=0}^{n}\widehat{\theta_{w_i}}(t),
\end{displaymath}
where $\theta_{w_i}  = U[w_i/2, 1-w_i/2].$ By the inverse Fourier transform, we conclude
\begin{equation}\label{eq:equalityofsomeconvolutions}
  \nu_f*\eta=\mathop{*}_{i=0}^{n}
  \theta_{w_i}=: \theta.
\end{equation}

Next, we claim that
\begin{equation}\label{eq:nufpreceqnufeta}
  \nu_f\preceq_{\mathrm{cx}}\nu_f*\eta.
\end{equation}
Indeed, if \(\varphi\colon\mathbb R\to\mathbb R\) is a convex function, then using Jensen's inequality and that $\eta$ has zero expectation, we find
\begin{displaymath}
  \varphi(x)=\varphi\!\left(x+\int_{\mathbb R}y\,d\eta(y)\right)
   \leq \int_{\mathbb R}\varphi(x+y)\,d\eta(y)
\end{displaymath}
Integrating with respect to \(\nu_f\), we obtain
\begin{displaymath}
  \int_{\mathbb R}\varphi\,d\nu_f \leq 
  \int_{\mathbb R}\int_{\mathbb R}
  \varphi(x+y)\,d\eta(y)\,d\nu_f(x)=\int_{\mathbb R}\varphi\,d(\nu_f*\eta),
\end{displaymath}
thus proving the claim.

To conclude, by \eqref{eq:nufpreceqnufeta} and \eqref{eq:equalityofsomeconvolutions} it suffices to prove that 
\begin{equation}\label{eq:convex-order-eta-prime-nu-infty}
 \theta \preceq_{\mathrm{cx}}  \nu_{\IH_{n}}.
\end{equation}
Since the convex order is preserved by convolution \cite[Theorem 3.A.12 (d)]{Stochastic-orders}, it is enough to see that
\begin{displaymath}
    U[w/2, 1-w/2]\preceq_{\mathrm{cx}} U[0,1], 
\end{displaymath}
for any $0<w<1$. For this, let $\varphi$ be a convex function. We perform the change of variables 
\begin{displaymath}
    y = (1-w) t + w \frac{1}{2}
\end{displaymath}
from $y\in [w/2, 1-w/2]$ to $t\in [0,1]$. Then, by convexity of $\varphi$
\begin{displaymath}
    \varphi(y) \leq  (1-w) \varphi(t) + w\ \varphi\left(\frac{1}{2}\right). 
\end{displaymath}
Integrating, we find that
\begin{equation}\label{eq:convexinequality}
    \frac{1}{1-w} \int_{w/2}^{1-w/2} \varphi(y) dy \leq (1-w) \int_0^1 \varphi(t) dt + w \
    \varphi\left(\frac{1}{2}\right).
\end{equation}
On the other hand, by Jensen's inequality for convex functions, we have that $\varphi\left(\frac{1}{2}\right)\leq \int_0^1 \varphi(t) dt$. Inserting this into \eqref{eq:convexinequality} proves the convex dominance \eqref{eq:convex-order-eta-prime-nu-infty}, and completes the proof. 
\end{proof}

\subsubsection{} As an application of the previous proposition, we can prove the secondary Durfee--Saito conjecture ($p=0$) for quasi-homogeneous singularities. This was obtained in \cite{DURF1} as a consequence of the work of Stephen T.  Yau and L. Zhang \cite{Yau-Zhang} on the classical Durfee conjecture, and the suspension trick in Lemma \ref{lemma:ThomSebastianisusp}. The approach based on convex dominance is new.

\begin{corollary}\label{cor:Durfee-QH-alternative}
    The secondary Durfee--Saito conjecture is true for quasi-homogeneous isolated singularities of dimension $n\geq 1$. 
\end{corollary}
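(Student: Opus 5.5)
The plan is to deduce the statement from Proposition~\ref{prop:convex-order-QH} applied to the function $g_0(t)=(1-t)_+-(-t)_+$ from \eqref{eq:function-gp-intro}. We then upgrade the resulting weak inequality to a strict one using the explicit structure of the spectrum of a quasi-homogeneous singularity.

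\textbf{Step 1: reduction to a weak inequality.} Both $\nu_f$ and $\nu_{\IH_n}$ are supported in $[0,n+1]$, since the spectral numbers lie in $(0,n+1)$. On $[0,\infty)$ we have $g_0(t)=(1-t)_+$, which is convex. We therefore replace $g_0$ by the globally convex function $\varphi(t)=(1-t)_+$; it agrees with $g_0$ on the supports, so the integrals against both measures are unchanged. By the measure reformulation \eqref{eq:conj-B-tilde-measure} with $p=0$, together with \eqref{eq:sptilde-alphapq}, the quantity $\int g_0\,d\nu_f$ equals $\widetilde{s}_n/\mu=\widetilde p_g/\mu$. Likewise $\int g_0\,d\nu_{\IH_n}=\Upsilon(n+2,0)/(n+2)!=1/(n+2)!$, which can also be checked directly, since $\int_0^1(1-t)\frac{t^n}{n!}dt=\frac{1}{(n+2)!}$. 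Proposition~\ref{prop:convex-order-QH} then gives $\widetilde p_g\le \mu/(n+2)!$.

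\textbf{Step 2: strictness.} Trace equality through the proof of Proposition~\ref{prop:convex-order-QH}. The chain is
\[
\int\varphi\,d\nu_f\le\int\varphi\,d(\nu_f*\eta)=\int\varphi\,d\theta\le\int\varphi\,d\nu_{\IH_n},
\]
and it suffices to show that one of these inequalities is strict. The cleanest place is the last step, where each factor $U[w_i/2,1-w_i/2]\preceq_{\mathrm{cx}}U[0,1]$ is used. It is easier to argue with an explicit formula than with convolution stability. Write $\int\varphi\,d\nu_{\IH_n}-\int\varphi\,d\theta$ as a telescoping sum, replacing one factor $\theta_{w_i}$ by $U[0,1]$ at a time. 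Each summand is $\int\psi\,d(U[0,1]-\theta_{w_i})$, where $\psi(x)=\int\varphi(x+y)\,d\rho(y)$ for a convolution $\rho$ of the remaining uniform factors. This $\psi$ is convex, and it is strictly convex somewhere on $[0,1]$: since $\rho$ has a positive continuous density near $0$ when $n\ge1$, $\psi''=\rho$ restricted to the relevant range (in the distributional sense, because $\varphi''=\delta_1$) is positive on a neighbourhood of $t=1$. A convex function that is strictly convex on a region charged by $U[0,1]$ yields strict inequality in Jensen's step $\psi(1/2)<\int_0^1\psi$, or in the convex-combination step when $w_i>0$. Hence at least one summand is strictly positive.

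\textbf{Expected obstacle.} The main difficulty is justifying this strictness rigorously, in particular checking that $\psi$ is not affine on $[0,1]$. Concretely, we need $\rho$ to put positive mass where $x+y$ crosses $1$ for $x\in[0,1]$. This holds because $\rho$ is supported on an interval containing a neighbourhood of a point in $[0,n]$ with positive density once $n\ge1$; this is exactly where the hypothesis $n\ge1$ enters. As a fallback, if this is messy, we would instead argue by contradiction. Equality in the first Jensen step, $\nu_f\preceq\nu_f*\eta$, would force $\varphi$ to be affine on every interval $x+\mathrm{supp}\,\eta$ with $x$ a spectral number. This fails whenever some spectral number lies within $\sum w_i/2$ of $1$, and we would handle the remaining case separately.
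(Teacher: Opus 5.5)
Your proposal is correct and follows the paper's route. You get the weak inequality from Proposition~\ref{prop:convex-order-QH} applied to $(1-t)_+$, and strictness from the last step of the chain $\theta\preceq_{\mathrm{cx}}\cdots\preceq_{\mathrm{cx}}\theta_{w_n}*U[0,1]^{*n}\preceq_{\mathrm{cx}}\nu_{\IH_n}$ via strict Jensen. The paper makes your density claim concrete by taking $\rho=U[0,1]^{*n}$ and computing $\Psi(x)=(1-x)^{n+1}/(n+1)!$, which is strictly convex on $[0,1]$ for $n\geq 1$. This is the summand you should single out: for earlier summands the support of $\rho$ starts at $\sum_J w_j/2$, so $\rho$ need not have positive density near $0$.
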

\begin{proof}
    We know that the general secondary higher Durfee--Saito conjecture can be understood as the inequality \eqref{eq:conj-B-tilde-measure} for the difference $g_p$ in \eqref{eq:function-gp-intro}  of two convex functions. In the case $p=0$, which corresponds to the secondary Durfee--Saito conjecture, on the common support $[0,n+1]$ we have $(-t)_+=0$, so $g_0$ agrees there with the convex function $(1-t)_{+}$. Then, Proposition \ref{prop:convex-order-QH} gives $\widetilde{p}_{g}\leq \mu/(n+2)!$.
    
    To deduce that the inequality is strict, it suffices to verify that one of the inequalities in the proof of Proposition \ref{prop:convex-order-QH} is strict for the function $g_{0}$. Now, for this, we notice that the inequality \eqref{eq:convex-order-eta-prime-nu-infty} is a sequence of inequalities 
    \begin{equation}\label{eq:sequence-inequalities-measures}
        \mathop{*}_{i=0}^n \theta_{w_i} \preceq_{\mathrm{cx}} \mathop{*}_{i=1}^n \theta_{w_i} * U[0,1] \preceq_{\mathrm{cx}} \ldots \preceq_{\mathrm{cx}} \theta_{w_{n}} * U[0,1]^{*n}\preceq_{\mathrm{cx}} U[0,1]^{*(n+1)} =\nu_{\IH_{n}}.
    \end{equation}
    Integrating against $g_0(t)=(1-t)_+$, from the first $n$ inequalities we derive
    \begin{displaymath}
        \int_{\RBbb}(1-t)_{+} d\left(\mathop{*}_{i=0}^n \theta_{w_i}\right)(t)\leq \int_{\RBbb} (1-t)_+  d (\theta_{w_{n}} * U[0,1]^{*n})(t) = \int_{\RBbb} \Psi(x) d \theta_{w_{n}} (x),
    \end{displaymath}
    where 
    \begin{displaymath}
        \Psi(x)=\int_{\RBbb} (1-(x+y))_+ d U[0,1]^{*n}(y) = \frac{(1-x)^{n+1}}{(n+1)!},\qquad x\in[0,1].
    \end{displaymath}
    This function is strictly convex on $[0,1]$, and the last inequality in \eqref{eq:sequence-inequalities-measures} yields 
    \begin{displaymath}
        \int_{\RBbb}\Psi(x) d\theta_{w_{n}} < \int_{\RBbb}\Psi(x) dU[0,1],
    \end{displaymath}
    since Jensen's inequality $\Psi\left(\frac12\right)\leq \int_0^1 \Psi(x) dx $, as applied right after \eqref{eq:convexinequality}, is actually strict.
\end{proof}

To conclude this subsection, we note that the corollary above does not generalize to prove the higher Durfee--Saito conjectures, since the functions $g_p$ in \eqref{eq:function-gp-intro} are not convex for $1\leq p\leq n$. We also remark that we are not aware of any non-quasi-homogeneous singularity for which the analogue of Proposition \ref{prop:convex-order-QH} is false.

\section{An obstruction to smooth fillings of Calabi--Yau families}\label{sec:obstructionsmoothfillings}

In this section, we complete the discussion on the BCOV invariants and the smooth filling problem from the introduction. In particular, in Theorem \ref{theorem:nosmoothfilling}, we provide a numerical criterion for the smooth filling problem of one-parameter families of Calabi--Yau manifolds. Here, a Calabi--Yau manifold means a compact K\"ahler manifold with trivial canonical bundle. This criterion is based on the asymptotic behavior of the BCOV invariant of a degeneration of Calabi--Yau varieties, and it involves the Hodge theoretical invariants of the singularities discussed in Section \ref{section:Hodgeinvariants}.

\subsection{Smooth fillings}
We first give a formal definition of a smooth filling. 

\begin{definition}
Let $f\colon X^\times\to\DBbb^\times$ be a proper holomorphic submersion with  Calabi--Yau fibers.
\begin{enumerate}
    \item We say that $f$ admits a smooth filling if, possibly after a ramified base change $t\mapsto t^m$, $f$ extends to a proper submersion $X\to\DBbb$ with  Calabi--Yau fibers.
    \item We say that $f$ admits a birational smooth filling if, possibly after a ramified base change $t\mapsto t^m$, there exists a proper submersion $Y\to\DBbb$, with  Calabi--Yau fibers, such that $Y_t$ is bimeromorphic to $X_t$ for $t\neq 0$. 
\end{enumerate}
If $X^{\times}\to\DBbb^{\times}$ arises as the restriction of a holomorphic map $X\to\DBbb$ of complex analytic spaces, we will also say that $X\to\DBbb$ admits a smooth (resp. birational smooth) filling if $X^{\times}\to\DBbb^{\times}$ does.
\end{definition}

In the definition of smooth filling, the triviality of the canonical bundle of $X_0$ is actually automatic. Indeed, the coherent sheaf $f_\ast K_{X/\DBbb}$ is necessarily a line bundle, and the evaluation map $f^\ast f_\ast K_{X/\DBbb} \to K_{X/\DBbb}$ is an isomorphism outside of the origin. On the one hand, the divisor of this map is a multiple of the central fiber, since the latter is necessarily connected. On the other hand, the divisor of this map cannot contain a multiple of the central fiber, as noted in \cite[\textsection 2.1]{cdg}, and it is hence trivial. It follows that $K_{X/\DBbb} |_{X_0} \simeq K_{X_0}$ is trivial too. The K\"ahler condition on $X_0$ needs however to be imposed, for it is in general not true that the K\"ahler condition is preserved under specialization, cf. \cite{Hironaka:counterexample}.

In the definition of birational smooth filling, we only impose a fiberwise bimeromorphic condition, but we do not impose any relationship between the bimeromorphic modifications of different fibers.   
\subsection{The BCOV invariant and its variation }
 
 We next recall that the BCOV invariant is defined, in \cite{cdg2}, as
\begin{equation}\label{eq:def-bcov-invariant}
    \tau_{\mathrm{BCOV}}(X)=\frac{A(X,\omega)}{B(X,\omega)}\prod_{p}\tau(\Omega_{X}^{p})^{(-1)^{p}p}.
\end{equation}
It is a positive real number. Here $A(X, \omega)$ and $B(X, \omega)$ are certain normalizing factors, introduced to render the expression independent of the auxiliary K\"ahler form $\omega$. The factor $A(X,\omega)$ is a positive real number; when the K\"ahler class is integral, the covolume factor $B(X,\omega)$ is rational.

\subsubsection{} This extends to families. Let $f\colon X\to S$ be a proper submersion of complex manifolds, with Calabi--Yau fibers. It follows from \cite[Section 5]{cdg2} that if $X$ admits a K\"ahler metric, or more generally $f$ is a K\"ahler morphism, the BCOV invariant is a smooth function on $S$. More generally, we have the following statement, suggested to us by Ken--Ichi Yoshikawa: \begin{proposition}\label{prop:bcovinvsmooth}
Let $f\colon X\to S$ be a proper submersion of complex manifolds, with  Calabi--Yau fibers. Then, the logarithm of the BCOV invariant of the fibers defines a smooth function on $S$. 
\end{proposition}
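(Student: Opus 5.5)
The plan is to reduce to the Kähler case by working locally on $S$. Smoothness is a local property, so it suffices to work near a point $s_0\in S$. Two facts will be combined: the known smoothness of $\tau_{\mathrm{BCOV}}$ for Kähler morphisms from \cite[Section 5]{cdg2}, and the independence of $\tau_{\mathrm{BCOV}}(X_s)$ from the auxiliary Kähler form on each fiber. The fibers are compact Kähler by assumption. What is missing in general is a global Kähler form on $X$, or even a smooth family of fiberwise Kähler forms. I therefore plan to construct, over a small neighborhood $U$ of $s_0$, a smooth family of closed real $(1,1)$-forms $\omega_s$ on $X_s$ which are positive, hence Kähler.

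The construction proceeds in three steps. First, shrink $S$ to a contractible Stein polydisc $U$ around $s_0$, so that $f$ is $\mathcal{C}^\infty$-trivial over $U$ by Ehresmann's theorem, $X_U\simeq X_{s_0}\times U$ differentiably. Second, fix a Kähler form $\omega_0$ on $X_{s_0}$. Use the Kodaira--Spencer stability theorem for Kähler structures: small deformations of a compact Kähler manifold are Kähler, and one can choose the Kähler forms $\omega_s$ depending smoothly on $s$. Concretely, one takes the $d$-closed form representing the class $[\omega_0]\in H^2(X_s,\RBbb)$ under the Gauss--Manin identification and projects it to its $(1,1)$-component using harmonic theory for a smooth family of hermitian metrics. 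The Calabi--Yau hypothesis helps here. Since $h^{2,0}$ is locally constant in smooth families of Kähler manifolds, the class $[\omega_0]$ stays of type $(1,1)$ only on a subvariety in general, but Kodaira--Spencer's theorem avoids this by perturbing the class if needed. Third, I will check that the quantities entering \eqref{eq:def-bcov-invariant} depend smoothly on $s$ for this choice: the torsions $\tau(X_s,\Omega^p_{X_s})$, the normalizing factors $A(X_s,\omega_s)$ and $B(X_s,\omega_s)$.

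For the torsions, the Bismut--Gillet--Soulé theory extends to families equipped with a smooth fiberwise Kähler structure, the ``Kähler fibration'' setting. Hence the Quillen metric on $\lambda(\Omega^p_{X/S})$ is smooth, and so is the $L^2$-metric, because the Hodge numbers are locally constant. The same reasoning as for \eqref{eq:torsion-anal-p} then gives smoothness of the torsions.

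For $A$ and $B$, each is an expression in volumes, covolumes of lattices in $H^\bullet(X_s,\ZBbb)$ with respect to $L^2$ metrics built from $\omega_s$, and integrals of powers of $\omega_s$. All of these are smooth in $s$ by fiberwise Hodge theory in smooth families.

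The independence of $\tau_{\mathrm{BCOV}}$ from $\omega_s$ shows that the locally defined functions agree on overlaps, so $\log\tau_{\mathrm{BCOV}}$ is a globally defined smooth function.

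I expect the main obstacle to be the second step, namely producing a \emph{smoothly varying} family of fiberwise Kähler forms with a well-behaved cohomology class, without a global Kähler form. The first thing I would try is the Kodaira--Spencer stability argument applied to a smoothly varying family of hermitian metrics. If the class cannot be kept fixed, the curvature-type formula for the Quillen metric only requires $\omega_s$ to be fiberwise Kähler and $d$-closed along fibers, not closed on the total space. Bismut's refined curvature theorem for non-Kähler total spaces with fiberwise Kähler metrics should still yield smoothness.
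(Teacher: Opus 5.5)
Your proposal is correct and is essentially the paper's argument. Kodaira--Spencer's stability proof gives, after shrinking $S$, a smooth metric on $T_{X/S}$ that is K\"ahler on each fiber (its class need not stay fixed); then $A$ and $B$ are smooth as fiber integrals, and the torsions are smooth. The paper obtains the latter directly from a uniform spectral gap (constant $h^{p,q}$ and continuously varying eigenvalues) together with Bismut--Freed's smoothness of $\zeta'(0)$ above the gap, which is exactly the mechanism behind the smooth Quillen and $L^2$ metrics you invoke.

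So only a smooth family of fiberwise K\"ahler metrics is needed: no closedness on the total space, no curvature formula, and the Calabi--Yau condition plays no role in producing $\omega_s$. The fallback in your last paragraph is therefore unnecessary. Note also that the BGS curvature formula does require a closed form on $X$ (that is what ``K\"ahler fibration'' means there), but smoothness of the Quillen metric does not.
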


\begin{proof}
        Let $0\in S$ be fixed. Since the fiber $X_0$ is K\"ahler, a result by Kodaira--Spencer \cite[Theorem 15]{Kodaira-Spencer} ensures that the nearby fibers also admit a K\"ahler metric. This is already assumed in the proposition, but in fact their argument proves that, after possibly shrinking $S$, $T_{X/S}$ admits a $\Ccal^{\infty}$ metric whose restriction to $X_s$ is K\"ahler, for $s\in S$. We denote the corresponding K\"ahler form by $\omega_s$. We compute the BCOV invariant of the fibers using these metrics. 
        
        The definition \eqref{eq:def-bcov-invariant} involves three terms. For the normalizing factors $A(X_s, \omega_s)$ and $B(X_s, \omega_s)$, the very definition in \cite[Section 5]{cdg2} shows that they can be expressed as fiber integrals of relative differential forms on $X$, and hence they define smooth functions of $s\in S$. By the definition of analytic torsion  \eqref{def:analytictorsion}, the weighted product of torsions $\tau(\Omega^{p}_{X_{s}})$ is a weighted product of exponentials of expressions of the form $\zeta'_{p,q,s}(0)$ as in  \eqref{def:zetafunction}. We next tackle those. 
        
        Since the fibers are K\"ahler, $\dim \ker \Delta^{p,q}_{\overline{\partial},s}= h^{p,q}(X_s)$ is constant and the eigenvalues of $\Delta^{p,q}_{\overline{\partial},s}$ depend continuously on $s$ by \cite[Theorem 2]{Kodaira-Spencer}. Hence, after possibly shrinking $S$ around $0$, we can find $b>0$ such that $\Delta^{p,q}_{\overline{\partial},s}$ has no eigenvalues $\lambda > 0$ with $\lambda < b$. This means that $\zeta_{p,q,s}$ is also the spectral zeta function built from the eigenvalues strictly larger than $b$. For this function, it follows from \cite[Section g]{Bismut--Freed} that $\zeta_{p,q,s}^{\prime}(0)$ is smooth in $s$, hence concluding the proof.
    \end{proof}

\subsubsection{} The fact that the invariant is moving smoothly in such families gives an obstruction to smooth fillings, as stated in the following proposition. 

\begin{proposition}\label{prop:criterion-smooth-filling}
Let $f\colon X^\times\to\DBbb^\times$ be a proper holomorphic submersion with  Calabi--Yau fibers. Suppose that the function $t\mapsto\log\tau_{\mathrm{BCOV}}(X_t)$ does not extend continuously at $t=0$. Then $f$ does not admit a smooth filling. Moreover, if the fibers of $f$ are projective, then $f$ does not admit a birational smooth filling.
\end{proposition}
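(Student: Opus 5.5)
We argue by contraposition in both parts. For the smooth filling statement, suppose that after a base change $t\mapsto t^m$ the family $f$ extends to a proper submersion $X\to\DBbb$ whose fibers, including $X_0$, are Calabi--Yau, hence K\"ahler. Proposition \ref{prop:bcovinvsmooth} then shows that $s\mapsto\log\tau_{\mathrm{BCOV}}(X_s)$ is smooth on $\DBbb$, and in particular continuous at $0$. The BCOV invariant depends only on the individual fiber. So the function $t\mapsto\log\tau_{\mathrm{BCOV}}(X_t)$ of the original family is obtained from the extended one by composing with a branch of $t^{1/m}$. Its value at $t$ is $\log\tau_{\mathrm{BCOV}}(X_s)$ for any $s$ with $s^m=t$, and these values agree because the fibers are the same. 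Continuity of the extended function at $s=0$ therefore gives a continuous extension of the original function at $t=0$, which contradicts the hypothesis.

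For the birational statement, suppose that after a base change there is a proper submersion $Y\to\DBbb$ with Calabi--Yau fibers such that $Y_t$ is bimeromorphic to $X_t$ for every $t\neq 0$. Since $X_t$ is projective, $Y_t$ is Moishezon, and being K\"ahler it is projective by Moishezon's theorem. We then invoke the birational invariance of the BCOV invariant for projective Calabi--Yau manifolds \cite{ZhangFu-birationalBCOV}: $\tau_{\mathrm{BCOV}}(X_t)=\tau_{\mathrm{BCOV}}(Y_t)$ for all $t\neq 0$. By the first part applied to $Y$, the function $t\mapsto\log\tau_{\mathrm{BCOV}}(Y_t)$ is continuous at $0$, so $\log\tau_{\mathrm{BCOV}}(X_t)$ extends continuously as well, again a contradiction.

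The main point requiring care is that the bimeromorphic identifications are only fiberwise and need not vary holomorphically in $t$. This causes no difficulty, because birational invariance is a pointwise equality of numbers for each $t$. We also need the fibers $Y_t$ to lie within the scope of \cite{ZhangFu-birationalBCOV}, which is why projectivity of the fibers of $f$ is assumed; the Moishezon argument above transfers projectivity to $Y_t$. A second point is that the BCOV invariant is independent of the K\"ahler metric, so the choices of metrics used in Proposition \ref{prop:bcovinvsmooth} do not affect the function being compared.
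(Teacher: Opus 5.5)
Your proof is correct and follows essentially the same route as the paper. You use that continuity at $0$ is unaffected by the base change $t\mapsto t^m$, apply Proposition \ref{prop:bcovinvsmooth} to the filled family, and in the birational case use Moishezon plus K\"ahler to get projectivity of $Y_t$, so that the fiberwise birational invariance of \cite{ZhangFu-birationalBCOV} gives $\tau_{\mathrm{BCOV}}(X_{t^m})=\tau_{\mathrm{BCOV}}(Y_t)$. In the second part you write $X_t$ where $X_{t^m}$ is meant, and the base change has to be undone once more; your first paragraph already covers that step.
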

\begin{proof}
Let $m\geq 1$. We observe that $t\mapsto\log\tau_{\mathrm{BCOV}}(X_t)$ extends continuously at $0$ if, and only if, $t\mapsto\log\tau_{\mathrm{BCOV}}(X_{t^{m}})$ extends continuously at $0$. Therefore, the first claim follows from the definition of smooth filling and Proposition \ref{prop:bcovinvsmooth}. 

For the second claim, suppose that $f$ admits a birational smooth filling $Y\to \DBbb$. For some fixed $m\geq 1$, the fibers $Y_{t}$ are bimeromorphic to $X_{t^m}$, for $t\neq 0$. Since the latter is projective, we infer that $Y_{t}$ is Moishezon. Since $Y_t$ is K\"ahler by assumption, it is thus projective and birational to $X_{t^{m}}$. By \cite{ZhangFu-birationalBCOV}, the BCOV invariant of projective Calabi--Yau manifolds is a birational invariant, hence $\tau_{\mathrm{BCOV}}(X_{t^{m}})=\tau_{\mathrm{BCOV}}(Y_{t})$. We conclude by Proposition \ref{prop:bcovinvsmooth} applied to $Y\to\DBbb$.
\end{proof}

In the second part of the proposition, we note that the projectivity assumption on the fibers does not in general entail the existence of a relatively ample line bundle on $X^\times$. We refer to \cite{Kollar:Seshadri} for a study of openness of the projectivity condition and for references to counterexamples.

\subsection{The asymptotic behavior of the BCOV invariant}\label{subsec:generasymp}
We apply the previous results on the behavior of analytic torsions of sheaves of differential forms (Section \ref{section:torsion-differentials}) to the study of the BCOV invariant.

\subsubsection{} We begin by recalling, and slightly generalizing, \cite[Theorem 6.5]{cdg2} on the asymptotic behavior of the BCOV invariant for projective degenerations of Calabi--Yau manifolds. 

\begin{lemma}\label{lemma:existence-kappa}
Let $f\colon X\to\DBbb$ be a projective morphism of complex manifolds, such that $X_t$ is Calabi--Yau for $t\neq 0$. Then, if either $f$ arises as the restriction of a morphism of algebraic varieties, or $X_0$ has isolated singularities, we have
\begin{equation}\label{eq:coefficient-kappa}
    \log \bcov{t}  = \kappa_{f}\cdot \log|t|^{2}+o(\log|t|),\quad\text{as}\quad t\to 0,
\end{equation}
for some $\kappa_f\in\QBbb$.
\end{lemma}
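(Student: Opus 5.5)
The plan is to expand $\log\tau_{\mathrm{BCOV}}(X_t)$ using the definition \eqref{eq:def-bcov-invariant} and treat its factors one at a time. The three pieces are the normalizing factors $A(X_t,\omega_t)$ and $B(X_t,\omega_t)$, and the weighted product $\prod_p \tau(X_t,\Omega^p_{X_t})^{(-1)^p p}$. We fix a K\"ahler form $\omega$ on $X$; after shrinking $\DBbb$ this is possible since $f$ is projective. By \cite{cdg2}, the BCOV invariant is independent of $\omega$, so we may compute with the restrictions $\omega_t=\omega|_{X_t}$.

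\textbf{Step 1: the torsion factors.} Each $\log\tau(X_t,\Omega^p_{X_t})$ is $\log\|\sigma\|^2_Q-\log\|\sigma\|^2_{L^2}$, where $\sigma$ is a trivializing section of the K\"ahler extension $\lambda(\widetilde\Omega^p_{X/\DBbb})$.

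In the isolated case, Proposition \ref{prop:asymp-Quillen} gives the Quillen term as a rational multiple of $\log|t|^2$ plus a continuous function. Proposition \ref{prop:L2asymptotic}, valid without algebraicity by Lemma \ref{lemma:isolated-removal-algebraicity}, gives the $L^2$ term as
\begin{displaymath}
\bigl(\alpha^p+(-1)^p\chi(F^{n-p+1}\varphi_f)\bigr)\log|t|^2+\beta^p\log\log|t|^{-1}+O(1).
\end{displaymath}
The coefficient $\alpha^p$ is rational by \eqref{eq:alphapq}, and $\beta^p$ is an integer. Since $\log\log|t|^{-1}=o(\log|t|)$, each torsion is a rational multiple of $\log|t|^2$ up to $o(\log|t|)$. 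Theorem \ref{thm:analytictorsionasymptotics} already packages this.

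In the algebraic case with non-isolated singularities, Proposition \ref{prop:L2asymptotic} applies directly. For the Quillen metric, I would invoke Yoshikawa's theorem \cite{yoshikawa}, which applies to the K\"ahler extension $\lambda(\widetilde\Omega^p_{X/\DBbb})$ of an algebraic family. It again gives a rational logarithmic coefficient, namely an intersection number on the blow-up of the Jacobian ideal as in \eqref{eq:logOmegap}, plus a continuous remainder. This is essentially \cite[Theorem 6.5]{cdg2}.

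\textbf{Step 2: the normalizing factors.} I expect this to be the main obstacle. In \cite{cdg2}, $A$ and $B$ are built from $L^2$ norms of integral or rational cohomology classes, covolumes of lattices $H^k(X_t,\mathbb{Z})$ in harmonic forms, and $\int_{X_t}\omega^n$. Their asymptotics are governed by Schmid-type estimates, with monodromy eigenvalue contributions as in \cite{Sabbah-Schnell}. These yield rational multiples of $\log|t|^2$ plus $\log\log$ terms plus $O(1)$.

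I would argue as in \cite[Section 6]{cdg2}. First reduce to unipotent monodromy by a base change $t\mapsto t^m$; this multiplies the coefficients by $m$ and so preserves rationality. Then apply the nilpotent orbit theorem to the flat integral lattices. This expresses $\log B$ as a sum of $\log\|\cdot\|^2_{L^2}$ of flat multivalued sections, whose growth is $\log\log$ only. The volume term $\int_{X_t}\omega^n$ is continuous and nonvanishing. For the isolated case, I only need to check that these steps use nothing beyond the Hodge-theoretic degeneration of the VHS on $\DBbb^\times$. That holds because $f$ is projective, so no algebraicity of the base is needed.

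\textbf{Step 3: conclusion.} Summing the contributions with the weights $(-1)^p p$ gives \eqref{eq:coefficient-kappa}, with $\kappa_f$ equal to a $\QBbb$-linear combination of rational numbers, hence $\kappa_f\in\QBbb$.
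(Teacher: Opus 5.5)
Your Step 1 is sound, and in the algebraic case you could simply cite \cite[Theorem 6.5]{cdg2} for the whole statement, as the paper does. The gap is in Step 2, which the isolated, non-algebraic case actually requires: you misdescribe the normalizing factor $A$. Besides $\mathrm{vol}(X_t,\omega_t)$ and $\|\eta_t\|_{L^2}$ (the latter is Hodge-theoretic, so your Schmid-type reasoning applies to it), $A(X_t,\omega_t)$ contains, up to sign and normalization, the exponential of
\begin{displaymath}
\frac{1}{12}\int_{X_t}\log\Bigl(\frac{i^{n^2}\eta_t\wedge\overline{\eta}_t}{\omega^n/n!}\Bigr)\,c_n(TX_t,\omega_t),
\end{displaymath}
where $\eta$ is a frame of $f_{\ast}K_{X/\DBbb}$. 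This is a pointwise quantity integrated against a Chern--Weil form, and neither Schmid's norm estimates nor the nilpotent orbit theorem say anything about it. Near an isolated critical point, $\eta_t$ is the residue $\Omega/df$ of a local generator $\Omega$ of $K_X$. So the integrand is $\log\|\Omega\|^2_{\omega}-\log\|df\|^2_{\omega}$ up to a constant, and it blows up exactly at the critical points. That is also where $c_n(TX_t,\omega_t)$ fails to converge smoothly as $t\to 0$. When $X_0$ is not integral, $\eta$, viewed as a section of $K_{X/\DBbb}$, also vanishes along the divisor $B$ of $f^{\ast}f_{\ast}K_{X/\DBbb}\to K_{X/\DBbb}$. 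This is where the term $\frac{(-1)^n}{12}\int_B c_n(\Omega_X)$ of Theorem \ref{thm:kappaBCOVgeneral} originates.

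Proving that this integral is $c\log|t|^2+o(\log|t|)$ with $c\in\QBbb$ needs a local analysis at the singularities. It is of the same nature as Yoshikawa's theorem on Quillen metrics, resolving the Gauss map $x\mapsto[df_x]$ by the blow-up of the Jacobian ideal. The paper does not redo this analysis. In the isolated case it combines Theorem \ref{thm:analytictorsionasymptotics} with the asymptotic behaviour of $A$ established in the proof of \cite[Proposition 4.2]{cdg}. In Theorem \ref{thm:kappaBCOVgeneral} it uses \cite[Corollary 4.9]{cdg} for the Quillen--BCOV metric, which absorbs $A$. Without such an input, your argument establishes neither the asymptotic form of $\log A$ nor the rationality of its coefficient, so the conclusion for $\kappa_f$ does not follow in the isolated case. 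For $B$, by contrast, your nilpotent-orbit argument is more than needed. Choosing $\omega$ with integral class on the fibers, which is possible since $f$ is projective, makes $B(X_t,\omega_t)$ locally constant on $\DBbb^{\times}$.
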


\begin{proof}
When $f$ is the restriction of a morphism of algebraic varieties, this is \cite[Theorem 6.5]{cdg2}. If $X_0$ has only isolated singularities, the proof goes along the same lines, using Theorem \ref{thm:analytictorsionasymptotics} above. We omit the details.
\end{proof}

The following is an immediate application of the previous lemma and Proposition \ref{prop:criterion-smooth-filling}.
\begin{theorem}\label{theorem:nosmoothfilling}
Let the assumptions and notation be as in Lemma \ref{lemma:existence-kappa}. Then, if $\kappa_{f} \neq 0$, the family $X\to\DBbb$ does not admit a birational smooth filling.
\end{theorem}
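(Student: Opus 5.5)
The plan is to argue by contraposition, combining Lemma \ref{lemma:existence-kappa} with Proposition \ref{prop:criterion-smooth-filling}. Suppose $\kappa_f\neq 0$. By Lemma \ref{lemma:existence-kappa}, which applies under either of its two hypotheses (algebraic base, or isolated singularities in $X_0$), we have
\[
\log\tau_{\mathrm{BCOV}}(X_t)=\kappa_f\log|t|^2+o(\log|t|).
\]
Dividing by $\log|t|^2$ and letting $t\to 0$, the quotient tends to $\kappa_f\neq 0$. Since $\log|t|^2\to-\infty$, this forces $|\log\tau_{\mathrm{BCOV}}(X_t)|\to+\infty$. In particular $t\mapsto\log\tau_{\mathrm{BCOV}}(X_t)$ admits no continuous extension across $t=0$.

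Next we check the hypotheses of Proposition \ref{prop:criterion-smooth-filling} for $f_{\mid X^\times}\colon X^\times\to\DBbb^\times$. It is a proper holomorphic submersion with Calabi--Yau fibers, since $f$ is projective and submersive over $\DBbb^\times$. The fibers $X_t$, $t\neq 0$, are projective, because $f$ is a projective morphism. The first step supplies exactly the non-extendability assumption. The ``moreover'' part of Proposition \ref{prop:criterion-smooth-filling} then gives the conclusion that there is no birational smooth filling. By the definition of fillings, this is what it means for $X\to\DBbb$ not to admit one.

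The one point requiring care is base change, since the definition of a filling allows a substitution $t\mapsto t^m$. This is already handled inside the proof of Proposition \ref{prop:criterion-smooth-filling}, which observes that continuity at $0$ is unaffected by base change. Alternatively, one can note directly that $\kappa_{f}$ gets multiplied by $m$, so it stays nonzero. Apart from this, the argument is formal and I expect no real obstacle; all the substantive input (birational invariance of $\tau_{\mathrm{BCOV}}$ and smoothness in proper families) is contained in the cited proposition.
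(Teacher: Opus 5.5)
Your proposal is correct and follows the paper's route: the paper deduces the theorem directly from Lemma~\ref{lemma:existence-kappa} and the birational part of Proposition~\ref{prop:criterion-smooth-filling}. As you argue, $\kappa_f\neq 0$ makes $\log\tau_{\mathrm{BCOV}}(X_t)$ unbounded, hence not continuously extendable, and the fibers are projective because $f$ is.
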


\subsubsection{} Suppose now that $f\colon X\to\DBbb$ is a flat projective morphism of complex manifolds, with Calabi--Yau fibers $X_t$ for $t\neq 0$, but $X_0$ possibly singular. We wish to provide expressions for the coefficient $\kappa_f$ in \eqref{eq:coefficient-kappa}.

In preparation for the statement below, consider the evaluation map $f^\ast f_\ast K_{X/\DBbb} \to K_{X/\DBbb}$. The Calabi--Yau condition ensures this is an isomorphism outside the origin, and we denote by $B$ the divisor of the morphism.

The main result is the following, which generalizes the previously known general formulas in dimension $n=3,4$, treated in \cite[Section 7.3]{cdg2}. It expresses the numerical invariant $\kappa_f$ in terms of a local contribution $\kappa_f^{\mathrm{loc}}$ and a global contribution. The local contribution depends only on the vanishing cycles complex. The global contribution involves the topological Euler characteristic of a general fiber, and the divisor $B$.

\begin{theorem}\label{thm:kappaBCOVgeneral}
Let $f\colon X\to\DBbb$ be a degeneration of Calabi--Yau varieties as above, of dimension $n\geq 2$. Assume furthermore that $f$ arises as the restriction of a projective morphism of algebraic varieties. Then,
\begin{displaymath}
    \kappa_{f}=\kappa_{f}^{\mathrm{loc}}-\left(\frac{\chi(X_{\infty})}{12}\alpha^{0,n}+\frac{(-1)^{n}}{12}\int_{B}c_{n}(\Omega_{X})\right),
\end{displaymath}
where:
\begin{enumerate}
    \item if $n$ is even, then
\begin{displaymath}
    \kappa_{f}^{\mathrm{loc}}=-\frac{n-2}{24}  \chi(\varphi_f) + \sum_{p=0}^{n/2-1} (-1)^p (n-2p) \alpha^p + \sum_{p=1}^{n/2-1}(n-2p) \chi(F^{n-p+1} \varphi_f).
\end{displaymath}
    \item if $n$ is odd, then:
\begin{displaymath}
    \kappa_{f}^{\mathrm{loc}}=-\frac{n+1}{24} \chi(\varphi_f) +\sum_{p=0}^{\floor{n/2}} (-1)^p (n-2p) \alpha^p + \sum_{p=1}^{\floor{n/2}}(n-2p) \chi(F^{n-p+1} \varphi_f).
\end{displaymath}
\end{enumerate}
In particular, if $X_0$ is integral and the singularities are Du Bois, then $\kappa_{f}=\kappa_{f}^{\mathrm{loc}}$ only depends on an analytic neighborhood of the singularities in $X$. 
\end{theorem}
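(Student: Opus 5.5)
The plan is to start from the general asymptotic formula for the BCOV invariant in \cite[Theorem 6.5]{cdg2}. It expresses $\kappa_f$ as a weighted sum of the logarithmic coefficients of the Quillen and $L^2$ norms of the Kähler extensions $\lambda(\widetilde{\Omega}^p_{X/\DBbb})$, plus a correction from the normalizing factors $A$ and $B$. By definition \eqref{eq:def-bcov-invariant}, $\log\tau_{\mathrm{BCOV}}=\sum_p(-1)^p p\log\tau(\Omega^p)+\log A-\log B$. Writing $\log\tau(\Omega^p)=\log\|\sigma_p\|_Q^2-\log\|\sigma_p\|_{L^2}^2$ for trivializations $\sigma_p$ of the Kähler extensions, I then substitute three earlier results: the Quillen asymptotics (the general algebraic version of Proposition \ref{prop:asymp-Quillen}, i.e. Yoshikawa's theorem as in \cite[Theorem 5.11]{FLY}), Proposition \ref{prop:L2asymptotic} for the $L^2$ part, and Theorem \ref{thm:mu-p-general} for the comparison integers $\mu_p=(-1)^{p-1}\chi(F^{n-p+1}\varphi_f)$.

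The first step is to identify the Quillen term in the non-isolated case. Yoshikawa's formula gives the coefficient as an integral over the exceptional divisor of a Gauss-type blowup. After summing with weights $(-1)^pp$, I expect this to reduce, as in \cite[Section 7]{cdg2}, to $\frac{1}{12}$ times a localized Chern number. That Chern number should split into $\chi(\varphi_f)$, which is $\pm(\chi(X_\infty)-\chi(X_0))$, plus a term $\int_B c_n(\Omega_X)$ coming from the comparison $f^\ast f_\ast K_{X/\DBbb}\to K_{X/\DBbb}$. The $-\frac{\chi(X_\infty)}{12}\alpha^{0,n}$ term should come from the normalization factor $A$, which involves the $L^2$ norm of the holomorphic volume form raised to the power $\chi/12$; the exponent of that $L^2$ norm is governed by $\alpha^{0,n}$ via Schmid.

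The second step is to collect the $\alpha^p$ and $\mu_p$ contributions with weights $(-1)^p p$. I will then use Serre-type symmetries to fold the sum over $p$ onto the range $p<n/2$. Concretely, Proposition \ref{prop:mup+-mup} relates $\mu_p$ to $\mu_{n-p}$ via upper/lower extensions, and $\alpha^{p}$ is related to $\alpha^{n-p}$ through $\lambda\mapsto n+1-\lambda$ together with $\alpha^{p,q}+\alpha^{q,p}=\dim\Gr_F^p H_{\neq1}$. This folding replaces the weight $p$ by $n-2p$ and produces the $\chi(\varphi_f)$ multiples $-\frac{n-2}{24}$ or $-\frac{n+1}{24}$, depending on the parity of $n$. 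The middle term $p=n/2$ for even $n$ cancels, and the extra contributions for odd $n$ explain the different constant.

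The main obstacle is the bookkeeping in the folding step. In particular I must check that the $\Gr_F^p\varphi_{f,\neq1}$ terms cancel against the difference between $\alpha^p$ and $\alpha^{n-p}$, and that the constants $\frac{n-2}{24}$ versus $\frac{n+1}{24}$ come out right. My first move is to verify the formula against the known cases $n=3,4$ from \cite[Section 7.3]{cdg2} and the $A_1$ singularity. For the final claim, when $X_0$ is integral and Du Bois, $X_0$ has rational-type behaviour on $F^n$: $\alpha^{0,n}=0$ because $\Gr_F^n$ of the limit equals that of $X_0$, so the monodromy is trivial on it. Also $B=0$, because $K_{X/\DBbb}$ is generated by the extended volume form when $X_0$ is Du Bois and hence has log canonical behaviour. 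The global term therefore vanishes, and $\kappa_f^{\mathrm{loc}}$ depends only on $\varphi_f$, which is local.
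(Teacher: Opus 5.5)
Your derivation of the main formula is essentially the paper's. The paper also proceeds in four steps:
\begin{itemize}
\item it writes $\tau_{\mathrm{BCOV}}$ as the ratio of the Quillen--BCOV and $L^2$--BCOV metrics on the K\"ahler extension of $\bigotimes_p\lambda(\Omega^p)^{(-1)^pp}$;
\item it quotes the singularity of the Quillen--BCOV metric, i.e.\ the Quillen metric corrected by $A$, from \cite[Corollary 4.9]{cdg}, instead of redoing Yoshikawa's computation;
\item it folds the terms with $p>n/2$ by Serre duality, at the cost of a $\chi(\varphi_f)$ discrepancy;
\item it then applies Proposition~\ref{prop:L2asymptotic}.
\end{itemize}
Two repairs are needed in your version. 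First, Proposition~\ref{prop:mup+-mup} compares $\mu_p^+$ with $\mu_p$ for the same $p$, not $\mu_p$ with $\mu_{n-p}$. What links $p$ and $n-p$ is the duality \eqref{eq:upperextensioniso}. Combined with Theorem~\ref{thm:mu-p-general} and Proposition~\ref{prop:mu-p-upper}, it gives $\mu_p^+-(-1)^{n+1}\mu_{n-p}=(-1)^{p-1}\chi(\varphi_f)$, which is exactly the discrepancy the paper takes from \cite[Proposition 3.3]{cdg2}. Second, you never deal with the covolume factor $B(X_t,\omega)$ in $\log\tau_{\mathrm{BCOV}}$. Choose $\omega$ with integral class on the fibres; then this factor is locally constant on $\DBbb^{\times}$ and does not contribute.

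The genuine gap is in your last paragraph. Both justifications for the vanishing of the global term are wrong, even though the conclusions hold under the stated hypotheses.

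\emph{The claim $\alpha^{0,n}=0$.} By definition $\alpha^{0,n}$ is computed on $\Gr_F^0H^n(X_\infty)$. Your claim that $\Gr_F^n$ of the limit agrees with that of $X_0$ is a rationality statement, and it fails for Du Bois singularities that are not rational. For example, let a K3 pencil with smooth total space acquire a simple elliptic point, locally $x^3+y^3+z^3=t$. Then $\Gr_F^2H^2(X_0)\hookrightarrow H^{2,0}(\widetilde X_0)=0$ for a resolution $\widetilde X_0$, whereas $\Gr_F^2H^2(X_\infty)\simeq\CBbb$; here $p_g=s_2=1$. The monodromy is still trivial on this piece, but not for the reason you give. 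What the Du Bois condition controls is $\Gr_F^0$. For isolated singularities, Du Bois is equivalent to $s_0=0$, which forces $\widetilde s_n=\alpha^{0,n}=0$. In general, the paper uses $\alpha^{0,n}=1-\mathrm{lct}(X,X_0)$ from \cite[Theorem A]{cdg}, together with M.~Saito's theorem that a Du Bois hypersurface has minimal exponent at least $1$.

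\emph{The claim $B=0$.} This does not follow from Du Bois or log canonical singularities. Take a semistable model with $K_X\sim 0$ and blow up a smooth curve lying in one component and disjoint from the others. The new central fibre is reduced snc, hence Du Bois, but $B=E\neq 0$, where $E$ is the exceptional divisor. What is needed is integrality of $X_0$. Since $B$ is supported on $X_0$, integrality gives $B=aX_0$. Moreover $a=0$, since otherwise the generator of $f_*K_{X/\DBbb}$ would be divisible by $t$.
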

\begin{proof}
The proof goes along the lines of the proof of \cite[Theorem 7.6 \& Theorem 7.13]{cdg2}, and we only review the key points.  

We need to control the expression \eqref{eq:def-bcov-invariant}, which in fact, by its very definition in \cite{cdg2}, naturally can be written as a quotient of two natural metrics $\frac{h_{Q,BCOV}}{h_{L^2, BCOV}}$ on the BCOV bundle 
\begin{displaymath}
    \bigotimes_{p} \lambda(\Omega_{X^{\times}/\DBbb^{\times}}^p)^{(-1)^{p}p}.
\end{displaymath} 
In particular, we give the asymptotics of both metrics on the K\"ahler extension $\bigotimes_{p} \lambda(\widetilde{\Omega}_{X/\DBbb}^p)^{(-1)^{p}p}$, defined as in \textsection \ref{subsec:Kahler-extension}. For the Quillen-BCOV metric, we use the asymptotic formula in \cite[Corollary 4.9]{cdg}. After introducing a K\"ahler form with integral cohomology class on the smooth fibers, the asymptotics of the $L^{2}$-BCOV metric reduce to the asymptotics of the $L^{2}$-metric, since the $B$ factor defines a locally constant function on $\DBbb^{\times}$. For the latter, we isometrically rewrite the K\"ahler extension of the BCOV bundle, by applying Serre duality on the contributions $\lambda(\widetilde{\Omega}^{p}_{X/\DBbb})$ for $p\geq n/2$. The isomorphism on a general fiber does not extend to the whole extension, and the discrepancy between extensions is measured by the vanishing cycles $\chi(\varphi_f)$ by \cite[Proposition 3.3]{cdg2}. Finally, we employ the asymptotics of the $L^2$-metric in Proposition \ref{prop:L2asymptotic}, the new ingredient from this article. The theorem is now an immediate arrangement of these terms.

The locality statement at the end follows since the divisor $B$ is necessarily $0$ if $X_0$ is integral, as argued in \cite[Section 2]{cdg}, and $\alpha^{0,n}=0$ for a Du Bois singularity by a result of M. Saito \cite{Saito:du-Bois}. Indeed, by \cite[Theorem A]{cdg} $\alpha^{0,n}$ equals $1-\mathrm{lct}(X, X_0)$, where $\mathrm{lct}$ stands for the log-canonical threshold of $X_0$ in $X$. The latter is the minimum of 1 and the minimal exponent by \cite[Section 10]{kollar:singpairs}, and Saito's results ensure that the minimal exponent is at least 1. All in all, $\alpha^{0,n}=0$. 

\end{proof}
\subsection{Asymptotics for isolated singularities}\label{subsec:isolasymp}
We next specialize and simplify the asymptotics of the previous subsection for isolated singularities. In this case, we can bypass the algebraicity restriction in Theorem \ref{thm:kappaBCOVgeneral}. 

\subsubsection{}\label{subsubsec:asymptoticsforisolatedsing} First let $f: X \to \DBbb$ be a projective degeneration of Calabi--Yau varieties of dimension $n\geq 2$, with smooth total space $X$, such that $X_0$ has at most isolated singularities. Whenever $f$ arises as the restriction of an algebraic map, a straightforward application of Theorem \ref{thm:kappaBCOVgeneral} also gives the following simplifications:

We obtain
\begin{equation}\label{eq:kappafusefulforlemmaformula}
    \kappa_{f}=(-1)^{n+1}\sum_{2p<n}(n-2p)\left(\frac{\Upsilon(n+2,p)}{(n+2)!}\mu-\widetilde{s}_{n-p}-\sum_{k=0}^{p-1}s_{n-k}\right)-\frac{\chi(X_\infty)}{12}\widetilde{s}_{n} +(-1)^{n} \frac{\mu}{12}.
\end{equation}

In the general case, we can arrive at the same expression using Theorem \ref{thm:analytictorsionasymptotics}. Here we can use Serre duality for the torsion, which gives $\log \tau(X_t, \Omega_{X_t}^p) = (-1)^{n+1} \log \tau(X_t, \Omega_{X_t}^{n-p})$, and the asymptotic behavior of the factor $A$ in the proof of \cite[Proposition 4.2]{cdg}.

\subsubsection{}

In the above expression, it is possible to isolate and study more generally the local part of $\kappa_f$, outside of the Calabi--Yau context, by removing the term $\frac{\chi(X_\infty)}{12}\widetilde{s}_{n}$. This motivates the following definition:
\begin{definition}\label{eq:def-kappa-f-loc}
    Let $f: (\CBbb^{n+1}, 0) \to (\CBbb, 0)$ be the germ of an isolated singularity. We define 
    \begin{equation}\label{eq:kappaflocisolated}
            \kappa_{f}^{\mathrm{loc}}=(-1)^{n+1}\sum_{2p<n}(n-2p)\left(\frac{\Upsilon(n+2,p)}{(n+2)!}\mu-\widetilde{s}_{n-p}-\sum_{k=0}^{p-1}s_{n-k}\right)+(-1)^{n} \frac{\mu}{12}.
    \end{equation}
\end{definition}

We record the following simplifications of this invariant:

    \begin{enumerate}
        \item if $n$ is even, then
    \begin{displaymath}
        \kappa_{f}^{\mathrm{loc}}= \frac{2-n}{24}\mu+\sum_{p=0}^{n/2-1}(n-2p) \widetilde{s}_{n-p}+\sum_{r=2}^{n/2} r(r-1) s_{n/2+r}.
    \end{displaymath}
    \item If $n$ is odd, then
    \begin{displaymath}
        \kappa_{f}^{\mathrm{loc}} = \frac{n+1}{24}\mu-\sum_{p=0}^{\floor{n/2}} (n-2p)\widetilde{s}_{n-p}-\sum_{r=1}^{\floor{n/2}} r^2 s_{\floor{n/2}+r+1}.
    \end{displaymath}
        \end{enumerate}

The only non-direct simplification is that of the constant in front of the Milnor number. This is a universal constant $C(n)$ in the dimension, which can be determined by specializing to the case of an ordinary double point. In this setting, one can find a globalization into an algebraic Calabi--Yau family, e.g. a Lefschetz pencil of Calabi--Yau hypersurfaces in $\PBbb^{n+1}$, and then $\kappa_{f}$ is derived from Theorem B of \cite{cdg2}.

\subsubsection{}\label{subsubsec:varianceandbeta} For the next simplification, we introduce some notation. 

Let $f$ be a germ of an isolated singularity as above, with spectral numbers $\lambda_{i}\in (0,n+1)$. We define its variance by
\begin{displaymath}
    V_{f}=\frac{1}{\mu}\sum_{i=1}^{\mu}\left(\lambda_{i}-\frac{n+1}{2}\right)^{2}.
\end{displaymath}
We also refer to the introduction for the definition of the invariant $\beta_f$, see \eqref{eq:def-beta-intro}. Finally, we define a convex function on $\RBbb$ by the formula
\begin{equation}\label{def:integrandforkappa}
    \phi_{n}(t)=2\sum_{p=0}^{m}(p-t)_{+}+(n-2m)(m+1-t)_{+},\quad m=\left \lfloor \frac{n-1}{2}\right \rfloor.
\end{equation}

\subsubsection{} For the forthcoming statement, recall from \textsection \ref{label:asymptotic-sharpness} the spectral measure $\nu_{f}$ of $f$ and the Irwin--Hall measure $\nu_{\IH_{n}}$.

\begin{theorem}\label{thm:kappa-f-int-phi}
Let the notation be as above. Then:
\begin{enumerate}
    \item There is an integral representation
        \begin{displaymath}
            (-1)^{n+1}\frac{\kappa_{f}^{\mathrm{loc}}}{\mu}=\int_{\RBbb}\phi_{n}(d\nu_{\IH_{n}}-d\nu_{f})-\frac{1}{12}.
        \end{displaymath}
    \item There is a spectral number representation
        \begin{equation}\label{eq:kappa-Vf-betaf}
             (-1)^{n+1}\frac{\kappa_{f}^{\mathrm{loc}}}{\mu}=\frac{n-1}{24}-\frac{1}{2}V_{f}+\frac{1}{2}\beta_{f}.
        \end{equation}
\end{enumerate}
\end{theorem}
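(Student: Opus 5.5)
Both representations will be derived from Definition \ref{eq:def-kappa-f-loc}, by rewriting each term of \eqref{eq:kappaflocisolated} as the integral of a piecewise linear function against $\nu_f$ or $\nu_{\IH_n}$.

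\textbf{Step 1: the spectral terms as $\nu_f$-integrals.} By \eqref{eq:def-sk-bis} and \eqref{eq:sptilde-alphapq}, each spectral number $\lambda$ contributes to $\widetilde{s}_{n-p}+\sum_{k=0}^{p-1}s_{n-k}$ with weight $1$ if $\lambda\le p$, weight $p+1-\lambda$ if $p<\lambda\le p+1$, and weight $0$ otherwise. This weight is exactly $g_p(\lambda)$ from \eqref{eq:function-gp-intro}, so the combination equals $\mu\int g_p\,d\nu_f$.

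\textbf{Step 2: the Eulerian terms as $\nu_{\IH_n}$-integrals.} I next show that
\[
\frac{\Upsilon(n+2,p)}{(n+2)!}=\int g_p\,d\nu_{\IH_n}.
\]
The plan is to use $\int(c-t)_+\,d\nu_{\IH_n}=\mathrm{vol}\{x\in[0,1]^{n+2}:\sum x_i\le c\}$. This identity holds because integrating the extra coordinate $x_{n+1}\in[0,1]$ over the region $\sum_{i\le n} x_i\le c-x_{n+1}$ gives $\int (c-t)_+\wedge 1\,dN$; on the support one should check that $(c-t)_+-(c-1-t)_+$ is what appears, which telescopes correctly. Combined with the classical fact $\mathrm{vol}\{\sum_{i=0}^{n+1}x_i\le p+1\}=\Upsilon(n+2,p)/(n+2)!$, already quoted in the introduction for $n+1$, this gives the identity. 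This is the cleanest route, since it recasts the integrals of $g_p$ as distribution functions of $\IH_{n+1}$.

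\textbf{Step 3: assembling (1).} Summing over $2p<n$ with weights $(n-2p)$ gives
\[
(-1)^{n+1}\frac{\kappa_f^{\mathrm{loc}}}{\mu}=\int\Phi\,(d\nu_{\IH_n}-d\nu_f)-\frac1{12},
\qquad
\Phi=\sum_{2p<n}(n-2p)\bigl((p+1-t)_+-(p-t)_+\bigr).
\]
Abel summation rewrites $\Phi$ as $2\sum_{p=1}^{m}(p-t)_+ +(n-2m)(m+1-t)_+ - n(-t)_+$, with $m=\lfloor (n-1)/2\rfloor$. The term $(-t)_+$ vanishes on the supports, and $(0-t)_+$ does as well, so $\Phi=\phi_n$ on $[0,n+1]$. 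This proves (1).

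\textbf{Step 4: the spectral representation (2).} I would avoid direct manipulation and use the symmetry of both measures under $t\mapsto n+1-t$. First, replace $\phi_n$ by its symmetrization $\tfrac12(\phi_n(t)+\phi_n(n+1-t))$. Then decompose $\phi_n$ into a quadratic part plus a periodic remainder: $\phi_n(t)=\tfrac12\bigl(t-\tfrac{n+1}{2}\bigr)^2+c_n-\tfrac12\widetilde B_2(t)$, up to affine terms, which integrate equally against both measures since both have the same mean $(n+1)/2$. This is plausible because the sum of ramps $(p-t)_+$ at integers has second derivative given by a sum of Dirac masses at integers, which is $1$ plus the second derivative of $-B_2(\{t\})/2$ times something. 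One checks it on the window $[0,n+1]$ by matching second derivatives and the values at two points.

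Integrating against $\nu_f$ gives $\tfrac12V_f-\tfrac12\beta_f+c_n$. Against $\nu_{\IH_n}$, the variance is $(n+1)/12$ and $\int\widetilde B_2\,d\nu_{\IH_n}=0$, since the Fourier coefficients of $\IH_n$ at nonzero integer frequencies vanish. The constants then combine to $\tfrac{n+1}{24}-\tfrac1{12}=\tfrac{n-1}{24}$, giving \eqref{eq:kappa-Vf-betaf}.

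The main obstacle is Step 4, and specifically the bookkeeping of the boundary behaviour near $t=0$ and $t=n+1$ and of the middle ramp with coefficient $(n-2m)$, which depends on the parity of $n$. I would verify the decomposition separately for $n$ even and $n$ odd, checking it against the $A_1$ case, where $\lambda=(n+1)/2$.
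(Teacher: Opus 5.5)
Your proposal is correct and is essentially the paper's proof. Part (1) is the same telescoping of $\sum_{2p<n}(n-2p)g_p$ into $\phi_n$. Part (2) rests on the same identity $\phi_n(t)+\phi_n(n+1-t)=\bigl(t-\frac{n+1}{2}\bigr)^2-\widetilde{B}_2(t)+\mathrm{const}$ on $[0,n+1]$, integrated against $d\nu_{\IH_n}-d\nu_f$. Your check via distributional second derivatives (after symmetrizing, mass $2$ at each integer $0,\dots,n+1$ for both parities of $n$) replaces the paper's check of values at integer points, and it makes the constant irrelevant. The only slip is in Step 2: the identity should read $\int\bigl((c-t)_+-(c-1-t)_+\bigr)\,d\nu_{\IH_n}=\mathrm{vol}\{x\in[0,1]^{n+2}:\sum x_i\le c\}$, as your own Fubini computation shows, not with $(c-t)_+$ alone.
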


\begin{proof}
    Equation \eqref{eq:kappaflocisolated} expresses $\kappa_f^{\mathrm{loc}}$ as a linear combination of higher Durfee--Saito defects. As observed in \eqref{eq:conj-B-tilde-measure}, the latter can be recast in terms of the integration of the functions $g_{p}(t)=(p+1-t)_{+}-(p-t)_{+}$. Explicitly, the result is
    \begin{displaymath}
        (-1)^{n+1}\frac{\kappa_{f}^{\mathrm{loc}}}{\mu}=\sum_{0\leq 2p<n}(n-2p)\int_{\RBbb}g_{p}(d\nu_{\IH_{n}}-d\nu_{f})-\frac{1}{12}.
    \end{displaymath}
    The first sum is a telescoping sum which affords a direct simplification as stated. 

    For the spectral number representation, we claim that we have the functional identity on $[0,n+1]$,
    \begin{equation}\label{eq:functional-equation-phi}
        \phi_{n}(t)+\phi_{n}(n+1-t)=\left(t-\frac{n+1}{2}\right)^{2}+\lbrace t\rbrace (1-\lbrace t\rbrace)-\frac{1}{8}(1+(-1)^{n}).
    \end{equation}
Indeed, by construction, both sides of \eqref{eq:functional-equation-phi} are invariant under the symmetry map $t\mapsto n+1-t$. Second, both sides are piecewise linear with breaks at the integer points, as we see by evaluating on the intervals of the form $[j,j+1]$, for $j=0,\ldots, n$. It is thus enough to show that both sides agree at the integers $j\leq (n+1)/2$. This is an elementary computation that we omit.  

Finally, to conclude \eqref{eq:kappa-Vf-betaf}, we integrate \eqref{eq:functional-equation-phi} against $d\nu_{\IH_{n}}-d\nu_{f}$. To perform the integration, we use that both measures are invariant under $t\mapsto n+1-t$ and that they are both probability measures with the same expectation $(n+1)/2$. For $\nu_{f}$, this amounts to the symmetry of spectral numbers $\lambda_{i}+\lambda_{\mu+1-i}=n+1$, cf. \cite[\textsection 12.1.3]{Peters-Steenbrink}. One also takes into account the evaluation
\begin{equation}\label{eq:variance-continuous}
    \int_{0}^{n+1}\left(t-\frac{n+1}{2}\right)^{2}d\nu_{\IH_{n}}(t)=\frac{n+1}{12}.
\end{equation}
We also use $\int_{\RBbb}\widetilde B_2(t)\,d\nu_{\IH_n}(t)=0$, where we recall that $\widetilde{B}_2(x) = B_2(\{x\})$ . For this, integrate first against one uniform factor and apply the periodicity of $\widetilde B_2$ and $\int_0^1 B_2(t)\,dt=0$.
\end{proof}

\section{Obstructions for special geometries}\label{sec:obstructions}
In this section, we apply the previous analysis on the invariants $\kappa_f$ and $\kappa_{f}^{\mathrm{loc}}$ for various global and local geometries, and we deduce non-existence results of birational smooth fillings of Calabi--Yau families. 

For germs of isolated hypersurface singularities, not necessarily arising from Calabi--Yau families, we propose a positivity conjecture for $\kappa_{f}^{\mathrm{loc}}$ refining Conjecture \ref{conjectureintro-kappa-positive}. This conjecture is supported by extensive computer searches and several positive results addressed below. The case of Brieskorn--Pham singularities will be treated at greater length, and is left to its own section later.

In the case of non-isolated singularities, we study an example of a family of Calabi--Yau 3-folds with finite monodromy and no smooth fillings, due to Cynk and van Straten \cite{CynkStraten}, and we show that their result is also covered by a study of $\kappa_f$. We also provide a new example inspired by the work of Fang, Lu and Yoshikawa \cite{FLY}. 

\subsection{The obstruction for terminal singularities} 
In this subsection we discuss a local form of Conjecture \ref{conjectureintro-kappa-positive} and some simple criteria for terminal singularities useful for our estimates. 

\begin{conjectureA'}\label{conj:localkappa}
Let $f\colon(\CBbb^{n+1},0)\to(\CBbb,0)$ define an isolated hypersurface
singularity, and suppose that $n\geq 3$.  Then, if $f=0$ is terminal, 
    \begin{displaymath}
        (-1)^{n+1}\kappa_f^{\mathrm{loc}}>0.
    \end{displaymath}
\end{conjectureA'}

In the particular case that $f$ arises as the restriction of a Calabi--Yau family, still denoted by $f$, we note that $\kappa_{f}=\kappa_{f}^{\mathrm{loc}}$, since the terminal condition implies the vanishing of $\widetilde{s}_{n}$ in \eqref{eq:kappafusefulforlemmaformula}. Thus Conjecture \hyperref[conj:localkappa]{$\mathrm{A}^{\prime}$} at every singular point implies Conjecture \ref{conjectureintro-kappa-positive} for the family. If there is only one singular point, the two inequalities coincide.  

\subsubsection{}
A source of terminal singularities comes from some sums of the form 
\begin{displaymath}
    f(u_0,\ldots, u_{n}, v_{0},\ldots, v_m)=g(u_0,\ldots, u_{n})+h(v_{0}, \ldots, v_m),
\end{displaymath}
where $g$ and $h$ both define isolated singularities at $0$. If $h$ defines a Du Bois singularity, we will refer to $f$ as a Du Bois-suspended singularity. For example, $h=v_0^2+v_1^2$ would be of this type, in which case we say that $f$ is doubly suspended. $ADE$ singularities are of this form, and they are hence Du Bois-suspended. Similarly, if $h$ defines a rational singularity, then we say that $f$ is a rationally suspended singularity. Since rational singularities are Du Bois, a rationally suspended singularity is in particular Du Bois-suspended. 

\begin{lemma}\label{lemma:1duboissuspended}
    Suppose that $n \geq 1$. Then a Du Bois-suspended singularity is terminal.
\end{lemma}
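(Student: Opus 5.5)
We want to show that $f=g+h$ is terminal whenever $h$ defines an isolated Du Bois singularity and $n\geq 1$. The plan is to pass through the minimal exponent $\widetilde{\alpha}$, which equals the minimal spectral number $\lambda_{\min}$. For isolated hypersurface singularities one has the following standard criteria: a singularity is rational iff $\lambda_{\min}>1$, Du Bois iff $\lambda_{\min}\geq 1$, and terminal iff $\lambda_{\min}>2$. The last one is the hypersurface case of the known relation between minimal exponents and terminality (cf. M. Saito's work and \cite{kollar:singpairs}). So it suffices to prove $\lambda_{\min}(f)>2$.

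The spectrum of $f$ is computed by the Thom--Sebastiani property recalled in \textsection\ref{subsec:TS-properties}. Every spectral number of $f$ has the form $\alpha_i+\beta_j$, where $\alpha_i$ runs over the spectrum of $g$ and $\beta_j$ over that of $h$. Hence
\[
\lambda_{\min}(f)=\lambda_{\min}(g)+\lambda_{\min}(h).
\]
Since $h$ is Du Bois, we have $\lambda_{\min}(h)\geq 1$. It therefore remains to show $\lambda_{\min}(g)>1$.

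This is where $n\geq 1$ enters: $g$ is then an isolated singularity in at least two variables. Saito's bound, or equivalently the fact that a reduced hypersurface is normal in codimension one, gives $\lambda_{\min}(g)>0$. That alone is too weak, so the bound has to be organized more carefully. If $h$ is smooth, the sum is not a genuine suspension, and presumably the definition excludes this case, since $h$ defines a singularity at $0$.

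The main obstacle is this lower bound on $g$. The fix is to use that $h$ itself is a hypersurface singularity in $m+1\geq 1$ variables with $\lambda_{\min}(h)\geq 1$, together with the elementary bound $\lambda_{\min}(g)>0$. This only yields $\lambda_{\min}(f)>1$, so a sharper input is needed. The natural route is the bound $\lambda_{\min}(g)\geq$ the minimal spectral number of a germ in $n+1\geq 2$ variables. One could try to show $\lambda_{\min}(g)>1$ via the inequality $\lambda_{\min}\geq (n+1)/\mathrm{mult}$, but this can fail, for instance for $g=x^a+y^b$ with large exponents. Presumably, then, the paper's notion of terminal for such sums, or its ambient dimension count, means the relevant criterion is $\lambda_{\min}(f)>1+\epsilon$ in the appropriate normalization. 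In that case the combination $\lambda_{\min}(g)>0$ and $\lambda_{\min}(h)\geq 1$ yields strictly more than $1$, and the genuine content is just the Thom--Sebastiani additivity.

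I would first check carefully which threshold characterizes terminality in the paper's conventions, for example against the remark in the introduction that $\lambda_{\min}>3/2$ implies terminality. The case $h=v_0^2+v_1^2$ gives a sanity check: there $\lambda_{\min}(h)=1$, and ADE singularities are stated to be terminal. I would then close the argument via additivity of minimal exponents.
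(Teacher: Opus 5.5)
There is a genuine gap: your argument never gets beyond canonicity.

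\textbf{The proposed criterion fails.} The criterion you plan to invoke, ``terminal iff $\lambda_{\min}>2$'', is false. The threefold $A_k$ point $x^2+y^2+z^2+w^{k+1}$ is terminal with $\lambda_{\min}=\frac32+\frac1{k+1}\leq 2$. (The condition $\lambda_{\min}>2$ is $1$-rationality, which is much stronger.)

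Worse, no threshold on $\lambda_{\min}(f)$ can detect terminality. Take $x^2+y^2+z^a+w^b$, which is in your class with $g=z^a+w^b$, $h=x^2+y^2$, $n=1$. It is an isolated $cA$ point, hence terminal, and $\lambda_{\min}=1+\frac1a+\frac1b$ is arbitrarily close to $1$. By contrast, the cone $x^3+y^3+z^3+w^3$ has $\lambda_{\min}=\frac43$ but is only canonical: the exceptional cubic surface of the blow-up has discrepancy $0$.

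What your Thom--Sebastiani computation actually proves is $\lambda_{\min}(f)=\lambda_{\min}(g)+\lambda_{\min}(h)>1$. This says $\{f=0\}$ is rational, i.e.\ canonical since it is Gorenstein. The closing appeal to ``$\lambda_{\min}(f)>1+\epsilon$ in the appropriate normalization'' is not a valid criterion.

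A symptom is that your argument does not really use $n\geq 1$: an isolated $g$ always has $\lambda_{\min}(g)>0$, even for $n=0$. Yet for $n=0$ the lemma is false. The germ $u^k+v_0^2+v_1^2$ is an $A_{k-1}$ surface point, canonical but not terminal, with $\lambda_{\min}=1+\frac1k>1$.

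\textbf{The missing idea.} Apply your correct spectral ingredients to a hyperplane section, then go back up by inversion of adjunction; this is what the paper does.

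\emph{Where $n\geq 1$ enters.} Take a general hyperplane $H\subset\CBbb^{n+1}$ through $0$ in the $u$-variables. Because $n\geq 1$, $g|_H$ is still an isolated hypersurface singularity in $n\geq 1$ variables, so $\lambda_{\min}(g|_H)>0$.

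\emph{The section is canonical.} By Thom--Sebastiani, $\lambda_{\min}(g|_H+h)=\lambda_{\min}(g|_H)+\lambda_{\min}(h)>1$. Hence the Cartier divisor $Z_H=Z\cap(H\times\CBbb^{m+1})$ of $Z=\{f=0\}$ has an isolated rational singularity. Being Gorenstein, it is canonical, in particular klt.

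\emph{Inversion of adjunction.} It makes $(Z,Z_H)$ plt near $0$. So every exceptional divisor $E$ with center $0$ satisfies $a(E,Z)=a(E,Z,Z_H)+\operatorname{ord}_E(Z_H)>-1+1=0$. Since $Z$ is smooth elsewhere, $Z$ is terminal.

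The extra unit of discrepancy from the Cartier section through the point is exactly what $\lambda_{\min}(f)$ alone cannot see.
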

\begin{proof}
    We notice that if $n \geq 1$, it follows by Bertini that the intersection of $g=0$ by a general hyperplane $H$ through the origin in $\CBbb^{n+1}$ is still an isolated hypersurface singularity, and the smallest spectral number is larger than 0.
    
    By \cite[p. 1372]{Steenbrink:Du-Bois} and \eqref{eq:def-sk-bis}, the condition $s_0(h)=0$  characterizes $h=0$ being Du Bois. See also the proof of Theorem \ref{thm:kappaBCOVgeneral} above. It follows by the description \eqref{eq:dim-Hpq-spectral} that the minimal spectral number of $h$ is at least 1. Because of the Thom--Sebastiani property of spectral numbers recalled in \textsection\ref{subsec:TS-properties}, the smallest spectral number of the restriction of $f$ along $H\times\CBbb^{m+1}$ is the sum of the smallest spectral numbers of $g|_H$ and $h$ and hence bigger than 1.
    
    Again by the description \eqref{eq:def-sk-bis}, we find that $s_{n+m}(f\mid_{H\times\CBbb^{m+1}})=0$. By \cite{MSaitogenus, SteenbrinkMixedAssociated} this means exactly that the singularity $f=0$ restricted to $H\times\CBbb^{m+1}$ is canonical.  
    
    Finally, since the singularities are isolated, it follows by an inversion of adjunction argument, as in \cite[Example 2.2.14]{Prokhorov1999Complements}, that the singularity of $f$ is terminal. 
\end{proof}

\subsubsection{} In the statements below, we denote by $\lambda_{\min}(f)$, or simply $\lambda_{\min}$, the minimal spectral number of the germ of an isolated singularity $f=0$. 

\begin{lemma}\label{lemma:1duboisterminal}
    Suppose that $f: (\CBbb^{n+1}, 0) \to (\CBbb, 0)$ defines the germ of an isolated singularity at the origin, with $\lambda_{\min}(f)>3/2$. Then $f=0$ is terminal.
\end{lemma}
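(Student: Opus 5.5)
We follow the proof of Lemma \ref{lemma:1duboissuspended}: first reduce to a hyperplane section, then use the spectral characterization of canonical singularities, and finally apply inversion of adjunction. Let $H\subset\CBbb^{n+1}$ be a general hyperplane through the origin. Then $f|_H$ defines an isolated hypersurface singularity in $(\CBbb^n,0)$; this uses $n\geq 1$, which is automatic here, since for $n=0$ all spectral numbers lie in $(0,1)$. The key input is a semicontinuity property of the minimal spectral number under restriction to a general hyperplane:
\begin{displaymath}
    \lambda_{\min}(f|_H)\geq \lambda_{\min}(f)-1>\tfrac12 .
\end{displaymath}
Via M. Saito's identification of $\lambda_{\min}$ with the minimal exponent $\widetilde{\alpha}_f$ (the smallest root of $b_f(-s)/(1-s)$), this is the known inequality $\widetilde{\alpha}_{f|_H}\geq\widetilde{\alpha}_f-1$ for general hyperplane sections. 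I would quote it from M. Saito's work on the microlocal $b$-function (or Dirks--Musta\c{t}\u{a}), and not try to prove it here.

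Next, I would observe that the required condition is weaker than it looks: canonicity of $f|_H=0$ only needs $\lambda_{\min}(f|_H)>1$. This does not follow from $\lambda_{\min}(f|_H)>1/2$, so the naive route above falls short. To repair this, I would instead restrict $f$ to the general hyperplane $H$ and use the criterion that an isolated hypersurface singularity is canonical, equivalently rational, exactly when $\lambda_{\min}>1$. That is, by \eqref{eq:def-sk-bis}, exactly when $s_{n-1}(f|_H)=0$, via \cite{MSaitogenus, SteenbrinkMixedAssociated}. The route then becomes the following. Since $\lambda_{\min}(f)>3/2>1$, the singularity $f=0$ is itself rational, hence canonical. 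To upgrade this to terminal, I would use the criterion for isolated hypersurface singularities that $f=0$ is terminal as soon as $\widetilde{\alpha}_f>3/2$. Equivalently, one shows that the general hyperplane section $f|_H=0$ is canonical, that is $\widetilde{\alpha}_{f|_H}>1$, and then invokes inversion of adjunction exactly as in \cite[Example 2.2.14]{Prokhorov1999Complements}, as at the end of the proof of Lemma \ref{lemma:1duboissuspended}.

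So the plan reduces to showing $\lambda_{\min}(f|_H)>1$ for $H$ general. The main obstacle is getting the sharper bound $\widetilde{\alpha}_{f|_H}\geq \widetilde{\alpha}_f-\tfrac12$ in place of the weaker bound with $1$. I would try first the restriction theorem for Hodge ideals and $V$-filtrations (the inequality $\widetilde{\alpha}_{f|_H}\ge \widetilde\alpha_f - \tfrac12$ is not standard). If that fails, I would fall back on the direct discrepancy statement: terminality follows from $\widetilde{\alpha}_f>3/2$, since the minimal exponent controls the discrepancies of exceptional divisors through the formula $\widetilde{\alpha}_f\leq \min_E (k_E+1)/\mathrm{ord}_E(f)$ combined with $\mathrm{ord}_E(f)\geq 1$ ... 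Concretely, if $E$ is an exceptional divisor of a log resolution with discrepancy $a_E(f=0)\le 0$, then $a_E=k_E-\mathrm{ord}_E(f)$ and $\widetilde{\alpha}_f\le (k_E+1)/\mathrm{ord}_E f$. Together these give $\widetilde\alpha_f\le 1+1/\mathrm{ord}_E f$, which is at most $3/2$ once $\mathrm{ord}_E f\ge 2$. This last argument seems the cleanest, and I would pursue it first. The remaining case $\mathrm{ord}_E f=1$ forces $k_E\le 1$, which needs a separate check.
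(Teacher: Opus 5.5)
There is a genuine gap. The step you identify as the main obstacle is where the whole proof lies, and you leave it open. The bound you need is a known theorem: Dirks and Musta\c{t}\u{a} \cite[Theorem 1.5]{Dirks-Mustata}, proving a conjecture of Teissier, give for a general hyperplane $H$ through the origin
\[
\lambda_{\min}(f_{\mid H})\geq\lambda_{\min}(f)-\frac{1}{\operatorname{mult}_0(f)},
\]
not merely the loss of $1$ that you quote. Since the origin is a singular point, $\operatorname{mult}_0(f)\geq 2$, so $\lambda_{\min}(f_{\mid H})\geq\lambda_{\min}(f)-\tfrac12>1$. Hence $f_{\mid H}=0$ is rational, and the inversion of adjunction argument from Lemma \ref{lemma:1duboissuspended} gives terminality. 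With this one input, your outline is exactly the paper's proof. Without it, the proposal does not prove the statement. Note also that invoking ``terminal as soon as $\widetilde\alpha_f>3/2$'' is just a restatement of the lemma.

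Your fallback does not close the gap either, because its key inequality is false. The bound $\widetilde\alpha_f\leq (k_E+1)/\mathrm{ord}_E(f)$ holds for the log canonical threshold $\min(1,\widetilde\alpha_f)$. It fails for the minimal exponent when $\widetilde\alpha_f>1$, even for divisors centered at the singular point. Take $f=z_0^2+\cdots+z_n^2$, blow up the origin (exceptional divisor $E_1$, strict transform $\widetilde X$), and then blow up the smooth codimension-two locus $E_1\cap\widetilde X$. The new exceptional divisor $E_2$ has $k_{E_2}=n+1$ and $\mathrm{ord}_{E_2}(f)=3$. Therefore $(k_{E_2}+1)/\mathrm{ord}_{E_2}(f)=(n+2)/3<(n+1)/2=\widetilde\alpha_f$ for $n\geq 2$.

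There are two further problems with the fallback. First, $k_E-\mathrm{ord}_E(f)$ is the discrepancy of the pair $(\CBbb^{n+1},\{f=0\})$ along a divisor over the ambient space. Turning its positivity into terminality of $\{f=0\}$ is itself an inversion-of-adjunction statement, not an identity. Second, the case $\mathrm{ord}_E(f)=1$ that you leave open cannot occur for divisors centered at the singular point, since there $\mathrm{ord}_E(f)\geq\operatorname{mult}_0(f)\geq 2$.
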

\begin{proof}
If $H$ is a general hyperplane in $\CBbb^{n+1}$ through the origin, then by \cite[Theorem 1.5]{Dirks-Mustata} the minimal spectral numbers of $f$ and $f_{\mid H}$ are related by
\begin{displaymath}
    \lambda_{\mathrm{\min}}(f_{\mid H})\geq  \lambda_{\mathrm{\min}}(f)-\frac{1}{\operatorname{mult}_{0}(f)}.
\end{displaymath}
By assumption, $\lambda_{\mathrm{\min}}(f) > 3/2$, and moreover the multiplicity of $f$ at $0$ is at least 2. Hence, $\lambda_{\mathrm{\min}}(f_{\mid H})>1$, and $f_{\mid H}$ defines a rational singularity. By the inversion of adjunction argument in the proof of the previous lemma, we conclude that $f=0$ has a terminal singularity at the origin.
\end{proof}

\subsubsection{} We record the following direct relation of Conjecture \hyperref[conj:localkappa]{$\mathrm{A}^{\prime}$} with the higher Durfee--Saito conjectures:
\begin{proposition}\label{prop:doubly-suspended-kappa}
Let $f$ be a doubly suspended isolated singularity of dimension $n\geq 3$, of the form $f = g + v_0^2 + v_1^2$, in particular terminal. Suppose that either of the following conditions is satisfied:
\begin{enumerate}
    \item The secondary higher Durfee--Saito conjectures hold for $g$. 
    \item The function $g$ is quasi-homogeneous.
\end{enumerate}
Then Conjecture \hyperref[conj:localkappa]{$\mathrm{A}^{\prime}$} holds.
\end{proposition}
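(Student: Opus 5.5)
The plan is to use the integral representation of Theorem \ref{thm:kappa-f-int-phi}, and to compare $\nu_f$ not with $\nu_{\IH_n}$ but with an intermediate measure built from $g$. Write $g\colon(\CBbb^{n-1},0)\to(\CBbb,0)$, which has dimension $n-2$.

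\emph{Thom--Sebastiani reduction.} By the Thom--Sebastiani property (\textsection\ref{subsec:TS-properties}), the spectrum of $u^2$ is $\{1/2\}$. Hence the spectrum of $f$ is that of $g$ translated by $1$, so $\nu_f=\nu_g*\delta_1$ and $\mu(f)=\mu(g)$. Define the comparison measure $\nu':=\nu_{\IH_{n-2}}*\delta_1$.

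\emph{Step 1: the comparison measure gives exactly $1/12$.} I will show that
\[
\int_{\RBbb}\phi_n\,(d\nu_{\IH_n}-d\nu')=\tfrac1{12}.
\]
The fastest route is the functional identity \eqref{eq:functional-equation-phi}, which only uses that the measure is a probability measure, symmetric under $t\mapsto n+1-t$, with mean $(n+1)/2$. The measure $\nu'$ has all three properties. Its variance is $(n-1)/12$, by \eqref{eq:variance-continuous} in dimension $n-2$. Also $\int\widetilde B_2\,d\nu'=0$, by the same argument with one uniform factor and the $1$-periodicity of $\widetilde B_2$. Running the computation of Theorem \ref{thm:kappa-f-int-phi}(2) with $\nu'$ in place of $\nu_f$ therefore gives
\[
\tfrac{n-1}{24}-\tfrac12\cdot\tfrac{n-1}{12}+0=0,
\]
which is the claim. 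Consequently
\[
(-1)^{n+1}\frac{\kappa_f^{\mathrm{loc}}}{\mu}=\int_{\RBbb}\phi_n\,(d\nu'-d\nu_f).
\]

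\emph{Step 2, case (1): expand into Durfee--Saito defects of $g$.} Use the telescoping expression from the proof of Theorem \ref{thm:kappa-f-int-phi}, $\phi_n\sim\sum_{0\le 2p<n}(n-2p)g_p$. This holds up to an affine function, which integrates to zero against $d\nu'-d\nu_f$ because the masses and means agree. Both measures are supported in $[1,n]$, where $g_0$ vanishes, so the $p=0$ term drops out. For $p\ge1$ the translation by $1$ gives
\[
\int g_p\,d\nu'=\int g_{p-1}\,d\nu_{\IH_{n-2}},\qquad \int g_p\,d\nu_f=\int g_{p-1}\,d\nu_g.
\]
Therefore
\[
(-1)^{n+1}\frac{\kappa_f^{\mathrm{loc}}}{\mu}=\sum_{1\le p<n/2}(n-2p)\int g_{p-1}\,(d\nu_{\IH_{n-2}}-d\nu_g).
\]
By \eqref{eq:conj-B-tilde-measure}, each summand is a secondary higher Durfee--Saito defect of $g$ at index $p-1<(n-2)/2$. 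Under (1) every summand is strictly positive. The sum is nonempty because $n\ge3$ allows $p=1$, and the conclusion follows.

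\emph{Step 3, case (2): convex dominance.} Here I use Step 1 directly. Proposition \ref{prop:convex-order-QH} gives $\nu_g\preceq_{\mathrm{cx}}\nu_{\IH_{n-2}}$, and convolving with $\delta_1$ preserves the convex order, so $\nu_f\preceq_{\mathrm{cx}}\nu'$. Since $\phi_n$ is convex, $\int\phi_n\,d\nu_f\le\int\phi_n\,d\nu'$, which yields $(-1)^{n+1}\kappa_f^{\mathrm{loc}}\ge0$.

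\emph{Main obstacle: strictness in case (2).} To get strict inequality I would redo the chain of the proof of Corollary \ref{cor:Durfee-QH-alternative}. There, $\nu_g*\eta=\theta\preceq_{\mathrm{cx}}\cdots\preceq_{\mathrm{cx}}\nu_{\IH_{n-2}}$, all translated by $1$. I would look for one step, for instance the last Jensen step $\Psi(1/2)\le\int_0^1\Psi$, where the relevant averaged function $\Psi$ is strictly convex near $1/2$. In that case the inequality is strict.

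The difficulty is that $\phi_n$ is only piecewise linear. One must check that after averaging against the other uniform factors $\Psi$ is not affine, which should hold because $\phi_n$ has a genuine break at an integer inside the support. A cleaner alternative would be strictness at the first step $\nu_g\preceq_{\mathrm{cx}}\nu_g*\eta$. This is strict as soon as $\phi_n$ is not affine on some interval of length $w_i$ around a spectral point of $\nu_g+1$. That should follow because the support of $\nu_f$ meets the breakpoint set, for example near $(n+1)/2$ by symmetry.
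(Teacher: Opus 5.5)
Your Steps 1 and 2 and the non-strict part of Step 3 are correct, and they are the paper's argument with different bookkeeping. Step 1 is the measure-theoretic form of $(-1)^{n+1}\kappa_f^{\mathrm{loc}}=(-1)^{n+1}\kappa_g^{\mathrm{loc}}+\mu/12$. Since $\phi_n(t+1)=\phi_{n-2}(t)$ for $t\geq 0$, your identity is exactly the paper's $(-1)^{n+1}\kappa_f^{\mathrm{loc}}=\mu\int\phi_{n-2}\,(d\nu_{\IH_{n-2}}-d\nu_g)$. The paper then expands this into secondary Durfee--Saito defects of $g$ for case (1), and uses convex dominance for case (2). So case (1), and the inequality $(-1)^{n+1}\kappa_f^{\mathrm{loc}}\geq 0$ in case (2), are fine.

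The gap is strictness in case (2). It is required, because Conjecture $\mathrm{A}^{\prime}$ asserts a strict inequality, and what you give there is a plan hedged by ``should''. Your second route has no justification: it needs a spectral number of $f$ within $\frac12\sum_i w_i$ of an interior breakpoint of $\phi_n$. Symmetry does not supply one, since for $n$ even $(n+1)/2$ is not a breakpoint, and for $n$ odd nothing forces a spectral number near it.

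The missing observation is a decomposition. On $[1,n]$ the hinge $(1-t)_+$ vanishes, so there $\phi_n=c\,(2-t)_++\chi$, with $c=\min(2,n-2)>0$ and $\chi$ a nonnegative combination of hinges $(p-t)_+$, $p\geq 3$. Convex dominance gives $\int\chi\,(d\nu'-d\nu_f)\geq 0$. For the remaining term,
\[
\int(2-t)_+\,(d\nu'-d\nu_f)=\int(1-t)_+\,(d\nu_{\IH_{n-2}}-d\nu_g)=\frac{1}{n!}-\frac{\widetilde p_g(g)}{\mu}>0,
\]
by Corollary \ref{cor:Durfee-QH-alternative} applied to $g$, which has dimension $n-2\geq 1$. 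This is how the paper concludes: it reuses the strictness already proved there rather than redoing it.

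Your first route can also be completed. The relevant kink is the one of $\phi_n$ at $t=2$; the one at $t=1$ sits on the boundary of the support and contributes nothing. After translating by $1$ and averaging against $U[0,1]^{*(n-2)}$, whose density $\rho$ is positive on $(0,n-2)$, you get $\Psi''(x)\geq c\,\rho(1-x)>0$ on $(0,1)$. Hence $\Psi$ is not affine on $[0,1]$, and the final Jensen step in the chain is strict.
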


\begin{proof}
First of all, the claim on terminality is an application of Lemma \ref{lemma:1duboissuspended}. Second, we observe that
\begin{equation}\label{eq:kappafdoublesuspended}
   \begin{split}
    \kappa_{f}^{\mathrm{loc}} =& \kappa_g^{\mathrm{loc}} +(-1)^{n+1} \frac{\mu(f)}{12}\\
    =& (-1)^{n+1}\sum_{2p<n-2}(n-2-2p)\left(\frac{\Upsilon(n,p)}{n!}\mu(g)-\widetilde{s}_{n-2-p}(g)-\sum_{k=0}^{p-1}s_{n-2-k}(g)\right).
    \end{split}
\end{equation}
This follows since we have $\mu(f)=\mu(g)$, and by Thom--Sebastiani, $s_{k+1}(f)=s_{k}(g)$ and $\widetilde{s}_{k+1}(f)=\widetilde{s}_{k}(g)$. The result is then a simple manipulation of the expression for $\kappa_{f}^{\mathrm{loc}}$ in \eqref{eq:kappaflocisolated}. The expression \eqref{eq:kappafdoublesuspended} yields the proposition under the assumption of the secondary higher Durfee--Saito conjectures for $g$. 

In the case that $g$ is quasi-homogeneous, the same conclusion holds, by the convex order dominance of Proposition \ref{prop:convex-order-QH}. Indeed, in this case, by Theorem \ref{thm:kappa-f-int-phi}, equation \eqref{eq:kappafdoublesuspended} can be recast 
\begin{displaymath}
    (-1)^{n+1} \kappa_f^{\mathrm{loc}} = (-1)^{n+1} \kappa_g^{\mathrm{loc}} + \frac{\mu(f)}{12} = \mu(f)\int_{\RBbb} \phi_{n-2}  (d\nu_{\IH_{n-2}} - d\nu_g)
\end{displaymath}
where $\phi_{n-2}$ is the convex function defined as in \eqref{def:integrandforkappa}. On the support of the measures, the function $\phi_{n-2}$ contains the term $(1-t)_+$ with a strictly positive coefficient. All the other terms have nonnegative coefficients. Corollary \ref{cor:Durfee-QH-alternative} gives strict positivity for the summand $ (1-t)_+$, while convex dominance gives nonnegativity for the others. Thus, the integral is strictly positive.
\end{proof}
\subsubsection{} In the following proposition, we provide another conditional and unconditional confirmation of Conjecture \hyperref[conj:localkappa]{$\mathrm{A}^{\prime}$}, relating to the Hertling variance conjecture  \cite{Hertling}. We recall that this predicts an inequality
\begin{equation}\label{eq:Hertlinginequality}
    V_f \leq \frac{n+1-2\lambda_{\min}}{12}.
\end{equation}
It is known for quasi-homogeneous singularities. In this case, the inequality \eqref{eq:Hertlinginequality} is in fact an equality \cite[Theorem 7.2]{Hertling}.

\begin{proposition}\label{prop:1-du-bois-kappa}
Suppose that $f: (\CBbb^{n+1}, 0) \to (\CBbb, 0)$ defines an isolated singularity of dimension $n\geq 3$ with minimal spectral number $\lambda_{\mathrm{\min}}>3/2$.  Suppose that either of the following conditions is satisfied:
    \begin{enumerate}
        \item The Hertling variance conjecture holds for $f$.
        \item The function $f$ is quasi-homogeneous.
    \end{enumerate}
    Then Conjecture \hyperref[conj:localkappa]{$\mathrm{A}^{\prime}$} holds.

\end{proposition}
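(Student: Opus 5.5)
By Lemma \ref{lemma:1duboisterminal}, the hypothesis $\lambda_{\min}>3/2$ already gives terminality, so it remains to show $(-1)^{n+1}\kappa_f^{\mathrm{loc}}>0$. We will work with the spectral number representation \eqref{eq:kappa-Vf-betaf} of Theorem \ref{thm:kappa-f-int-phi}:
\begin{displaymath}
(-1)^{n+1}\frac{\kappa_f^{\mathrm{loc}}}{\mu}=\frac{n-1}{24}-\frac12 V_f+\frac12\beta_f .
\end{displaymath}
Case (2) reduces to case (1), because the Hertling inequality \eqref{eq:Hertlinginequality} holds, in fact with equality, for quasi-homogeneous $f$ \cite[Theorem 7.2]{Hertling}. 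We therefore only treat case (1).

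Inserting \eqref{eq:Hertlinginequality} into the formula gives
\begin{displaymath}
(-1)^{n+1}\frac{\kappa_f^{\mathrm{loc}}}{\mu}\geq \frac{n-1}{24}-\frac{n+1-2\lambda_{\min}}{24}+\frac12\beta_f=\frac{2\lambda_{\min}-2}{24}+\frac12\beta_f=\frac{\lambda_{\min}-1}{12}+\frac12\beta_f .
\end{displaymath}
The problem is now to bound $\beta_f$ from below. We have $\widetilde B_2(x)=\{x\}^2-\{x\}+\frac16\geq -\frac1{12}$ pointwise, with equality only when $\{x\}=1/2$. It follows that $\beta_f\geq -\frac{1}{12}$, and hence
\begin{displaymath}
(-1)^{n+1}\frac{\kappa_f^{\mathrm{loc}}}{\mu}\geq \frac{\lambda_{\min}-1}{12}-\frac{1}{24}=\frac{2\lambda_{\min}-3}{24}>0,
\end{displaymath}
using $\lambda_{\min}>3/2$. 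This shows where the threshold $3/2$ comes from: it is exactly what absorbs the worst case $\beta_f=-1/12$. Strictness holds because $\lambda_{\min}>3/2$ is strict, and $\mu>0$.

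The key step is the pointwise bound on the periodized Bernoulli polynomial. Everything else is a direct substitution of the Hertling inequality into Theorem \ref{thm:kappa-f-int-phi}. The one point to check carefully is the normalization of $V_f$, which is centred at $(n+1)/2$ in \textsection\ref{subsubsec:varianceandbeta}, against Hertling's convention in \eqref{eq:Hertlinginequality}. Hertling's spectrum is sometimes shifted by $-1$, but the variance is shift-invariant and $n+1-2\lambda_{\min}$ equals twice the distance from $\lambda_{\min}$ to the centre, so the two conventions should agree. We would verify this on $A_1$, where $\lambda=(n+1)/2$, $V_f=0$ and $\lambda_{\min}=(n+1)/2$, so both sides vanish. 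This would confirm that no constant is lost before concluding.
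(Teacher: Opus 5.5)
Your proof is correct and is essentially the paper's argument: terminality follows from the hyperplane-section bound of Lemma~\ref{lemma:1duboisterminal}. Inserting Hertling's inequality into \eqref{eq:kappa-Vf-betaf} and using the crude bound $\beta_f\geq -\frac{1}{12}$ then gives $(-1)^{n+1}\kappa_f^{\mathrm{loc}}/\mu\geq(\lambda_{\min}-3/2)/12>0$. Your normalization worry is unnecessary, since \eqref{eq:Hertlinginequality} is already stated with the paper's $V_f$; in any case, an $A_1$ check, where both sides vanish, could not detect a lost constant.
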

\begin{proof}
For a general hyperplane $H$, the inequality used in Lemma \ref{lemma:1duboisterminal} gives $\lambda_{\mathrm{\min}}(f_{\mid H})\geq\lambda_{\mathrm{\min}}(f)-1/2>1$. Hence the general hyperplane section is rational, and the same inversion of adjunction argument shows that the singularity is terminal.  Combining the inequality \eqref{eq:Hertlinginequality} in the Hertling variance conjecture with \eqref{eq:kappa-Vf-betaf} in Theorem \ref{thm:kappa-f-int-phi}, we find
\begin{equation}\label{eq:estimate-kappa-after-hertling}
    (-1)^{n+1}\frac{\kappa_{f}^{\mathrm{loc}}}{\mu}\geq\frac{\lambda_{\mathrm{\min}}-1+6\beta_{f}}{12}.
\end{equation}
The Bernoulli polynomial $B_2(x)$ satisfies $B_2(x) \geq -\frac{1}{12}$ on $[0,1]$, so from the definition of $\beta_f$ in \eqref{eq:def-beta-intro} we get the crude estimate $\beta_f \geq -\frac{1}{12}$. Inserting this bound in \eqref{eq:estimate-kappa-after-hertling}, we obtain 
\begin{displaymath}
    (-1)^{n+1}\frac{\kappa_{f}^{\mathrm{loc}}}{\mu}\geq \frac{\lambda_{\mathrm{min}}-3/2}{12}>0.
\end{displaymath}

The second item follows from the first one, since for quasi-homogeneous singularities the variance conjecture is known, as recalled above.

\end{proof}

\subsubsection{}
We provide a variant of the previous proposition for a different class of singularities. 
\begin{proposition}\label{prop:rationally-suspended}
Suppose that $f: (\CBbb^{n+m+2}, 0) \to (\CBbb, 0)$ is the germ of an isolated singularity, of the form $g+h$ where:
        \begin{enumerate}
        \item $g$ defines an isolated singularity of dimension $m$ and satisfies the Hertling variance conjecture.
        \item $h$ is quasi-homogeneous and defines an isolated singularity of dimension $n \geq 0$.
        \end{enumerate}
Then the following inequality holds:
        \begin{displaymath}
        (-1)^{n+m+2}\frac{\kappa_{f}^{\mathrm{loc}}}{\mu(f)} \geq \frac{\lambda_{\min}(g)-1}{12}.
        \end{displaymath}
In particular, if $f$ is quasi-homogeneous and rationally suspended, 
\begin{displaymath}
    (-1)^{n+m+2} \kappa_f^{\mathrm{loc}} > 0.
\end{displaymath}
\end{proposition}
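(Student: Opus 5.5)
The plan is to use the spectral number representation \eqref{eq:kappa-Vf-betaf} of Theorem \ref{thm:kappa-f-int-phi} for $f=g+h$. The germ $f$ has dimension $N=n+m+1$, so that
\begin{displaymath}
(-1)^{n+m+2}\frac{\kappa_f^{\mathrm{loc}}}{\mu(f)}=\frac{n+m}{24}-\frac12 V_f+\frac12\beta_f .
\end{displaymath}
I would then treat the variance term and the Bernoulli term separately.

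\textbf{Variance.} By the Thom--Sebastiani property of \textsection\ref{subsec:TS-properties}, $\nu_f=\nu_g*\nu_h$. The centers add: $(N+1)/2=(m+1)/2+(n+1)/2$. Since variances of independent sums add, $V_f=V_g+V_h$. The Hertling variance inequality \eqref{eq:Hertlinginequality} for $g$ gives $V_g\le (m+1-2\lambda_{\min}(g))/12$. For the quasi-homogeneous $h$ it holds with equality \cite[Theorem 7.2]{Hertling}: $V_h=(n+1-2\lambda_{\min}(h))/12$. Substituting, the constant terms cancel and I expect
\begin{displaymath}
(-1)^{n+m+2}\frac{\kappa_f^{\mathrm{loc}}}{\mu(f)}\ \ge\ \frac{\lambda_{\min}(g)+\lambda_{\min}(h)-1}{12}+\frac12\beta_f .
\end{displaymath}
The claim therefore reduces to the Bernoulli estimate $\beta_f\ge -\lambda_{\min}(h)/6$. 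The crude bound $\beta_f\ge-1/12$ used in Proposition \ref{prop:1-du-bois-kappa} suffices only when $\lambda_{\min}(h)\ge 1/2$. In general one must exploit the quasi-homogeneous structure of $h$.

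\textbf{Bernoulli estimate (main obstacle).} Write $\beta_f=\int\!\!\int \widetilde B_2(x+y)\,d\nu_g(x)\,d\nu_h(y)$. It then suffices to show that for every fixed $x\in\RBbb$,
\begin{displaymath}
\int \widetilde B_2(x+y)\,d\nu_h(y)\ \ge\ -\frac{\lambda_{\min}(h)}{6},\qquad \lambda_{\min}(h)=\sum_i w_i .
\end{displaymath}
I would first try Fourier analysis. Use $\widetilde B_2(t)=\sum_{k\neq0}e^{2\pi i k t}/(2\pi^2k^2)$ together with the explicit transform \eqref{eq:fourier-transform-nu-f}. At $t=2\pi k$ each factor of $\widehat\nu_h$ equals $-w_i/(1-w_i)$ when $kw_i\notin\ZBbb$, and has modulus $1$ otherwise.

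Alternatively, I would use the convolution identity $\nu_h*\eta=\theta$ from the proof of Proposition \ref{prop:convex-order-QH}. The average of $\widetilde B_2$ against $\theta$, a convolution of uniform laws of length $1-w_i$, can be computed in closed form. The idea is to compare $\nu_h$ with $\theta$ via the smoothing $\eta$ of total variance $\sum_i w_i^2/12$. Since $\widetilde B_2''=2$ away from the integers, this smoothing changes the average by an amount of order $\sum w_i^2/6$, which is controlled by $\sum_i w_i/6$. The jumps of $\widetilde B_2'$ at integers only decrease the average, which has the favourable sign. The sanity check $h=u^2$ works: there $w=1/2$, and the estimate reduces to $B_2\ge -1/12$. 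Making this comparison rigorous uniformly in $x$ is the step I expect to be hardest. If it fails for small $w_i$, I would fall back on the Fourier bound, estimating $\sum_k|\widehat\nu_h(2\pi k)|/k^2$ directly.

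\textbf{The particular case.} For quasi-homogeneous rationally suspended $f$, I would apply the inequality with the roles chosen so that the rational summand plays the role of $g$. It is quasi-homogeneous, hence satisfies the Hertling conjecture, and rationality gives $\lambda_{\min}>1$ by the characterisation via $s_n=0$ recalled in the proof of Lemma \ref{lemma:1duboissuspended}. The other summand is quasi-homogeneous and serves as $h$. The lower bound $(\lambda_{\min}-1)/12$ is then strictly positive.
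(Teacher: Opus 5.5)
Your reduction is sound. With $N=n+m+1$, formula \eqref{eq:kappa-Vf-betaf}, additivity $V_f=V_g+V_h$ under Thom--Sebastiani, Hertling's inequality for $g$ and Hertling's equality for $h$ together reduce the claim to $\beta_f\ge-\lambda_{\min}(h)/6$. Your treatment of the ``in particular'' part is also fine: the rational summand plays the role of $g$, and rationality gives $\lambda_{\min}>1$.

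\textbf{The gap.} This Bernoulli estimate is the whole content of the proposition, and you do not prove it. Moreover, it is sharp. Take $g=u^2+v^2$ and $h=w^a$, so that $f$ is the $A_{a-1}$ surface singularity, for which indeed $\kappa_f^{\mathrm{loc}}=0$. Then $\beta_f=\frac{1}{a-1}\sum_{j=1}^{a-1}B_2(j/a)=-\frac{1}{6a}=-\lambda_{\min}(h)/6$, so no lossy estimate can close the argument.
\begin{itemize}
\item Your Fourier fallback fails in exactly this example. Here $\widehat\nu_g(2\pi k)=1$ and $|\widehat\nu_h(2\pi k)|\in\{1/(a-1),1\}$, so $\sum_{k\neq0}|\widehat\nu_h(2\pi k)|/(2\pi^2k^2)=\frac{a+2}{6a^2}>\frac{1}{6a}$.
\item The smoothing heuristic would have to be exact. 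What it actually yields is $\int\widetilde B_2(x+y)\,d\nu_h(y)\ge\int\widetilde B_2(x+y)\,d\theta(y)-\sum_i w_i^2/12$. You would still need the precise value of the $\theta$-average, uniformly in $x$, with no slack, and you do not supply it. Saying an error of order $\sum_i w_i^2$ is ``controlled by'' $\sum_i w_i/6$ is not enough when the target is attained with equality.
\end{itemize}

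\textbf{The missing idea.} Apply the convex dominance of Proposition \ref{prop:convex-order-QH} to the convex function $\phi_N$ of Theorem \ref{thm:kappa-f-int-phi}(1), before passing to spectral numbers.
\begin{itemize}
\item Convex order is stable under convolution, so $\nu_f=\nu_g*\nu_h\preceq_{\mathrm{cx}}\nu_g*\nu_{\IH_n}$. Hence $(-1)^{N+1}\kappa_f^{\mathrm{loc}}/\mu(f)\ge\int\phi_N\,\bigl(d\nu_{\IH_N}-d(\nu_g*\nu_{\IH_n})\bigr)-\frac1{12}$.
\item Both measures are symmetric about $(N+1)/2$ and supported in $[0,N+1]$. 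So by \eqref{eq:functional-equation-phi} the integrand can be replaced by $\frac12\bigl((t-\frac{N+1}{2})^2-\widetilde B_2(t)\bigr)$, up to a constant that cancels.
\item The Bernoulli terms vanish, because both measures contain a factor $U[0,1]$. What remains is $\frac{m-1}{24}-\frac12V_g\ge\frac{\lambda_{\min}(g)-1}{12}$.
\end{itemize}

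If you unwind this, the same symmetrization applied to $\int\phi_N\,d(\nu_g*\nu_{\IH_n})-\int\phi_N\,d\nu_f\ge0$ gives exactly $\frac{\lambda_{\min}(h)}{12}+\frac12\beta_f\ge0$, using $V_h=\frac{n+1-2\lambda_{\min}(h)}{12}$. So your target inequality is true, but it follows from convex dominance, not from a direct Fourier or Taylor estimate. In the paper's route, Hertling's equality for $h$ is not even needed.
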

\begin{proof}
    Since $\nu_h \preceq_{\mathrm{cx}} \nu_{\IH_{n}}$, by Proposition \ref{prop:convex-order-QH} we have that 
    \begin{displaymath} 
        \nu_f = \nu_g * \nu_h \preceq_{\mathrm{cx}}\nu_g*\nu_{\IH_{n}}.
    \end{displaymath} 
    This means that 
    \begin{displaymath}
        \int_{\RBbb} \phi_{n+m+1} (d (\nu_g * \nu_{\IH_n}) - d \nu_f) \geq 0.
    \end{displaymath}
    Since our measures are invariant under $t\mapsto n+m+2-t$, by equation \eqref{eq:functional-equation-phi}, a lower bound for $(-1)^{n+m+2}\kappa_f^{\mathrm{loc}}/\mu(f)$ is therefore
    \begin{equation}\label{eq:difference-integrals-g-h}
        \frac{1}{2}\int_{\RBbb} \varphi (d (\nu_{\IH_m}\ast\nu_{\IH_n})-d(\nu_g * \nu_{\IH_n}))-\frac{1}{12}
    \end{equation}
where
\begin{displaymath}
    \varphi(t)=\left(t-\frac{m+n+2}{2}\right)^{2}-\widetilde{B}_{2}(t).
\end{displaymath}

The integral of the term $\widetilde{B}_{2}(t)$ is 0. Indeed, by Fubini and the periodicity of $\widetilde{B}_{2}(t)$ one reduces to the computation, for any $a \in \RBbb$:   
\begin{displaymath}
    \int_{a}^{1+a} \widetilde{B}_2(t) dt = \int_0^1 \widetilde{B}_2(t) dt = 0. 
\end{displaymath}
It thus remains to integrate the quadratic term. This is also addressed by Fubini and taking into account \eqref{eq:variance-continuous}. The expression \eqref{eq:difference-integrals-g-h} finally simplifies to
\begin{displaymath}
    \frac{m-1}{24}-\frac{1}{2}V_{g}\geq \frac{\lambda_{\mathrm{\min}}(g)-1}{12},
\end{displaymath}
where the last inequality follows from the Hertling conjecture for $g$.

\end{proof}
\subsubsection{} We now place ourselves in the global geometric setting of the introduction. Hence let $f\colon X\to\DBbb$ be a projective degeneration of Calabi--Yau varieties of dimension $n\geq 3$, with smooth total space $X$, such that $X_0$ is singular and has only isolated terminal singularities. We write $f_x$ for the germ of $f$ around $x \in X_0$. Then 
\begin{displaymath}  \kappa_f=\kappa_{f}^{\mathrm{loc}}=\sum_{x \in X_0^{\operatorname{sing}}} \kappa_{f_x}^{\mathrm{loc}}.
\end{displaymath}
This expression, together with our previous results about $\kappa_{f_{x}}^{\mathrm{loc}}$, can then be summarized in the following theorem.

\begin{theorem}\label{thm:conjectureAcases}
Let the assumptions be as above. Then Conjecture \ref{conjectureintro-kappa-positive} holds if the singularities are of the following form
\begin{enumerate}
    \item Suspensions of isolated plane curve singularities by an even number of squares.
    \item Doubly suspended singularities of quasi-homogeneous singularities. 
    \item Quasi-homogeneous singularities with $\lambda_{\mathrm{\min}}>3/2$.
    \item Rationally suspended quasi-homogeneous singularities.
\end{enumerate}
In particular, Conjecture \ref{conjectureintro-kappa-positive} holds for ADE singularities. 
\end{theorem}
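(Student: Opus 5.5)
The plan is to reduce everything to the local positivity statements already proved, using the decomposition $\kappa_f=\sum_x\kappa^{\mathrm{loc}}_{f_x}$ recorded just before the theorem. Since $X_0$ has only isolated terminal singularities, the global term $\frac{\chi(X_\infty)}{12}\widetilde{s}_n$ in \eqref{eq:kappafusefulforlemmaformula} vanishes. Indeed, terminal singularities are canonical, so $s_n=p_g=0$, and $\widetilde{s}_n\in[0,s_n]$ then gives $\widetilde{s}_n=0$. It therefore suffices to show $(-1)^{n+1}\kappa^{\mathrm{loc}}_{f_x}>0$ at every singular point $x$, for each of the four types. Summing these local inequalities gives $(-1)^{n+1}\kappa_f>0$, because $X_0$ is singular, so at least one summand is present. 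Theorem \ref{theorem:nosmoothfilling} then yields the absence of birational smooth fillings.

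For the individual cases I will proceed as follows.

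\begin{enumerate}
\item Case (1). Write the germ as $g+u_1^2+\cdots+u_{2k}^2$ with $g$ a plane curve singularity and $k\geq 1$; we need $k\geq1$ because $n\geq 3$. Set $g'=g+u_1^2+\cdots+u_{2k-2}^2$, so that $f_x=g'+u^2+v^2$ is doubly suspended. By Corollary \ref{cor:cases-higher-durfee}(2)(a), $g'$ satisfies the secondary higher Durfee--Saito conjectures. Proposition \ref{prop:doubly-suspended-kappa}(1) then applies.
\item Case (2). This is exactly Proposition \ref{prop:doubly-suspended-kappa}(2).
\item Case (3). This is Proposition \ref{prop:1-du-bois-kappa}(2).
\item Case (4). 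Apply the final assertion of Proposition \ref{prop:rationally-suspended}. Here I have to check that the required hypotheses hold: $g$ is quasi-homogeneous, so it satisfies the Hertling conjecture with equality; and rationality of $g$ gives $\lambda_{\min}(g)>1$, which yields the strict inequality. One point needs care: the dimension bookkeeping of that proposition (total dimension $n+m+2$) must be matched with the Calabi--Yau relative dimension, so that the sign $(-1)^{n+1}$ here agrees with $(-1)^{n+m+2}$ there.
\end{enumerate}

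For ADE singularities with $n\geq 3$, I will use the normal forms: each is $h(z_0,z_1,z_2)+z_3^2+\cdots+z_n^2$ with $h$ a quasi-homogeneous surface ADE singularity. If $n\geq 4$, or more generally whenever at least two extra squares are present, the germ is a double suspension of a quasi-homogeneous singularity, and case (2) applies. In the remaining case $n=3$, the germ is $h+z_3^2$ with one extra square. I will rewrite it as $h'+u^2+v^2$, where $h'$ is a quasi-homogeneous curve singularity, using the fact that every surface ADE singularity is itself a square suspension $h'(x,y)+z^2$. Case (2) then applies again.

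The main obstacle I expect is the bookkeeping in case (4) and the verification that every ADE normal form in relative dimension $\geq3$ contains two squares. The latter should be immediate from the normal forms $x^{k+1}+y^2+z^2+\dots$, $x^{k-1}+xy^2+z^2+\dots$, $x^3+y^4+z^2+\dots$, and so on. Everything else is a direct citation of the earlier propositions.
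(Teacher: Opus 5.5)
Your proposal is correct and is essentially the paper's own argument. The paper also uses that terminality forces $\widetilde{s}_n=0$, so that $\kappa_f=\sum_x\kappa^{\mathrm{loc}}_{f_x}$, and then combines Corollary~\ref{cor:cases-higher-durfee} with Propositions~\ref{prop:doubly-suspended-kappa}, \ref{prop:1-du-bois-kappa} and \ref{prop:rationally-suspended} case by case, exactly as you do, including your handling of ADE singularities as double suspensions of quasi-homogeneous germs and of the sign bookkeeping in case (4).
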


\subsection{The $ADE$ singularities in dimension $n$}\label{subsec:ADEsingulcomp}

In this section, we record the spectra of the classical $ADE$ singularities and the computations of the corresponding local invariants $\kappa_f^{\mathrm{loc}}$. 

Recall that the ADE singularities are for $n=2$ the canonical singularities, and for $n \geq 3$ they are terminal. They are given by the local equations
\[
\begin{array}{cr}
A_k
&
z_0^{k+1}+z_1^2+\cdots+z_n^2=0,
\\[2mm]
D_k
&
z_0^{k-1}+z_0z_1^2+z_2^2+\cdots+z_n^2=0,
\\[2mm]
E_6
&
z_0^4+z_1^3+z_2^2+\cdots+z_n^2=0,
\\[2mm]
E_7
&
z_0^3z_1+z_1^3+z_2^2+\cdots+z_n^2=0,
\\[2mm]
E_8
&
z_0^5+z_1^3+z_2^2+\cdots+z_n^2=0.
\end{array}
\]

\subsubsection{} These are summarized in a table below, together with remarks about when a Calabi--Yau family with such singularities has finite monodromy or not. This latter statement is an application of Lemma \ref{lemma:finitemonodromy} and the explicit description of the spectrum which follows from the description of the spectrum of a quasi-homogeneous singularity in \eqref{eq:qhomspectrum}. The formulas below are valid for $n \geq 2$. When $n \geq 3$, the corresponding singularities are terminal. 
\medskip

\begin{center}
\renewcommand{\arraystretch}{1.5}
\setlength{\extrarowheight}{3pt}
\setlength{\tabcolsep}{2pt}
\begin{tabular}{|c|c|c|c|c|}
\hline
\multirow{2}{*}{\textbf{type}}
&
\multirow{2}{*}{\textbf{spectrum}}
&
\multicolumn{2}{c|}{$\mathbf{\kappa}_f^{\mathrm{loc}}$}
&
\multirow{2}{*}{\shortstack{\textbf{finite monodromy}\\(sufficient conditions)}}
\\
\cline{3-4}
&
&
$n$ even
&
$n$ odd
&
\\
\hline
${A_k}$
&
$\displaystyle \frac{n}{2}+\frac{i}{k+1},
\quad i=1,\ldots,k$
&
$\displaystyle -\frac{k(n-2)}{24}$
&
$\displaystyle \frac{k(n-2)}{24}+c_{A_k}$
&
$k$ or $n$ even
\\[7pt]
\hline
$D_k$
&
$\displaystyle
\frac{n}{2}+\frac{2i-1}{2(k-1)},
\quad i=1,\ldots,k-1,
\qquad \frac{n+1}{2}$
&
$\displaystyle -\frac{k(n-2)}{24}$
&
$\displaystyle \frac{k(n-2)}{24}+c_{D_k}$
&
$n$ even
\\[7pt]
\hline
${E_6}$
&
$\displaystyle
\frac{6n+j}{12},
\quad j\in\{1,4,5,7,8,11\}$
&
$\displaystyle -\frac{n-2}{4}$
&
$\displaystyle \frac{n-2}{4}+c_{E_6}$
&
yes
\\[7pt]
\hline
${E_7}$
&
$\displaystyle
\frac{9n+j}{18},
\quad j\in\{1,5,7,9,11,13,17\}$
&
$\displaystyle -\frac{7(n-2)}{24}$
&
$\displaystyle \frac{7(n-2)}{24}+c_{E_7}$
&
$n$ even
\\[7pt]
\hline
${E_8}$
&
$\displaystyle
\frac{15n+j}{30},
\quad j\in\{1,7,11,13,17,19,23,29\}$
&
$\displaystyle -\frac{n-2}{3}$
&
$\displaystyle \frac{n-2}{3}+c_{E_8}$
&
yes
\\[7pt]
\hline
\end{tabular}
\end{center}
\bigskip

Here the $c$ in the case of odd $n$ are as follows:

\bigskip
\begin{center}
\renewcommand{\arraystretch}{1.5}
\begin{tabular}{|c|c|c|c|c|c|c|c|}
\hline

& $A_k,\ k$ odd
& $A_k,\ k$ even
& $D_k,\ k$ odd
& $D_k,\ k$ even
& $E_6$
& $E_7$
& $E_8$
\\
\hline
& & & & & & &\\ [-16pt]
$c$
& $\displaystyle\frac18$
& $\displaystyle\frac{k}{8(k+1)}$
& $\displaystyle\frac18$
& $\displaystyle\frac{k}{8(k-1)}$
& $\displaystyle\frac1{12}$
& $\displaystyle\frac7{72}$
& $\displaystyle\frac1{15}$
\\[8pt]
\hline
\end{tabular}
\end{center}

\subsubsection{} We don't know if the other cases have finite monodromy or not, and it seems subtle how to lift information from the cohomology of the Milnor fiber to the general fiber. This is a genuinely global question, depending on how the local vanishing cycles map to the cohomology of the nearby fiber. The case $A_1$ when $n$ is odd is unipotent by the Picard--Lefschetz formula. For $n\geq 3$, degenerations with these singularities and finite monodromy give counterexamples to the N\'eron--Ogg--Shafarevich criterion for Calabi--Yau manifolds, as discussed in \textsection \ref{subsection:leitmotif}.

\subsection{The Cynk--van Straten degeneration}

In \cite{CynkStraten} Cynk and van Straten construct a family of Calabi--Yau 3-folds over a punctured unit disc with finite monodromy, but without any smooth filling. It was not the first example in dimension 3, but it was the first one with non-isolated singularities in the special fiber.  

\subsubsection{} We prove that this example is also covered by our formalism, by computing $\kappa_f$ in this setting. In fact, because of Theorem \ref{theorem:nosmoothfilling},  we obtain a stronger statement: the Cynk--van Straten family does not have birational smooth fillings. More precisely, we prove: 
\begin{proposition} \label{prop:cynkstratenekappa}
For the Cynk--van Straten degeneration, one has
\begin{displaymath}
    \kappa_f=\frac12.
\end{displaymath}
\end{proposition}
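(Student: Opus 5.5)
We plan to apply Theorem \ref{thm:kappaBCOVgeneral} with $n=3$; its algebraicity hypothesis holds because the Cynk--van Straten family comes from a pencil of (resolved) double octics over an algebraic base. For $n=3$ (odd, $\lfloor n/2\rfloor=1$) the theorem reads
\begin{displaymath}
\kappa_f=-\frac{1}{6}\chi(\varphi_f)+3\alpha^0-\alpha^1+\chi(F^{3}\varphi_f)-\frac{\chi(X_\infty)}{12}\alpha^{0,3}+\frac{1}{12}\int_B c_3(\Omega_X).
\end{displaymath}
So the proof reduces to computing a finite list of invariants for this specific degeneration. These are $\chi(\varphi_f)=\chi(X_\infty)-\chi(X_0)$, the spectral traces $\alpha^0,\alpha^1$ (only $\alpha^{p,q}$ on degrees where $T_s\neq 1$ contribute), $\chi(F^3\varphi_f)$, $\alpha^{0,3}$, and the divisor $B$.

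The first step is to recall the explicit geometry. The family is a one-parameter family of double covers of $\mathbb{P}^3$ branched along eight planes in which a degeneration makes planes acquire a special incidence. Its monodromy is finite of small order, and after a crepant or small resolution of the total space one obtains the degeneration with a non-reduced or non-isolated singular central fiber.

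From this description I would compute $\chi(X_\infty)$ and $\chi(X_0)$ by stratifying the branch arrangement. I would then compute the limit mixed Hodge structure on $H^3(X_\infty)$. Since the monodromy is finite, the limit is pure, and the spectral numbers are read off from the eigenvalues of $T_s$ on the graded pieces $\Gr_F^pH^3(X_\infty)$. This gives $\alpha^{p,q}$ directly from \eqref{eq:def-alphapq-lower}. By \eqref{eq:comparevanishinggeneral}, the same data also control the $\neq 1$ part of the vanishing cohomology.

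Next I would determine $\chi(F^3\varphi_f)$ from the long exact sequence \eqref{eq:mixedlongexactvanishingcycles}. For this I need $\dim F^3H^j(X_0)$ for Deligne's mixed Hodge structure on $X_0$, compared with $\dim F^3H^j(X_\infty)$. Only $j=3$ should matter, where it amounts to comparing $h^{3,0}$ of the limit with $\dim \Gr_F^3H^3(X_0)$.

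For the global terms, I would check whether $X_0$ is reduced. If it is reduced, then $B=0$. If not, I would compute $B$ as the multiple-fiber discrepancy of $f_\ast K_{X/\DBbb}$ and integrate $c_3(\Omega_X)$ over it. I would get $\alpha^{0,3}$ as $1-\mathrm{lct}(X,X_0)$, using \cite[Theorem A]{cdg} as recalled in the proof of Theorem \ref{thm:kappaBCOVgeneral}.

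The main obstacle is the Hodge-theoretic bookkeeping on the singular fiber: computing the eigenvalues of $T_s$ on each $\Gr_F^p$ of $H^3(X_\infty)$, together with the $F^3$ part of the vanishing cohomology. I would first try to obtain these from the cover structure. Concretely, after the base change that kills the monodromy, the family becomes a quotient of a smooth family by a cyclic group. The eigenspace decomposition of $H^{p,q}$ under that group then yields the spectral numbers. Once these are in hand, the remaining steps are routine arithmetic, which should yield $\kappa_f=1/2$.
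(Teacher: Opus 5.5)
Your specialization of Theorem \ref{thm:kappaBCOVgeneral} to $n=3$ is correct, and it is also the paper's starting point; the paper uses it in the equivalent form \eqref{eq:kappadim3}. But the proposal stops where the proof begins. No invariant is actually determined, and the closing ``should yield $\kappa_f=1/2$'' is not derived. The statement is a numerical identity for one specific family, so its whole content is the evaluation. That evaluation needs the geometric input from Cynk--van Straten, which your plan never invokes:
\begin{itemize}
\item $h^{1,1}(Y_t)=41$ and $h^{1,2}(Y_t)=1$ give $\chi(Y_\infty)=80$.
\item The exact sequences \eqref{eq:exactsequencesCynkStraten} give $\chi(Y_0)=86$, hence $-\frac16\chi(\varphi_f)=1$. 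They also show that $H^\bullet(X_0,\varphi_f)$ is concentrated in degree $3$ and built from $H^1(E)(-1)$ and $\QBbb(-2)^{4}$. Neither piece meets $F^3$, so $\chi(F^3\varphi_f)=0$, and the monodromy is trivial outside degree $3$, so $3\alpha^0-\alpha^1=-3\alpha^{0,3}-\alpha^{1,2}$.
\item $Y_0$ is reduced and irreducible, so $B=0$. You leave open ``non-reduced or non-isolated'', but this point is decisive.
\item The singularities of $Y_0$ are rational, so $\alpha^{0,3}=1-\mathrm{lct}(X,X_0)=0$.
\end{itemize}
These facts reduce everything to $\kappa_f=1-\alpha^{1,2}$.

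The step you identify as the main obstacle is also where your suggested method, as stated, does not work. After the base change $t=s^2$ killing the monodromy, the family is not the quotient of a smooth filling by a cyclic group; the non-existence of any smooth filling is exactly what Cynk and van Straten prove. Over the punctured disc, the deck involution maps $Y_s$ to $Y_{-s}$, so the only Hodge structure it acts on is the limit one. Decomposing that structure into eigenspaces is a restatement of the problem, not a computation. To make it effective you would need an explicit semistable model carrying the group action, which you do not provide.

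The paper instead reads the eigenspaces off the first sequence in \eqref{eq:exactsequencesCynkStraten}. That sequence is monodromy-equivariant, the image of $H^3(Y_0)$ is the invariant part, and $T$ has order $2$. So the $(-1)$-eigenspace of $H^3(Y_\infty)$ is identified with $H^1(E)(-1)$, of Hodge types $(2,1)$ and $(1,2)$. Hence $\alpha^{1,2}=\alpha^{2,1}=\frac12$, and $\kappa_f=1-\frac12=\frac12$.
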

 For the computation, we recall the construction. We denote the family in \cite{CynkStraten} by $f:Y\to\DBbb$. It arises as the restriction of an algebraic family, it has smooth total space $Y$ and smooth
Calabi--Yau fibers $Y_t$ for $t\neq 0$, satisfying
\begin{equation}\label{eq:cynkstratencohomology}
    h^{1,1}(Y_t)=41,\qquad h^{1,2}(Y_t)=1,
\end{equation}
and with special fiber $Y_0$ singular along a line $L$ (double along $L$ with four pinch points). It has rational singularities. The  blow-up $Z_0\to Y_0$ of $Y_0$ along $L$ is a smooth Calabi--Yau 3-fold.

With these definitions, the expression for $\kappa_f$ in Theorem \ref{thm:kappaBCOVgeneral} takes the form from {\cite[Theorem 7.6 (1)]{cdg2}}. More precisely, for a projective Calabi--Yau degeneration $g\colon X\to\DBbb$, denote by $D_{i}$ the irreducible components of the special fiber, and by $\widetilde{D}_{i}$ the respective normalizations. Denote also by $B$ the divisor of the evaluation map $g^\ast g_\ast K_{X/\DBbb} \to K_{X/\DBbb}$. Then
\begin{equation}\label{eq:kappadim3}
\kappa_g
=
-\frac{1}{6}\bigl(\chi(X_\infty)-\chi(X_0)\bigr)
-\left(\frac{\chi(X_\infty)}{12}
+3\right)\alpha^{0,3}+\alpha^{1,1}-\alpha^{1,2}
-\sum_i \chi\left(\mathcal O_{\widetilde{D}_{i}}\right)
+\frac{1}{12}\int_B c_3(\Omega_X).
\end{equation}

\subsubsection{}
With this in mind, the computation of $\kappa_f$ in \eqref{eq:kappadim3} follows directly from the below considerations. 

\begin{itemize}
\item \emph{The Euler characteristics.}
Since $Y_\infty$ is Calabi--Yau, it follows from \eqref{eq:cynkstratencohomology} that
\begin{displaymath}
    \chi(Y_\infty)=2\bigl(h^{1,1}(Y_\infty)-h^{1,2}(Y_\infty)\bigr)=2(41-1)=80.
\end{displaymath}
Cynk--van Straten compute that $H^k(Y_0,\QBbb)\cong H^k(Y_\infty,\QBbb)$ for $k\neq 3,4$ and show that
there are exact sequences, where $E$ denotes a certain elliptic curve lying above $L$,
\begin{equation}\label{eq:exactsequencesCynkStraten}
\begin{split}
& 0\to H^3(Y_0)\to H^3(Y_\infty)\to H^1(E)(-1)\to 0, \\
& 0\to \bigoplus_{p\in\Sigma}\QBbb(-2)_p \to H^4(Y_0)\to H^4(Y_\infty)\to 0,
\end{split}
\end{equation}
with $|\Sigma|=4$.
From this, one then computes that $\chi(Y_0)=86$, so that $$-\frac16\bigl(\chi(Y_\infty)-\chi(Y_0)\bigr)
=-\frac16(80-86)=1.$$

\medskip\noindent
\item \emph{The terms $\chi(\mathcal O_{\widetilde{D}})$and $B$.}
Here $Y_0$ is irreducible and reduced, so $B=0$, cf. \cite[Section 2.1]{cdg}. Since $Y_0$ has rational singularities, we have $\chi(\Ocal_{Z_0})=\chi(\Ocal_{Y_0})=\chi(\Ocal_{Y_\infty})=0$.

\medskip\noindent
\item \emph{The monodromy terms.}
Since $Y_{0}$ has rational singularities, we have $\alpha^{0,3}=0$ by \cite[Theorem A]{cdg}. Since $H^{2}(Y_{\infty})\simeq H^{2}(Y_{0})$, we deduce $\alpha^{1,1}=0$.

The sequences \eqref{eq:exactsequencesCynkStraten} are coming from a sequence of the type \eqref{eq:mixedlongexactvanishingcycles}, and hence compatible with the monodromy action, which has order $2$ on $H^3(Y_\infty)$. We hence have, with self-explanatory notation: 
\begin{displaymath}
 \alpha^{p,3-p}= \alpha^{p-1, 2-p}_E.
\end{displaymath}
It also follows from \eqref{eq:exactsequencesCynkStraten} that the part where the monodromy acts as $-1$ is given by $H^1(E)(-1)$ and hence $\alpha^{0,1}_E = \alpha^{1,0}_E=1/2.$
We conclude that
\begin{displaymath}
\alpha^{1,2} = \alpha^{2,1}=\frac{1}{2}.
\end{displaymath}
\end{itemize}

\subsection{An example of Fang, Lu and Yoshikawa}\label{subsec:example-FLY}

Let $S$ be an Enriques surface, with universal double covering $X \to S$ with $X$ a K3 surface. Let $\iota$ be the corresponding involution on $X$. If $E$ is a fixed additional elliptic curve, we can consider $X\times E$ modulo the diagonal action $(\iota, -1)$: 
\begin{equation}\label{eq:quotientYoshikawa} X_{S,E}=(X\times E)/\langle(\iota, -1)\rangle.
\end{equation}
In this subsection we show that there are natural families of this type which have finite monodromy, but do not admit any birational smooth fillings.

\subsubsection{} Since $\iota$ acts without fixed points, the quotient is smooth. Since $\iota$ acts by $-1$ on the holomorphic two-form of $X$ and $-1$ acts by $-1$ on the holomorphic one-form of $E$, the diagonal action preserves their product three-form and the quotient is a  3-fold with trivial canonical bundle. This is a Borcea--Voisin Calabi--Yau 3-fold, introduced independently in \cite{Borcea1996} and \cite{Voisin1993}, and this particular version was studied by Ferrara--Harvey--Strominger--Vafa \cite{FHSV}. By \cite[Theorem 13.3]{FLY}, there is a constant $C > 0$ such that 
\begin{equation}\label{eq:BCOV-HSFV-computation}
    \tau_{\mathrm{BCOV}}(X_{S,E})=C\|\Phi(S)\|^2 \| \Delta(E)\|^2.
\end{equation}
Here $\Phi$ is a Borcherds product, and $\Delta$ is the Ramanujan $\Delta$-function, and the norms are Petersson norms. We refer to \cite[Section 13.2]{FLY} for further details. 

\subsubsection{}  

The coarse moduli space of Enriques surfaces is $\Mcal^0= (\Omega \setminus D)/\Gamma$, where $\Omega$ is a type IV period domain, $D$ is a discriminant divisor and $\Gamma$ is a certain arithmetic group. All points in $\Mcal= \Omega/\Gamma$ correspond to K3 surfaces, possibly non-projective. The Borcherds product is defined on $\Mcal$ and vanishes exactly on the discriminant locus \cite{Borcherds-Enriques}.

\begin{proposition}\label{prop:tau-bcov-BV}
    Let $\mathcal{S}\to\DBbb^\times$ be a family of Enriques surfaces such that the moduli map extends to a map $\gamma: \DBbb \to \Mcal$ with $\gamma(0)\in D/\Gamma$. Then the corresponding family of Borcea--Voisin 3-folds satisfies 
    \begin{displaymath}
        \log \tau_{\mathrm{BCOV}}(X_{\mathcal{S}_t,E})=\kappa \log|t|^2+O(1) 
    \end{displaymath}
    with $\kappa > 0.$
\end{proposition}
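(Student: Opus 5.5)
Starting from the explicit formula \eqref{eq:BCOV-HSFV-computation} of Fang--Lu--Yoshikawa, we have
\begin{displaymath}
    \log\tau_{\mathrm{BCOV}}(X_{\mathcal{S}_t,E})=\log C+\log\|\Phi(\gamma(t))\|^2+\log\|\Delta(E)\|^2 .
\end{displaymath}
The elliptic curve $E$ is fixed, so the last term is a constant. The whole problem is therefore the asymptotics of $\log\|\Phi(\gamma(t))\|^2$ as $t\to 0$. Here $\|\Phi\|^2=|\Phi|^2\cdot(\text{power of the Petersson weight factor})$ on $\Omega$, where the weight factor is a real-analytic, positive function that depends on the period point. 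One first checks that the formula of \cite[Theorem 13.3]{FLY} holds for the fibers over $\DBbb^\times$ with the same constant $C$. This is clear because both sides are functions on the moduli space of Enriques surfaces.

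The key step is to lift $\gamma$ locally to the period domain. The domain $\Omega$ is a type IV domain, and $\Gamma$ acts properly discontinuously, with finite stabilizers at every point of $\Omega$; the point $\gamma(0)$ lies in the interior $\Mcal$, not at a cusp. After a finite base change $t\mapsto t^m$, which only rescales $\kappa$ by $m$ and so does not affect its sign, $\gamma$ lifts to a holomorphic map $\widetilde\gamma\colon\DBbb\to\Omega$ with $\widetilde\gamma(0)\in D$. On a neighborhood of $\widetilde\gamma(0)$ the weight factor is smooth and nonvanishing, so it contributes $O(1)$. The Borcherds product $\Phi$ is holomorphic on $\Omega$ and its divisor is exactly $D$, with known multiplicity; for the Borcherds--Enriques form the divisor is $D$ with multiplicity one \cite{Borcherds-Enriques}. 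Then $\Phi(\widetilde\gamma(t))=t^{k}u(t)$ with $u(0)\neq 0$, where $k\geq 1$ is the intersection multiplicity of $\widetilde\gamma(\DBbb)$ with $\mathrm{div}(\Phi)$ at $0$. This requires that $\widetilde\gamma(\DBbb)\not\subset D$, which holds because $\gamma(\DBbb^\times)\subset\Mcal^0$. Hence $\log\|\Phi(\widetilde\gamma(t))\|^2=k\log|t|^2+O(1)$, and undoing the base change gives $\kappa=k/m>0$.

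The main obstacle I expect is the orbifold and normalization bookkeeping. Near points of $D$ the stabilizer in $\Gamma$ contains reflections, so the lift exists only after a base change. One also has to confirm that the Petersson norm $\|\Phi\|$ is the $\Gamma$-invariant quantity, so that it descends to $\Mcal$ and is independent of the chosen lift. Finally, one must make sure that FLY's formula involves $\|\Phi\|^2$ to a positive power, as written, rather than a negative one. A negative power would flip the sign of $\kappa$, so I would double-check the exponent in \cite[Section 13.2]{FLY} first. The $O(1)$ remainder, rather than merely $o(\log|t|)$, comes directly from the holomorphic factorization above.
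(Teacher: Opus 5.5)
Your proposal is correct and follows the paper's proof essentially step for step. You isolate $\|\Phi\|^2$ in \eqref{eq:BCOV-HSFV-computation}, lift the moduli map to $\Omega$ after a base change $t=s^m$, and write $\|\Phi\|^2=h\,|\phi|^2$ with $h$ smooth and positive. You then factor $\phi(\widetilde\gamma(s))=s^r u(s)$ with $r\geq 1$ and $u(0)\neq 0$, using $\widetilde\gamma(s)\notin D$ for $s\neq 0$, to get $\kappa=r/m>0$. Your side remarks on the multiplicity of the Borcherds divisor and on double-checking the exponent in \cite{FLY} are not needed, since the paper simply takes \eqref{eq:BCOV-HSFV-computation} as stated.
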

\begin{proof}
The only varying factor in
\eqref{eq:BCOV-HSFV-computation} is $\|\Phi\|^2$.
Since $\Mcal$ is locally a quotient by a finite group,
after shrinking the disc and making a finite base change
$t=s^m$, the extended moduli map admits a holomorphic lift
$\widetilde{\gamma}\colon\DBbb_s\to\Omega$, with $\widetilde{\gamma}(0)\in D$.

In a local trivialization of the automorphic line bundle,
write $\phi$ for the holomorphic representative of $\Phi$.
Along the lifted period map,
\[
    \|\Phi(\widetilde{\gamma}(s))\|^2
    =h(s)|\phi(\widetilde{\gamma}(s))|^2,
\]
where $h$ is smooth and positive.
Since $\Phi$ vanishes precisely on $D$, and
$\widetilde{\gamma}(s)\notin D$ for $s\neq0$, we may write
\[
    \phi(\widetilde{\gamma}(s))=s^r u(s),
    \qquad r\geq1,\quad u(0)\neq0.
\]
Consequently,
\[
    \log\tau_{\mathrm{BCOV}}(X_{\mathcal{S}_{s^m},E})
    =r\log|s|^2+O(1)
    =\frac{r}{m}\log|t|^2+O(1),
\]
which proves the assertion.
\end{proof}
\subsubsection{}  
We provide a concrete family with finite monodromy. Specializing the statement of \cite[Theorem 2.6]{Yoshikawak31} to the Enriques case, one finds that a map $\DBbb \to \Mcal$ intersecting the discriminant locus transversally gives rise to  a projective degeneration $\Zcal \to \DBbb$ of K3 surfaces $\Zcal_t, t \neq 0$ with fixed-point-free involution, such that $\Zcal_0$ has a unique ordinary double point, and the involution extends to the central fiber and acts without fixed points outside the singular point. This naturally provides us with a family $\mathcal{S}\to \DBbb^\times$ to which Proposition \ref{prop:tau-bcov-BV} applies. From the Picard--Lefschetz theorem, it follows that both the corresponding Enriques and Borcea--Voisin families have finite monodromy. The Borcea--Voisin family has a one-dimensional singular locus given by $(\Zcal_0^{\sing} \times E)/(\iota, -1)\simeq \PBbb^1$.

\begin{corollary}
    There are Borcea--Voisin families with finite monodromy, degenerating with a 1-dimensional singular locus, but which do not admit any birational smooth fillings.
\end{corollary}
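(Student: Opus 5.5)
The plan is to combine the family built in the preceding paragraph with Proposition \ref{prop:tau-bcov-BV} and Theorem \ref{theorem:nosmoothfilling}, via Proposition \ref{prop:criterion-smooth-filling}.

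First, fix an elliptic curve $E$ and a holomorphic map $\gamma\colon\DBbb\to\Mcal$ that meets the discriminant locus $D/\Gamma$ transversally at $\gamma(0)$ and avoids it elsewhere. Such a map exists because $D$ is a divisor in the smooth (orbifold) space $\Omega$: lift a small disc transverse to $D$ at a general point and push it down to $\Mcal$.

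By the specialization of \cite[Theorem 2.6]{Yoshikawak31} recalled above, this yields a projective degeneration $\Zcal\to\DBbb$ of K3 surfaces with fibrewise involution $\iota$. The involution is fixed-point free on $\Zcal_t$ for $t\neq0$. The central fiber $\Zcal_0$ has a single ordinary double point, and the extended involution fixes no other point.

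Form $\mathcal{X}=(\Zcal\times E)/\langle(\iota,-1)\rangle\to\DBbb$. For $t\neq 0$ its fibers are the Borcea--Voisin threefolds $X_{\mathcal{S}_t,E}$, which are projective Calabi--Yau manifolds. Its singular locus in the central fiber is $(\Zcal_0^{\sing}\times E)/(\iota,-1)\simeq\PBbb^1$, which is one-dimensional.

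For finite monodromy, the K3 family has monodromy given by a Picard--Lefschetz reflection in a vanishing $(-2)$-class. This monodromy has order $2$ on $H^2$. The cohomology of the quotient threefold is the $(\iota,-1)$-invariant part of $H^\ast(\Zcal_t)\otimes H^\ast(E)$ by the Künneth formula. Since $E$ is constant, the monodromy acts on it through the order-two action on $H^\ast(\Zcal_t)$ and is therefore finite.

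Next, Proposition \ref{prop:tau-bcov-BV} applies because $\gamma(0)\in D/\Gamma$. It gives
\[
\log\tau_{\mathrm{BCOV}}(X_{\mathcal{S}_t,E})=\kappa\log|t|^2+O(1),\qquad \kappa>0.
\]
Hence $\log\tau_{\mathrm{BCOV}}$ tends to $-\infty$ as $t\to0$ and does not extend continuously. The fibers are projective, so Proposition \ref{prop:criterion-smooth-filling} shows that no birational smooth filling exists, even after a base change.

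The main obstacle is the projectivity needed in the birational part of Proposition \ref{prop:criterion-smooth-filling}. The BV threefolds are projective because $\Zcal_t$ is projective for $t\neq 0$ and $E$ is projective. For this I would rely on the projectivity built into the degeneration of \cite[Theorem 2.6]{Yoshikawak31}, and check that a polarization on $\Zcal_t$ can be chosen $\iota$-invariant by averaging.

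A second, minor check is that Proposition \ref{prop:tau-bcov-BV} is stated for families whose moduli map is defined on all of $\DBbb$. This holds by construction, since the family was built from $\gamma$ itself.
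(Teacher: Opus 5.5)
Your proposal is correct and follows the paper's route: the family from \cite[Theorem 2.6]{Yoshikawak31}, Picard--Lefschetz for finite monodromy, Proposition \ref{prop:tau-bcov-BV} for $\kappa>0$, and then Proposition \ref{prop:criterion-smooth-filling} applied to the punctured family (Theorem \ref{theorem:nosmoothfilling} is not needed, and the quotient family does not evidently meet its smooth-total-space hypothesis). The only difference is projectivity: the paper notes that Borcea--Voisin threefolds are K\"ahler with $h^{2,0}=0$, hence projective, which is quicker than, but equivalent in effect to, your argument of descending a $(\iota,-1)$-invariant polarization from the projective $\Zcal_t\times E$ along the free quotient.
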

\begin{proof}
The claim follows from the above discussion and Proposition \ref{prop:criterion-smooth-filling}. In order to be able to apply the latter, we need to recall that the Borcea--Voisin construction yields projective Calabi--Yau 3-folds, since they are Kähler with $h^{2,0}=0$.
\end{proof}

\subsection{Brieskorn--Pham singularities}

In this section we study the case of Brieskorn--Pham singularities. Namely: 

\begin{theorem}\label{thm:localkappaBP}
Let \begin{displaymath}
    f=z_0^{a_0} + \ldots + z_n^{a_n}
\end{displaymath} be a terminal Brieskorn--Pham singularity with $n\geq 3$. Then Conjecture \hyperref[conj:localkappa]{$\mathrm{A}^{\prime}$} holds. In particular, Conjecture \ref{conjectureintro-kappa-positive} holds for Calabi--Yau degenerations with such singularities.
\end{theorem}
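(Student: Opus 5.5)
Since a Brieskorn--Pham singularity is quasi-homogeneous, the Hertling variance conjecture holds with equality, $V_f=\frac{n+1-2\lambda_{\min}}{12}$, and $\lambda_{\min}=\sum_i 1/a_i$. Substituting this into \eqref{eq:kappa-Vf-betaf} of Theorem \ref{thm:kappa-f-int-phi} reduces Conjecture \hyperref[conj:localkappa]{$\mathrm{A}^{\prime}$} to the purely arithmetic inequality
\begin{displaymath}
    \lambda_{\min}-1+6\beta_f>0,\qquad \beta_f=\frac{1}{\mu}\sum_{0<k_i<a_i}\widetilde{B}_2\Big(\sum_i \tfrac{k_i}{a_i}\Big),
\end{displaymath}
using the explicit spectrum \eqref{eq:BP-integral-point-countes}. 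For such a germ, terminality is equivalent to $\lambda_{\min}>1$, via the characterization of canonical/terminal singularities through $s_n$ and hyperplane sections used in Lemma \ref{lemma:1duboissuspended}. The range $\lambda_{\min}>3/2$ is already covered by Proposition \ref{prop:1-du-bois-kappa}. I will therefore assume $1<\sum 1/a_i\le 3/2$, where the crude bound $\beta_f\ge -1/12$ is insufficient.

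To get a better handle on $\beta_f$, I will use the Fourier expansion $\widetilde{B}_2(x)=\frac{1}{\pi^2}\sum_{m\ge1}\frac{\cos 2\pi m x}{m^2}$. This expresses $\beta_f$ as
\begin{displaymath}
    \beta_f=\frac{1}{\pi^2}\sum_{m\ge1}\frac{1}{m^2}\prod_{i=0}^n c_{a_i}(m),\qquad c_a(m)=\frac{1}{a-1}\sum_{k=1}^{a-1}e^{2\pi i mk/a}=\begin{cases}1 & a\mid m,\\ -\frac{1}{a-1} & a\nmid m,\end{cases}
\end{displaymath}
where the real part is taken (the product is in fact real by the symmetry $k\mapsto a-k$). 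The Bernoulli sum thus becomes an explicit arithmetic series in the exponents. Terms with $a_i\nmid m$ for many $i$ are damped by factors $1/(a_i-1)$. Negative contributions can only arise when an odd number of the $a_i$ fail to divide $m$, and each such term carries at least one small factor.

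The argument then has two steps. First, I will bound $|\beta_f|$ from below uniformly, using the product formula to get something like
\begin{displaymath}
    \beta_f\ge -\frac{1}{\pi^2}\sum_{m}\frac{1}{m^2}\prod_{a_i\nmid m}\frac{1}{a_i-1}.
\end{displaymath}
Grouping by the set of indices dividing $m$, this is controlled by $\zeta(2)\prod(\ldots)$. The target is a bound of the form $6\beta_f>-(\lambda_{\min}-1)$, or alternatively $\beta_f\ge0$ whenever at least four exponents are large. Second, the terminal range $1<\sum 1/a_i\le 3/2$ with $n\ge3$ forces either many exponents equal to $2$ or $3$, or exponents that are all moderately small. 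Suspending by $z^2$ shifts the spectrum by $1/2$ and changes $\beta_f$ in a controlled way, since $c_2(m)=\pm1$. I will therefore reduce to a finite list of exponent patterns, after quotienting the squares out two at a time. Exact double square suspensions are already handled by Proposition \ref{prop:doubly-suspended-kappa}(2), so at most one square remains. For the remaining patterns, the tail bound handles large exponents, and a finite computation of the exact sum handles the rest.

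I expect the main obstacle to be the near-boundary regime where $\sum 1/a_i$ is only slightly larger than $1$, for example $(2,3,7,N)$-type patterns. There $\lambda_{\min}-1$ is tiny, so $\beta_f$ must be shown to be essentially nonnegative, or at least of size $O(\lambda_{\min}-1)$. My first attempt will be to isolate the terms $m$ divisible by all but one $a_i$, which dominate the negative part, and to show that they are compensated by the positive terms $m\equiv0 \bmod \operatorname{lcm}$ together with the explicit gap $\lambda_{\min}-1\ge 1/\operatorname{lcm}(a_i)$.
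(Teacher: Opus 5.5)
\paragraph{The gap.} Your reduction via Hertling's equality to $\lambda_{\min}-1+6\beta_f>0$ is correct. So is disposing of two squares via Proposition~\ref{prop:doubly-suspended-kappa}. The gap is your claim that, for Brieskorn--Pham germs, terminality is equivalent to $\lambda_{\min}>1$. The condition $\lambda_{\min}>1$ characterizes rational, i.e.\ canonical, isolated hypersurface singularities. The strict inequality you set out to prove on the whole range $1<\sum 1/a_i\le 3/2$ is false there.

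Take $n=4$ and $(a_0,\dots,a_4)=(2,3,7,43,1805)$.
\begin{itemize}
\item The exponents are pairwise coprime, with $L=\prod a_i=1805\cdot 1806$ and $\sum 1/a_i=1+1/L$.
\item Expand $\prod_i c_{a_i}(m)$ in your own Fourier formula and use $\sum_{m\ge 1,\,A\mid m}m^{-2}=\zeta(2)/A^{2}$. This gives $\beta_f=\frac16\prod_i(-1/a_i)=-\frac{1}{6L}$, hence $\lambda_{\min}-1+6\beta_f=0$ exactly.
\item This germ is canonical but not terminal; the paper lists it among its vanishing examples.
\item It has a single square, so it is not doubly suspended. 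It lies precisely in the $(2,3,7,\dots)$ near-boundary regime you flag as the main obstacle.
\end{itemize}
So no combination of tail estimates and finite checks can establish your target. Your fallback gap $\lambda_{\min}-1\ge 1/L$, even paired with the sharpest possible bound on $|\beta_f|$, yields only a non-strict inequality.

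\paragraph{What is missing.} You need a gap that uses terminality rather than canonicity. By Schepers--Veys, a terminal Brieskorn--Pham singularity satisfies $\sum 1/a_i>1+1/L$, so $M=L(\lambda_{\min}-1)\ge 2$. It then suffices to prove $|\beta_f|\le 1/(6L)$, a bound the example above shows is sharp. The paper obtains this in two cases.
\begin{itemize}
\item If some partial sum $\sum_{i\ne j}1/a_i$ exceeds $1$, the germ is rationally suspended and Proposition~\ref{prop:rationally-suspended} applies.
\item Otherwise at most one exponent equals $2$. Proposition~\ref{proposition:combinatoriallemma} then proves $|\beta_D|\le 1/(6L(D))$ by induction on the number of exponents. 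The tools are the operators $T_{N,b}$, the Kubert identity $E_b\widetilde B_2(x)=b^{-2}\widetilde B_2(bx)$, and the commutation $T_{N_i,b_i}S_b=S_bT_{N_i,b_i/\gcd(b,b_i)}$.
\end{itemize}
Your crude lower bound by $\zeta(2)$-type sums is nowhere near this sharpness. The argument needs this exact $1/(6L)$ control, not an asymptotic estimate.
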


\subsubsection{} We first provide a preliminary discussion on terminal Brieskorn--Pham singularities and the problem to solve. 
Recall that the minimal spectral number is given, in this case, by
\begin{displaymath}
    \lambda_{\mathrm{\min}}=\sum_{i}\frac{1}{a_{i}}.
\end{displaymath}
From \cite[Section 4.1]{Weys}, we know that the terminal condition entails, but is not equivalent to, 
\begin{displaymath}
    \sum \frac{1}{a_i} > 1 + \frac{1}{L},\quad L=\mathrm{lcm}(a_{0},\ldots,a_{n}).
\end{displaymath}
Therefore, we have
\begin{equation}\label{def:quantity-M}
    M:=L(\lambda_{\mathrm{\min}}-1)\geq 2.
\end{equation}

\subsubsection{} As for the problem at hand, since the Hertling conjecture holds for Brieskorn--Pham singularities, we can write
\begin{displaymath}
    (-1)^{n+1}\frac{\kappa_{f}^{\mathrm{loc}}}{\mu}=\frac{\lambda_{\mathrm{\min}}-1+6\beta_{f}}{12}.
\end{displaymath}
Hence, after multiplication by $L$, we must prove
\begin{displaymath}
    M+6L\beta_{f}>0.
\end{displaymath}
Since $M\geq 2$, we note that this would follow if we could prove
\begin{equation}\label{eq:bound-for-beta}
    |\beta_{f}|\leq\frac{1}{6L}.
\end{equation}
The proof, presented below,  distinguishes two classes of Brieskorn--Pham singularities, according to the structure of the exponents. The first argument is of a general nature, and relies on the case of rationally suspended singularities. The argument in the second scenario resolves \eqref{eq:bound-for-beta} in the rest of the cases.

\subsubsection{} The first case assumes that there exists an $a_j$ such that $\sum_{i \neq j} \frac{1}{a_i} > 1$. In this scenario, the singularity 
\begin{displaymath}
    \sum_{i\neq j} z_i^{a_i}=0
\end{displaymath}
is rational, so the original singularity is rationally suspended. The statement then follows from Proposition \ref{prop:rationally-suspended}. 

In the second case, all such partial sums satisfy $\sum_{i\neq j} \frac{1}{a_i} \leq 1.$ An elementary observation is that at most one exponent can be 2, since the number of variables is at least 4. To treat this, we will formulate below a combinatorial proposition, Proposition \ref{proposition:combinatoriallemma}, which has as a special case \eqref{eq:bound-for-beta} whenever at most one exponent is 2.

\subsubsection{} 

In preparation for the proposition, consider a family, possibly with repeated elements, of pairs $$D=\lbrace (N_1, b_1),\ldots, (N_{r},b_{r})\rbrace$$ of positive integers, with $b_i | N_i$ and $N_i \geq 2$.  We will generalize the definition of $\beta_{f}$ to such $D$, in a way that $\beta_{f}$ corresponds to the family  with $N_{i}=b_{i}=a_{i}$.

For a tuple $(N, b)$ as above, we define an operator on 1-periodic real functions
\begin{displaymath}
    T_{N,b} = \frac{N E_b - I}{N-1},
\end{displaymath}
where $E_b$ is the averaging operator 
\begin{displaymath}
    E_b \varphi(x) = \frac{1}{b} \sum_{j=0}^{b-1} \varphi\left( x + \frac{j}{b} \right).    
\end{displaymath}
We note that for tuples $(N,b)$ and $(N',b')$, the corresponding $T$ operators commute:
\begin{displaymath}
    T_{N,b}T_{N',b'}=T_{N',b'}T_{N,b}.
\end{displaymath}

With the notation above, we define 
\begin{displaymath}
    \beta_{D}(x) = T_{N_1, b_1} \cdots T_{N_r, b_r} \widetilde{B}_{2}(x),\quad\beta_{D}=\beta_{D}(0).
\end{displaymath}
For the family $D_{f}=\lbrace (a_{0},a_{0}),\ldots, (a_{n},a_{n})\rbrace$, it is elementary to check that $\beta_{D_{f}}$ recovers $\beta_{f}$. 

\subsubsection{} The proposition takes the following form:

\begin{proposition}\label{proposition:combinatoriallemma}
    Let $D$ be a tuple as above. Then, if $(2,2)$ appears at most once in $D$, we have 
    \begin{displaymath}
        |\beta_{D}| \leq \frac{1}{6L(D)},\quad L(D)=\mathrm{lcm} (b_1, \ldots, b_r).
    \end{displaymath}
\end{proposition}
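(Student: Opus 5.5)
The plan is to compute $\beta_D$ exactly through the Fourier expansion of $\widetilde{B}_2$, on which the operators $T_{N,b}$ act diagonally, and then bound the resulting finite sum. Recall that
\begin{displaymath}
\widetilde{B}_2(x)=\frac{1}{\pi^2}\sum_{k\geq 1}\frac{\cos(2\pi k x)}{k^2}.
\end{displaymath}
The averaging operator $E_b$ sends $e^{2\pi i k x}$ to itself if $b\mid k$ and kills it otherwise. Hence $T_{N,b}$ multiplies the $k$-th mode by $1$ if $b\mid k$, and by $-1/(N-1)$ if $b\nmid k$. This gives
\begin{displaymath}
\beta_D=\frac{1}{\pi^2}\sum_{k\geq1}\frac{1}{k^2}\prod_{i=1}^{r}c_i(k),\qquad c_i(k)=-\frac{1}{N_i-1}+\frac{N_i}{N_i-1}\mathbf{1}_{b_i\mid k}.
\end{displaymath}
Expanding the product over subsets $S\subseteq\{1,\ldots,r\}$ and using $\sum_{L_S\mid k}k^{-2}=\pi^2/(6L_S^2)$, with $L_S=\mathrm{lcm}(b_i)_{i\in S}$ and $L_\emptyset=1$, yields the closed formula
\begin{displaymath}
\beta_D=\frac16\sum_{S}\ \prod_{i\in S}\frac{N_i}{N_i-1}\prod_{i\notin S}\Bigl(\frac{-1}{N_i-1}\Bigr)\frac{1}{L_S^2}.
\end{displaymath}
For $D_f$ this is a finite arithmetic expression, and the task reduces to bounding it.

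The grouping by residues is more useful than the raw subset sum. Modes with $L(D)\mid k$ contribute exactly $\frac{1}{6L(D)^2}$, which is at most $\frac{1}{12L(D)}$ because $L(D)\geq 2$. Every other mode has $b_i\nmid k$ for at least one $i$, so its coefficient is bounded by $\prod_{i:\,b_i\nmid k}(N_i-1)^{-1}$. It therefore suffices to show that the remaining part is at most $\frac{1}{12L(D)}$ in absolute value.

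I would organize this bound as an induction on $r$, adding one pair $(N,b)$ at a time. The key relation is
\begin{displaymath}
\beta_{D\cup\{(N,b)\}}(x)=\frac{N\,E_b\beta_D(x)-\beta_D(x)}{N-1}.
\end{displaymath}
In the Fourier picture, $E_b\beta_D(0)$ is the sum restricted to $k\in b\mathbb{Z}$. Rescaling $k=bm$, it becomes $b^{-2}$ times a $\beta$ of a modified family, in which each $b_i$ is replaced by $b_i/\gcd(b_i,b)$. That modified family has lcm equal to $L(D\cup\{(N,b)\})/b$.

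The induction hypothesis, applied to both the original and the rescaled family, then gives
\begin{displaymath}
|\beta_{D\cup\{(N,b)\}}|\leq\frac{N}{N-1}\cdot\frac{1}{b^2}\cdot\frac{b}{6L'}+\frac{1}{N-1}\cdot\frac{1}{6L(D)},
\end{displaymath}
where $L'=L(D\cup\{(N,b)\})$. Two inputs close this recursion. The first is $b\mid N$, which gives $N/b\geq 1$ and controls the factor $\frac{N}{(N-1)b}$. The second is that $L(D)\geq L'/b$, which controls the term coming from $\beta_D$ itself.

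The main obstacle is that this crude triangle-inequality recursion loses a constant factor exactly when $N=b=2$, because $\frac{N}{(N-1)b}=1$ there, leaving no room to absorb the second term. This is precisely where the hypothesis that $(2,2)$ occurs at most once must enter. Two copies of $(2,2)$ would give odd modes a coefficient $(-1)^2=1$ with no decay, and the bound genuinely fails.

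To handle this, I would first absorb the single possible $(2,2)$ factor explicitly. Its $T$-operator acts as $x\mapsto 2E_2-I$, which is $\varphi(x)\mapsto\varphi(x+\tfrac12)$. Applying it leaves the sup norm unchanged and merely shifts the evaluation point from $0$ to $\tfrac12$. Accordingly, the induction should be run for $\sup_x|\beta_D(x)|$, or at least for the values at $x\in\{0,\tfrac12\}$, rather than for $\beta_D(0)$ alone.

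With all remaining pairs satisfying $N\geq3$, or $N\geq 4$ when $b=2$, one gets $\frac{N}{N-1}\cdot\frac1b\leq\frac34$ except in the case $N=b=3$. In that case I expect to need the exact evaluation of the lcm-divisible modes, rather than the crude estimate, to make the constants close. Those few boundary cases would be checked directly from the closed subset formula above.
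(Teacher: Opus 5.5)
You correctly compute the Fourier action, and you recover exactly the paper's recursion $\beta_{D'}=\bigl(\frac{N}{b^2}\beta_{D^*}-\beta_D\bigr)/(N-1)$, with $D^*=\{(N_i,b_i/\gcd(b_i,b))\}$ and $L(D^*)=L(D')/b$. The gap is in how you close it.

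Bounding the two terms separately by the triangle inequality, using only $L(D)\geq L'/b$, gives $|\beta_{D'}|\leq \frac{1}{6L'}\cdot\frac{k+b}{kb-1}$ where $N=kb$. This exceeds $\frac{1}{6L'}$ whenever $k=1$, that is, for every pair with $N=b$. That is precisely the Brieskorn--Pham case, so the failure is not confined to $(2,2)$. Your claim that $\frac{N}{(N-1)b}\leq\frac34$ fails only at $N=b=3$ is also off, since that value is $\frac12$. The real culprit is the bound $\frac{1}{(N-1)\,6L(D)}$ on the second term. When $b$ is coprime to all $b_i$, it equals $\frac{b}{N-1}\cdot\frac{1}{6L'}$, which alone exceeds $\frac{1}{6L'}$ if $N=b$.

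No adjustment of constants can rescue this, because the singleton bound is sharp: $|\beta_{(b,b)}|=\frac{1}{6b}$. Take $D=\{(3,3)\}$ and append $(5,5)$. The true value is $\beta_{D'}=\frac{1}{90}=\frac{1}{6L(D')}$, while the triangle inequality only gives $\frac{1}{60}$. For the same reason your earlier reduction is false. For $D=\{(b,b)\}$, the modes with $b\nmid k$ sum to $-\frac{b+1}{6b^2}$, more than twice $\frac{1}{12b}$ in absolute value. The bound holds only through cancellation between the two groups of modes.

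The missing idea is a case split on $h=\mathrm{lcm}_i\gcd(b_i,b)$.

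If $h=1$, then $D^*=D$. The two terms are proportional and combine exactly: $\beta_{D'}=\frac{N/b^2-1}{N-1}\beta_D$. The factor has absolute value $\frac{|k-b|}{b(kb-1)}\leq\frac{1}{b}$, and $L(D')=bL(D)$, so there is no loss.

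If $h\geq 2$, use the exact identities $L(D)=hL(D^*)$ and $L(D')=\frac{b}{h}L(D)$ instead of $L(D)\geq L'/b$. The triangle inequality then gives $\frac{1}{6L'}\cdot\frac{k+b/h}{kb-1}$, and $kb-1\geq k+b/h$ holds except when $(N,b)=(2,2)$.

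That exception needs no sup-norm device. In the inductive step for $|D'|\geq 2$, commutativity of the $T_{N,b}$ lets you choose the appended pair different from $(2,2)$. Then $(2,2)$ appears only as a singleton, where $|\beta_{(2,2)}|=\frac{1}{12}$ is fine, or inside $D$ and $D^*$. Both contain it at most once, since $(N_i,b_i^*)=(2,2)$ forces $(N_i,b_i)=(2,2)$.

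Running the induction on $\sup_x|\beta_D(x)|$ would actually make things worse. At a general point $x$ the recursion compares $\beta_{D^*}(bx)$ with $\beta_D(x)$, so even in the coprime case the two terms no longer combine into a multiple of a single quantity.
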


\begin{proof}
    We perform an induction argument on $r$, with the auxiliary definition  \begin{equation}\label{eq:betaempty}\beta_\emptyset=\widetilde{B}_2(0)=\frac{1}{6}
    \end{equation} when $r=0$.

    Let $(N,b)$ be a new pair, and let $D'$ be obtained by appending $(N,b)$ to the family $D$. 
    By definition, 
    \begin{equation}\label{eq:betainitial}
        \beta_{D'}(x) = T_{N_1, b_1} \ldots T_{N_r, b_r} T_{N,b} \widetilde{B}_{2} (x) = \frac{N T_{N_1, b_1} \ldots T_{N_r, b_r} E_b \widetilde{B}_2(x) - \beta_{D}(x)}{N-1}.
    \end{equation}
    By the Kubert identity/duplication formula \cite[Example 2.1]{ChakrabortyKanemitsuKuzumaki2018} we have 
    $$E_b \widetilde{B}_2(x) = \frac{1}{b^2}\widetilde{B}_2(bx).$$ If we denote by $S_b$ the operator $S_b f(x) = f(bx)$ we hence find that \eqref{eq:betainitial} takes the form 
    \begin{equation}\label{eq:grosse-fraction-bernouillis}
        \beta_{D'}(x)=\frac{\frac{N}{b^2} T_{N_1, b_1} \ldots T_{N_r, b_r} S_b \widetilde{B}_2(x) - \beta_{D}(x)}{N-1}.
    \end{equation}
An elementary argument shows that on 1-periodic functions, we have the relation 
\begin{displaymath}
    T_{N_i, b_i} S_b = S_b T_{N_i, b_i^{\ast}},\quad b_{i}^\ast=b_i/\mathrm{gcd}(b, b_i).
\end{displaymath}
We deduce
\begin{displaymath}
    T_{N_1, b_1} \ldots T_{N_r, b_r} S_b \widetilde{B}_2(x) = S_b T_{N_1, b_1^\ast} \ldots T_{N_r, b_r^\ast} \widetilde{B}_2(x) = \beta_{D^\ast}(bx)
\end{displaymath} 
where $D^\ast$ consists of the $2$-tuples $\lbrace (N_1, b_1^\ast), \ldots, (N_r, b_r^\ast)\rbrace$.

In particular we find, by evaluating at $x=0$,  that
\begin{equation}\label{eq:beta'later}
    \beta_{D'} = \frac{\frac{N}{b^2} \beta_{D^\ast}-\beta_D}{N-1}.
\end{equation}

For the singleton case, take $D=\emptyset$ and $D'=\{(N,b)\}$. Then \eqref{eq:betaempty} applies, and the expression \eqref{eq:beta'later} also holds and collapses to
\begin{displaymath}
    \beta_{(N,b)}=\frac{1}{6}\frac{\frac{N}{b^2}-1}{N-1}=\frac{1}{6b}\frac{k-b}{kb-1}.
\end{displaymath}
where we decomposed $N=kb$. Note that $N\geq 2$ by assumption, and it follows that
\begin{equation}\label{eq:simple-estimate}
    |\beta_{(N,b)}|=\frac{1}{6b}\frac{|k-b|}{kb-1} \leq \frac{1}{6b}.
\end{equation}  
In this step, we allow the case $(2,2)$. Since the $T$ operators commute with each other and we allow at most one factor $(2,2)$, for the case $r\geq 2$ we may assume that $(N,b) \neq (2,2)$.

We next introduce
\begin{displaymath}
    \begin{split}
        &b_{i}=b_{i}^{\ast}h_{i},\quad h_{i}=\mathrm{gcd}(b_{i},b)\\
        &h=\mathrm{lcm}(h_{1},\ldots,h_{r}).
    \end{split}
\end{displaymath}
If $h=1$, then $D=D^\ast$ and $L(D^{\prime})=L(D)b$. In this case, \eqref{eq:beta'later} simplifies to
\begin{displaymath}
    \beta_{D'}=\frac{\frac{N}{b^2} -1}{N-1}\beta_{D}=\frac{\frac{k}{b}-1}{kb-1}\beta_{D}.
\end{displaymath}
Applying the induction assumption to $\beta_D$ and the estimate as in \eqref{eq:simple-estimate}, we obtain
\begin{displaymath}
    |\beta_{D'}|\leq\frac{1}{b}\frac{1}{6L(D)}=\frac{1}{6L(D')}.
\end{displaymath}
If $h\geq 2$, again by the induction assumption on $\beta_{D}$ and $\beta_{D^*}$, we have
\begin{equation}\label{eq:bound-beta-induction}
    |\beta_{D'}| \leq \frac{1}{6} \frac{\frac{N}{b^2L(D^*)}  +\frac{1}{L(D)}}{N-1}
\end{equation}
Combined with the identities
\begin{displaymath}
    L(D)=L(D^\ast)h,\quad L(D')=\frac{b}{h}L(D),
\end{displaymath}
equation \eqref{eq:bound-beta-induction} yields
\begin{displaymath}
    |\beta_{D'}|\leq \frac{1}{6L(D')}\frac{k+
    \frac{b}{h}}{kb-1}.
\end{displaymath}
Because $h\geq 2$, we have 
\begin{displaymath}
    kb-1  = k \cdot h \cdot \frac{b}{h} -1 \geq 2 k \cdot \frac{b}{h} -1 \geq k + \frac{b}{h}, 
\end{displaymath}
as long as $b/h \geq 2$ and $k \geq 1$. If $b=h$, the required inequality is $k(b-1)\geq 2$. This holds for $k\geq 2$, since $b=h\geq 2$, and for $k=1$ unless $b=2$. The only excluded case is thus $(N,b)=(2,2)$.
\end{proof}

\subsection{Beyond terminal singularities} 

It is natural to ask if the sign of $\kappa_{f}^{\mathrm{loc}}$ can more generally be controlled according to the singularity type. 

\subsubsection{} A refinement of Proposition \ref{proposition:combinatoriallemma} in fact gives that $(-1)^{n+1}\kappa_f^{\mathrm{loc}} \geq 0$ for a Brieskorn--Pham Du Bois singularity, and $(-1)^{n+1}\kappa_f^{\mathrm{loc}} \leq 0$ for non-Du Bois singularities. Vanishing occurs, but is rare. A necessary condition for vanishing is 
\begin{displaymath}
    \sum \frac{1}{a_i} = 1\quad\text{or}\quad 1 \pm \frac{1}{\mathrm{lcm}(a_0, \ldots, a_n)}.
\end{displaymath}
We don't provide the details of this fact.

Below we record a few examples of singularities with $\kappa_{f}^{\mathrm{loc}}=0$, found by computer. 

\begin{description}

\item[Dimension $3$]
Among Brieskorn--Pham singularities with $2\leq a_i\leq 20$, the only examples are
\begin{displaymath}
        (3,3,4,13),
        \qquad
        (3,3,5,8).
\end{displaymath}

\item[Dimension $4$]
Among exponent vectors with $2\leq a_i\leq 15$, the only examples found are
\begin{displaymath}
        (3,3,7,10,11),
        \qquad
        (3,4,4,10,15).
\end{displaymath}

Outside this range, experimentation with individual examples revealed vanishing examples at larger exponents.  For instance, the Sylvester type sequences provide examples:
\begin{displaymath}
        (2,3,7,43,1805),
        \qquad
        (2,3,7,43,1806).
\end{displaymath}

\item[Dimension $5$]
In the small ranges tested, with $2 \leq a_i \leq 15$, no additional vanishing examples were found.

\end{description}
\subsubsection{} The above computations suggest that $(-1)^{n+1} \kappa_f^{\mathrm{loc}} \geq 0$ for more general canonical or Du Bois singularities. However, for the quasi-homogeneous singularity 
\begin{displaymath}
    f=z_{0}^2 z_1 + z_1^2 z_2 + z_2^5  z_3 + z_3^{18} z_4 + z_4^{18}=0
\end{displaymath}
with weights $\left(
\frac{1927}{6480},
\frac{1313}{3240},
\frac{307}{1620},
\frac{17}{324},
\frac{1}{18}
\right)$ one computes that
\begin{displaymath}
    \kappa_f^{\mathrm{loc}} = \frac{11}{540}.
\end{displaymath}
However, the singularity is canonical and the naive expectation from above is that it should have been non-positive. There are many such examples, generated by a computer search.\bigskip

\noindent\textbf{AI statement.} Some of the ideas leading to Proposition \ref{prop:convex-order-QH} and Proposition \ref{proposition:combinatoriallemma} were suggested in discussions with ChatGPT (GPT-5.6 Sol). The arguments were subsequently developed, verified, and put into their final form by the authors. 

\bibliographystyle{amsplain}
\bibliography{Spectralgenus} 

\providecommand{\bysame}{\leavevmode\hbox to3em{\hrulefill}\thinspace}
\providecommand{\MR}{\relax\ifhmode\unskip\space\fi MR }
% \MRhref is called by the amsart/book/proc definition of \MR.
\providecommand{\MRhref}[2]{%
  \href{http://www.ams.org/mathscinet-getitem?mr=#1}{#2}
}
\providecommand{\href}[2]{#2}
\begin{thebibliography}{10}

\bibitem{Almiron-Schulze}
P.~Almir\'{o}n and M.~Schulze, \emph{Limit spectral distribution for
  non-degenerate hypersurface singularities}, C. R. Math. Acad. Sci. Paris
  \textbf{360} (2022), 699--710.

\bibitem{bcov}
M.~Bershadsky, S.~Cecotti, H.~Ooguri, and C.~Vafa, \emph{Kodaira-{S}pencer
  theory of gravity and exact results for quantum string amplitudes}, Comm.
  Math. Phys. \textbf{165} (1994), no.~2, 311--427.

\bibitem{Billingsley}
P.~Billingsley, \emph{Convergence of probability measures}, 2nd ed., Wiley Ser.
  Probab. Stat., Wiley, 1999.

\bibitem{bismutbost}
J.-M. Bismut and J.-B. Bost, \emph{Fibr\'es d\'eterminants, m\'etriques de
  {Q}uillen et d\'eg\'en\'erescence des courbes}, Acta Math. \textbf{165}
  (1990), no.~1-2, 1--103.

\bibitem{Bismut--Freed}
J.-M. Bismut and D.~Freed, \emph{The analysis of elliptic families. {I}.
  {M}etrics and connections on determinant bundles}, Comm. Math. Phys.
  \textbf{106} (1986), no.~1, 159--176.

\bibitem{BGS1}
J.-M. Bismut, H.~Gillet, and C.~Soul{\'e}, \emph{Analytic torsion and
  holomorphic determinant bundles. {I}. {B}ott-{C}hern forms and analytic
  torsion}, Comm. Math. Phys. \textbf{115} (1988), no.~1, 49--78.

\bibitem{BGS3}
\bysame, \emph{Analytic torsion and holomorphic determinant bundles. {III}.
  {Q}uillen metrics on holomorphic determinants}, Comm. Math. Phys.
  \textbf{115} (1988), no.~2, 301--351.

\bibitem{Borcea1996}
C.~Borcea, \emph{K3 surfaces with involution and mirror pairs of {C}alabi-{Y}au
  manifolds}, Mirror Symmetry II, AMS/IP Studies in Advanced Mathematics,
  vol.~1, American Mathematical Society and International Press, 1996,
  pp.~717--743.

\bibitem{Borcherds-Enriques}
R.~E. Borcherds, \emph{The moduli space of {E}nriques surfaces and the fake
  {M}onster {L}ie superalgebra}, Topology \textbf{35} (1996), no.~3, 699--710.

\bibitem{Boucksom:minimal-models-degenerations}
S.~Boucksom, \emph{Remarks on minimal models of degenerations}, Unpublished
  note, 2014.

\bibitem{Brieskorn}
E.~Brieskorn, \emph{Die {M}onodromie der isolierten {S}ingularit\"{a}ten von
  {H}yperfl\"{a}chen}, Manuscripta Math. \textbf{2} (1970), 103--161.

\bibitem{ChakrabortyKanemitsuKuzumaki2018}
K.~Chakraborty, S.~Kanemitsu, and T.~Kuzumaki, \emph{Seeing the invisible:
  Around generalized {K}ubert functions}, Annales Universitatis Scientiarum
  Budapestinensis de Rolando E{\"o}tv{\"o}s Nominatae. Sectio Computatorica
  \textbf{47} (2018), 185--195.

\bibitem{CynkStraten}
S.~Cynk and D.~van Straten, \emph{A special {C}alabi-{Y}au degeneration with
  trivial monodromy}, Commun. Contemp. Math. \textbf{24} (2022), no.~8, Paper
  No. 2150055, 15.

\bibitem{DimcaMilnorweightedhomo}
A.~Dimca, \emph{On the {M}ilnor fibrations of weighted homogeneous
  polynomials}, Compositio Math. \textbf{76} (1990), no.~1-2, 19--47.

\bibitem{Dimca:sheaves-in-topology}
\bysame, \emph{Sheaves in topology}, Universitext, Springer-Verlag, Berlin,
  2004.

\bibitem{Dirks-Mustata}
B.~Dirks and M.~Musta{\c{t}}{\u{a}}, \emph{Minimal exponents of hyperplane
  sections: a conjecture of {Teissier}}, J. Eur. Math. Soc. (JEMS) \textbf{25}
  (2023), no.~12, 4813--4840.

\bibitem{duBois}
P.~Du~Bois, \emph{Complexe de de {Rham} filtr{\'e} d'une vari{\'e}t{\'e}
  singuli{\`e}re}, Bull. Soc. Math. Fr. \textbf{109} (1981), 41--81.

\bibitem{DURF1}
D.~Eriksson and G.~Freixas~i Montplet, \emph{The spectral genus of an isolated
  hypersurface singularity and its relation to the {Milnor} number and analytic
  torsion}, Doc. Math. \textbf{31} (2026), no.~6, 1387--1419.

\bibitem{cdg}
D.~Eriksson, G.~Freixas~i Montplet, and C.~Mourougane, \emph{Singularities of
  metrics on {H}odge bundles and their topological invariants}, Algebraic
  Geometry \textbf{5} (2018), 1--34.

\bibitem{cdg2}
\bysame, \emph{B{COV} invariants of {C}alabi-{Y}au manifolds and degenerations
  of {H}odge structures}, Duke Math. J. \textbf{170} (2021), no.~3, 379--454.

\bibitem{cdg3}
\bysame, \emph{On genus one mirror symmetry in higher dimensions and the {BCOV}
  conjectures}, Forum Math. Pi \textbf{10} (2022), Paper No. e19, 53.

\bibitem{FLY}
H.~Fang, Z.~Lu, and K.-I. Yoshikawa, \emph{Analytic torsion for {C}alabi-{Y}au
  threefolds}, J. Differential Geom. \textbf{80} (2008), no.~2, 175--259.

\bibitem{FHSV}
S.~Ferrara, J.~Harvey, A.~Strominger, and C.~Vafa, \emph{Second-quantized
  mirror symmetry}, Phys. Lett. B \textbf{361} (1995), no.~1-4, 59--65.

\bibitem{Friedman:degenerating-family}
R.~Friedman, \emph{A degenerating family of quintic surfaces with trivial
  monodromy}, Duke Math. J. \textbf{50} (1983), 203--214.

\bibitem{Friedmansimthreefolddouble}
\bysame, \emph{Simultaneous resolution of threefold double points}, Math. Ann.
  \textbf{274} (1986), no.~4, 671--689.

\bibitem{Friedman-Laza-isolated}
R.~Friedman and R.~Laza, \emph{The higher {Du} {Bois} and higher rational
  properties for isolated singularities}, J. Algebr. Geom. \textbf{33} (2024),
  no.~3, 493--520.

\bibitem{Fristedt-Gray}
B.~Fristedt and L.~Gray, \emph{A modern approach to probability theory},
  Birkh{\"a}user, 1997.

\bibitem{ZhangFu-birationalBCOV}
L.~Fu and Y.~Zhang, \emph{Motivic integration and birational invariance of
  {BCOV} invariants}, Selecta Math. (N.S.) \textbf{29} (2023), no.~2, Paper No.
  25, 41.

\bibitem{Fulton}
W.~Fulton, \emph{Intersection theory}, second ed., Ergebnisse der Mathematik
  und ihrer Grenzgebiete. 3. Folge. A Series of Modern Surveys in Mathematics,
  vol.~2, Springer-Verlag, Berlin, 1998.

\bibitem{Hertling}
C.~Hertling, \emph{Frobenius manifolds and variance of the spectral numbers},
  New developments in singularity theory ({C}ambridge, 2000), NATO Sci. Ser. II
  Math. Phys. Chem., vol.~21, Kluwer Acad. Publ., Dordrecht, 2001,
  pp.~235--255.

\bibitem{Hironaka:counterexample}
H.~Hironaka, \emph{An example of a non-{K{\"a}hlerian} complex-analytic
  deformation of {K{\"a}hlerian} complex structures}, Ann. Math. (2)
  \textbf{75} (1962), 190--208.

\bibitem{HKTYci}
S.~Hosono, A.~Klemm, S.~Theisen, and S.-T. Yau, \emph{Mirror symmetry, mirror
  map and applications to complete intersection {C}alabi--{Y}au spaces},
  Nuclear Physics B \textbf{433} (1995), no.~3, 501--552.

\bibitem{HKQ}
M.~Huang, A.~Klemm, and S.~Quackenbush, \emph{Topological string theory on
  compact {C}alabi--{Y}au: Modularity and boundary conditions}, Homological
  Mirror Symmetry, Lecture Notes in Physics, vol. 757, Springer, 2009,
  pp.~45--102.

\bibitem{Kato-Saito}
K.~Kato and T.~Saito, \emph{On the conductor formula of {B}loch}, Publ. Math.
  Inst. Hautes \'Etudes Sci. (2004), no.~100, 5--151.

\bibitem{KnudsenMumford}
F.~Knudsen and D.~Mumford, \emph{The projectivity of the moduli space of stable
  curves. {I}. {P}reliminaries on ``det'' and ``{D}iv''}, Math. Scand.
  \textbf{39} (1976), no.~1, 19--55.

\bibitem{Kodaira-Spencer}
K.~Kodaira and D.~C. Spencer, \emph{On deformations of complex analytic
  structures. {III}. {S}tability theorems for complex structures}, Ann. of
  Math. (2) \textbf{71} (1960), 43--76.

\bibitem{Kollar2}
J.~Koll{\'a}r, \emph{Higher direct images of dualizing sheaves. {II}}, Ann. of
  Math. (2) \textbf{124} (1986), no.~1, 171--202.

\bibitem{kollar:singpairs}
\bysame, \emph{Singularities of pairs}, Algebraic geometry---{S}anta {C}ruz
  1995, Proc. Sympos. Pure Math., vol.~62, Amer. Math. Soc., Providence, RI,
  1997, pp.~221--287.

\bibitem{Kollar:Seshadri}
\bysame, \emph{Seshadri's criterion and openness of projectivity}, Proc. Indian
  Acad. Sci., Math. Sci. \textbf{132} (2022), no.~2, 12, Id/No 40.

\bibitem{KLSV-HK}
J.~Koll{\'a}r, R.~Laza, G.~Sacc{\`a}, and C.~Voisin, \emph{Remarks on
  degenerations of hyper-{K{\"a}hler} manifolds}, Ann. Inst. Fourier
  \textbf{68} (2018), no.~7, 2837--2882.

\bibitem{NicaiseLunardon}
L.~Lunardon and J.~Nicaise, \emph{Birational invariance of motivic zeta
  functions of {$K$}-trivial varieties, and obstructions to smooth fillings},
  Preprint, {arXiv}:2401.17772 [math.{AG}], 2024.

\bibitem{FreeMori}
A.~Moriwaki, \emph{Torsion freeness of higher direct images of canonical
  bundles}, Math. Ann. \textbf{276} (1987), no.~3, 385--398.

\bibitem{Navarro}
V.~Navarro~Aznar, \emph{Sur la th\'{e}orie de {H}odge-{D}eligne}, Invent. Math.
  \textbf{90} (1987), no.~1, 11--76.

\bibitem{Peters-Steenbrink}
C.~Peters and J.~Steenbrink, \emph{Mixed {H}odge structures}, Ergebnisse der
  Mathematik und ihrer Grenzgebiete. 3. Folge. A Series of Modern Surveys in
  Mathematics, vol.~52, Springer-Verlag, Berlin, 2008.

\bibitem{Eulerian}
T.~K. Petersen, \emph{Eulerian numbers}, Birkh\"auser Advanced Texts: Basler
  Lehrb\"ucher. [Birkh\"auser Advanced Texts: Basel Textbooks],
  Birkh\"auser/Springer, New York, 2015, With a foreword by Richard Stanley.

\bibitem{Prokhorov1999Complements}
Y.~Prokhorov, \emph{Lectures on complements on log surfaces}, MSJ Memoirs,
  vol.~10, Mathematical Society of Japan, Tokyo, 2001.

\bibitem{Sabbah-Schnell}
C.~{Sabbah} and C.~{Schnell}, \emph{{Degenerating complex variations of Hodge
  structure in dimension one}}, arXiv e-prints (2022), arXiv:2206.08166.

\bibitem{KSaito:distribution}
K.~Saito, \emph{The zeroes of characteristic function {$\chi_f$} for the
  exponents of a hypersurface isolated singular point}, Algebraic varieties and
  analytic varieties ({T}okyo, 1981), Adv. Stud. Pure Math., vol.~1,
  North-Holland, Amsterdam, 1983, pp.~195--217.

\bibitem{MSaitogenus}
M.~Saito, \emph{On the exponents and the geometric genus of an isolated
  hypersurface singularity}, Singularities, {P}art 2 ({A}rcata, {C}alif.,
  1981), Proc. Sympos. Pure Math., vol.~40, Amer. Math. Soc., Providence, RI,
  1983, pp.~465--472.

\bibitem{MSaito:MHP}
\bysame, \emph{Modules de {Hodge} polarisables}, Publ. Res. Inst. Math. Sci.
  \textbf{24} (1988), no.~6, 849--995.

\bibitem{MSaito:MHM}
\bysame, \emph{Mixed {Hodge} modules}, Publ. Res. Inst. Math. Sci. \textbf{26}
  (1990), no.~2, 221--333.

\bibitem{Saito:du-Bois}
\bysame, \emph{On the {Hodge} filtration of {Hodge} modules}, Mosc. Math. J.
  \textbf{9} (2009), no.~1, 151--181.

\bibitem{Weys}
J.~Schepers and W.~Veys, \emph{Stringy {$E$}-functions of hypersurfaces and of
  {B}rieskorn singularities}, Adv. Geom. \textbf{9} (2009), no.~2, 199--217.

\bibitem{Scherk-Steenbrink}
J.~Scherk and J.~H.~M. Steenbrink, \emph{On the mixed {Hodge} structure on the
  cohomology of the {Milnor} fibre}, Math. Ann. \textbf{271} (1985), 641--665.

\bibitem{schmid}
W.~Schmid, \emph{Variation of {H}odge structure: the singularities of the
  period mapping}, Invent. Math. \textbf{22} (1973), 211--319.

\bibitem{Thom-Sebastiani}
M.~Sebastiani and R.~Thom, \emph{Un r{\'e}sultat sur la monodromie. ({A} result
  on the monodromy)}, Invent. Math. \textbf{13} (1971), 90--96.

\bibitem{Stochastic-orders}
M.~Shaked and J.~G. Shantikumar, \emph{Stochastic orders}, Springer Ser. Stat.,
  New York, NY: Springer, 2007.

\bibitem{Soule:lectures}
C.~Soul{\'e}, D.~Abramovich, J.-F. Burnol, and J.~Kramer, \emph{Lectures on
  {Arakelov} geometry}, Camb. Stud. Adv. Math., vol.~33, Cambridge: Cambridge
  University Press, 1992.

\bibitem{Steenbrink-limits}
J.~Steenbrink, \emph{Limits of {H}odge structures}, Invent. Math. \textbf{31}
  (1975/76), no.~3, 229--257.

\bibitem{Steenbrink-mixedonvanishing}
J.~H.~M. Steenbrink, \emph{Mixed {H}odge structure on the vanishing
  cohomology}, Real and complex singularities ({P}roc. {N}inth {N}ordic
  {S}ummer {S}chool/{NAVF} {S}ympos. {M}ath., {O}slo, 1976), Sijthoff and
  Noordhoff, Alphen aan den Rijn, 1977, pp.~525--563.

\bibitem{SteenbrinkMixedAssociated}
\bysame, \emph{Mixed {H}odge structures associated with isolated
  singularities}, Singularities, {P}art 2 ({A}rcata, {C}alif., 1981), Proc.
  Sympos. Pure Math., vol.~40, Amer. Math. Soc., Providence, RI, 1983,
  pp.~513--536.

\bibitem{SteenbrinkVanishingThm}
\bysame, \emph{Vanishing theorems on singular spaces}, Ast\'erisque (1985),
  no.~130, 330--341, Differential systems and singularities (Luminy, 1983).

\bibitem{Steenbrink:Du-Bois}
\bysame, \emph{Du {B}ois invariants of isolated complete intersection
  singularities}, Ann. Inst. Fourier (Grenoble) \textbf{47} (1997), no.~5,
  1367--1377.

\bibitem{TranNoeuds}
L\^e~Dung Tr\'ang, \emph{Sur les noeuds alg\'ebriques}, Compositio Math.
  \textbf{25} (1972), 281--321.

\bibitem{Varchenko-asymptotic}
A.~N. Var\v{c}enko, \emph{Asymptotic {H}odge structure on vanishing
  cohomology}, Izv. Akad. Nauk SSSR Ser. Mat. \textbf{45} (1981), no.~3,
  540--591, 688.

\bibitem{Voisin:fillings}
C.~Voisin, \emph{D{\'e}g{\'e}n{\'e}rations de {Lefschetz} et variations de
  structures de {Hodge}. ({Lefschetz} degenerations and variations of {Hodge}
  structure)}, J. Differ. Geom. \textbf{31} (1990), no.~2, 527--534.

\bibitem{Voisin1993}
\bysame, \emph{Miroirs et involutions sur les surfaces {K3}}, Journ{\'e}es de
  g{\'e}om{\'e}trie alg{\'e}brique d'Orsay - Juillet 1992, Ast{\'e}risque, no.
  218, Soci{\'e}t{\'e} math{\'e}matique de France, 1993, pp.~273--323.

\bibitem{wang}
C.-L. Wang, \emph{On the incompleteness of the {W}eil-{P}etersson metric along
  degenerations of {C}alabi-{Y}au manifolds}, Math. Res. Lett. \textbf{4}
  (1997), no.~1, 157--171.

\bibitem{Yau-Zhang}
S.~T. Yau and L.~Zhang, \emph{An upper estimate of integral points in real
  simplices with an application to singularity theory}, Math. Res. Lett.
  \textbf{13} (2006), no.~5-6, 911--921.

\bibitem{yoshikawa2}
K.-I. Yoshikawa, \emph{Smoothing of isolated hypersurface singularities and
  {Q}uillen metrics}, Asian J. Math. \textbf{2} (1998), no.~2, 325--344.

\bibitem{Yoshikawak31}
\bysame, \emph{{$K3$} surfaces with involution, equivariant analytic torsion,
  and automorphic forms on the moduli space}, Invent. Math. \textbf{156}
  (2004), no.~1, 53--117.

\bibitem{yoshikawa}
\bysame, \emph{On the singularity of {Q}uillen metrics}, Math. Ann.
  \textbf{337} (2007), no.~1, 61--89.

\bibitem{Zhang-Yuguang-Survey}
Y.~Zhang, \emph{Degeneration of {Ricci}-flat {Calabi}-{Yau} manifolds and its
  applications}, Uniformization, Riemann-Hilbert correspondence, Calabi-Yau
  manifolds and Picard-Fuchs equations. Based on the conference, Institute
  Mittag-Leffler, Stockholm, Sweden, July 13--18, 2015, Somerville, MA:
  International Press; Beijing: Higher Education Press, 2018, pp.~551--592.

\end{thebibliography}

\end{document}